\title{\vspace{-0.5cm} Lefschetz $(1,1)$-theorem in tropical geometry}
\author{\vspace{0cm} Philipp Jell, Johannes Rau, and Kristin Shaw}
\institution{Georgia Institute of Technology,
686 Cherry Street,
Atlanta, GA 30332-0160, USA}\\
\email{philipp.jell@math.gatech.edu}}\\
\institution{Universit\"at T\"ubingen,
Geschwister-Scholl-Platz, 72074 T\"ubingen, Germany}\\
\email{johannes.rau@math.uni-tuebingen.de}}\\
\institution{Department of Mathematics, University of Oslo, Box 1053, Blindern, 0316 Oslo, Norway}\\
\email{krisshaw@math.uio.no}}
\date{\vspace{-5ex}} 
\journal{\'Epijournal de G\'eom\'etrie Alg\'ebrique} 
\numberwithin{equation}{numsection}
\newcommand{\ZZ}{{\mathbb Z}}
\newcommand{\Z}{{\mathbb Z}}
\newcommand{\CC}{{\mathcal C}}
\newcommand{\RR}{{\mathbb R}}
\newcommand{\R}{{\mathbb R}}
\newcommand{\QQ}{{\mathbb Q}}
\newcommand{\TT}{{\mathbb T}}
\newcommand{\T}{{\mathbb T}}
\renewcommand{\D}{{\mathcal D}}
\newcommand{\C}{{\mathcal C}}
\newcommand{\F}{{\mathcal F}}
\newcommand{\FS}{\mathcal{F}}
\newcommand{\GS}{\mathcal{G}}
\newcommand{\DS}{\mathcal{D}}
\newcommand{\US}{\mathcal{U}}
\newcommand{\ES}{\mathcal{E}}
\newcommand{\MS}{\mathcal{M}}
\renewcommand{\div}{\mathrm{div}}
\newcommand{\Linear}{\mathbb{L}}
\newcommand{\DD}{\mathcal{D}}
\newcommand{\diff}{d}
\newcommand{\phihat}{\hat{\phi}}
\newcommand{\face}{\lhd}
\newcommand{\imap}{\iota_{\tau,\sigma}}
\newcommand{\rmap}{\rho_{\tau,\sigma}}
\newcommand{\imapdd}{\iota_{\Delta',\Delta}}
\newcommand{\BF}{{\bf F}}
\newcommand{\clX}{\overline{V}}
\newcommand{\lift}[1]{\widetilde{#1}}
\newcommand{\bdry}[1]{#1_\infty}
\newcommand{\down}[1]{#1_\leq}
\DeclareMathOperator{\Pic}{Pic}
\DeclareMathOperator{\Aff}{Aff}
\DeclareMathOperator{\cyc}{cyc}
\DeclareMathOperator{\relint}{relint}
\DeclareMathOperator{\sed}{sed}
\DeclareMathOperator{\Hom}{Hom}
\DeclareMathOperator{\St}{St}
\DeclareMathOperator{\dist}{dist}
\DeclareMathOperator{\gen}{gen}
\DeclareMathOperator{\Ker}{Ker}
\DeclareMathOperator{\ch}{ch}
\DeclareMathOperator{\ND}{D}
\DeclareMathOperator{\rank}{rank}
\DeclareMathOperator{\id}{id}
\DeclareMathOperator{\BM}{BM}
\DeclareMathOperator{\Div}{CaDiv}
\DeclareMathOperator{\HH}{H}
\DeclareMathOperator{\ZY}{Z}
\DeclareMathOperator{\cell}{cell}
\DeclareMathOperator{\MM}{M}
\newtheorem{thm}{Theorem}[section]
\newtheorem{defn}[thm]{Definition}
\newtheorem{definition}[thm]{Definition}
\newtheorem{prop}[thm]{Proposition}
\newtheorem{proposition}[thm]{Proposition}
\newtheorem{lemma}[thm]{Lemma}
\newtheorem{cor}[thm]{Corollary}
\newtheorem{corollary}[thm]{Corollary}
\newtheorem{rem}[thm]{Remark}}
\newtheorem{example}[thm]{Example}
\begin{document}


\maketitle



\begin{prelims}

\vspace{-0.55cm}

\def\abstractname{Abstract}
\abstract{For a tropical manifold  of dimension $n$ we show that  the tropical homology classes of degree $(n-1, n-1)$ 
which arise as fundamental classes of tropical cycles are precisely those in the kernel of the eigenwave map. 
To prove this we establish a tropical  version of the Lefschetz $(1, 1)$-theorem for rational  polyhedral spaces 
that relates tropical line bundles to the kernel of the wave homomorphism on cohomology. 
Our result  for tropical manifolds then follows by combining this with Poincar\'e duality for integral tropical homology.}

\keywords{Tropical geometry; algebraic and topological cycles; matroids}

\MSCclass{14T05 (primary); 52B40; 55N35; 14C25; 14C22}

\vspace{0.15cm}

\languagesection{Fran\c{c}ais}{%

\vspace{-0.05cm}
\textbf{Titre. Th\'eor\`eme de Lefschetz $(1,1)$ en g\'eom\'etrie tropicale} \commentskip \textbf{R\'esum\'e.} Pour une vari\'et\'e tropicale de dimension $n$, nous montrons que les classes d'homologie tropicale de degr\'e $(n-1,n-1)$ apparaissant comme des classes fondamentales de cycles tropicaux sont exactement celles dans le noyau de l'application d'onde propre. Pour y parvenir, nous \'etablissons une version tropicale du th\'eor\`eme de Lefschetz pour les $(1,1)$-classes dans les espaces poly\'edraux rationnels qui relie les fibr\'es en droites tropicaux au noyau du morphisme d'onde en cohomologie. Notre r\'esultat pour les vari\'et\'es tropicales s'en d\'eduit alors, en combinant cela avec la dualit\'e de Poincar\'e pour l'homologie tropicale enti\`ere.}

\end{prelims}


\newpage

\setcounter{tocdepth}{1} \tableofcontents

\section{Introduction}

The classical Lefschetz (1,1)-theorem characterises the cohomology classes of complex projective varieties 
which arise  as Chern classes of complex line bundles. 
The theorem asserts that  these classes are precisely the  integral classes in the (1,1)-part of the Hodge decomposition.
It implies the Hodge conjecture (over $\Z$) 
for  the degree $2$ cohomology classes of a complex projective variety. 
In this paper we establish analogous results for rational polyhedral and tropical spaces.

Tropical homology in the sense of Itenberg, Mikhalkin, Katzarkov, and Zharkov  was introduced as an invariant of tropical varieties 
capable of providing  Hodge theoretic information about complex projective varieties via their tropicalisations \cite{IKMZ}.
Tropical homology groups with coefficients in a ring $Q$ can be defined for any rational polyhedral space $X$, see Definition \ref{def:polyhedralspace}. 
The tropical homology  groups with $Q$ coefficients of a rational polyhedral space $X$ are denoted  by   $\HH_{p, q} (X , Q)$. 
We also consider the tropical Borel-Moore homology groups, which are denoted  by $\HH_{p, q}^{\BM}(X, Q)$. 
The corresponding Borel-Moore and usual tropical homology groups agree when $X$ is compact.

A tropical cycle, synonymously a tropical space,  is a rational polyhedral space that satisfies  a balancing condition which is ubiquitous in tropical geometry, see Definition \ref{def:cycle}. Tropical cycles are the candidates for tropicalisations of classical algebraic cycles.
To a tropical cycle  $Z$ of dimension $k$ in a rational polyhedral space $X$, we can associate a tropical homology class which we call the 
fundamental class and denote by $[Z] \in \HH_{k, k}^{\BM}(X, \Z)$.

Tropical manifolds are tropical spaces which are locally modelled on matroidal fans, see Definition \ref{def:tropicalmanifold}.
In this paper, we determine exactly which tropical homology classes in $\HH^{\BM}_{n-1, n-1}(X, \Z)$ 
of a tropical manifold  $X$ of dimension $n$ arise from codimension one tropical cycles. 
 In order to characterise these tropical homology classes, we make use of the  wave homomorphism 
 \begin{align*}
\phihat \colon \HH^{\BM}_{p,q}(X, \Z) \to \HH^{\BM}_{p+1,q-1}(X, \R),
\end{align*}
introduced by Mikhalkin and Zharkov  \cite{MikZhar}, 
which is defined for any rational polyhedral space $X$.  
When $X$ arises as the tropicalisation of 
a family of complex projective varieties and satisfies some additional assumptions, then the wave homomorphism is related to
 the monodromy 
operator on the mixed Hodge structure of the family \cite[Section 7]{MikZhar}. 
 Liu constructed an analogous operator on tropical Dolbeault cohomology of non-archimedean analytic spaces, 
 which he relates to the monodromy operator in the weight spectral sequence \cite{Yifeng}.

It was pointed out by Mikhalkin and Zharkov that the  fundamental class of a tropical cycle in $X$ is in the kernel of $\phihat$.

\begin{thm} \label{Theorem III intro}
For a  tropical manifold $X$  of dimension $n$ the kernel of the wave homomorphism 
\begin{align*}
\phihat \colon \HH^{\BM}_{n-1, n-1}(X,  \mathbb{Z})  \to \HH^{\BM}_{n, n-2}(X, \R)
\end{align*}
consists precisely of the fundamental classes of  tropical cycles of codimension one in $X$. 
\end{thm}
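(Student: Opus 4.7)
The plan is to reduce Theorem \ref{Theorem III intro} to a cohomological statement via Poincar\'e duality and then invoke the tropical Lefschetz $(1,1)$-theorem for rational polyhedral spaces advertised in the abstract. The easy half is already well known: if $Z$ is a codimension-one tropical cycle in $X$, the fundamental class $[Z]$ sits in the kernel of $\phihat$, since locally $[Z]$ is represented by a balanced polyhedral complex of codimension one and the wave operator applied to such a class produces precisely the balancing obstruction, which vanishes by assumption. I would first record this direction carefully, as it also fixes the sign and normalisation conventions for $\phihat$ that will be needed below.

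For the nontrivial inclusion, the key input is Poincar\'e duality for integral tropical homology of a tropical manifold $X$ of dimension $n$, which yields an isomorphism
\begin{equation*}
\PD \colon \HH^{p,q}(X, \Z) \xrightarrow{\;\sim\;} \HH^{\BM}_{n-p, n-q}(X, \Z).
\end{equation*}
Applied with $(p,q) = (1,1)$ this identifies $\HH^{\BM}_{n-1,n-1}(X,\Z)$ with $\HH^{1,1}(X,\Z)$. The crucial compatibility I would then verify is that $\phihat$ on Borel--Moore homology corresponds, under $\PD$, to a cohomological wave operator $\phi \colon \HH^{1,1}(X,\Z) \to \HH^{2,0}(X,\R)$; this follows from the fact that both are induced by the same eigenwave on the sheaves of tropical $(p,q)$-forms and that the cap product with the fundamental class commutes with this local construction up to sign.

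Having set up this dictionary, the second main ingredient is the tropical Lefschetz $(1,1)$-theorem for rational polyhedral spaces established earlier in the paper: the kernel of $\phi$ on $\HH^{1,1}(X,\Z)$ equals the image of the first Chern class map
\begin{equation*}
c_1 \colon \Pic(X) \longrightarrow \HH^{1,1}(X,\Z)
\end{equation*}
from the group of tropical line bundles. Finally, to close the circle I would show that for every tropical Cartier divisor $D$ on a tropical manifold, the image $\PD(c_1(\Oc(D)))$ equals $[\Div(D)]$, where $\Div(D)$ is the associated tropical Weil divisor; this is a local computation on stars of matroidal fans using that $c_1$ is represented by a piecewise integer affine function and that the Weil divisor is the corresponding balanced codimension-one skeleton. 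Combining these identifications yields that $\ker(\phihat)$ in $\HH^{\BM}_{n-1,n-1}(X,\Z)$ is exactly $\{[\Div(D)] : D \in \Div(X)\}$, which is precisely the set of fundamental classes of codimension-one tropical cycles.

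The main obstacle, I expect, will be step three: verifying that Poincar\'e duality intertwines the homological and cohomological wave homomorphisms on the integral level and with the right signs. The codimension-one Weil--Cartier correspondence on a tropical manifold is also delicate but should be manageable because matroidal local models admit explicit tropical rational functions for every balanced codimension-one subcomplex of their recession fans. Once these two technical checks are in place, the theorem follows formally by transporting the Lefschetz $(1,1)$-theorem from rational polyhedral cohomology to Borel--Moore homology.
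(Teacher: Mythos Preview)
Your approach matches the paper's: Poincar\'e duality (Theorem~\ref{thm:PD}), compatibility of $\phi$ and $\phihat$ under $\cap[X]$ (Lemma~\ref{lemma:wavehomomco}), the tropical Lefschetz $(1,1)$-theorem (Theorem~\ref{Theorem I intro}), and the commuting square relating $\div$, $\cyc$, $c_1$ and $\cap[X]$ (Theorem~\ref{thm:commutingdiagram}). Two small corrections: the target of the cohomological wave map is $\HH^{0,2}(X,\R)$, not $\HH^{2,0}$; and the compatibility of $\phi$ and $\phihat$ under $\PD$ that you flag as the main obstacle is in fact a one-line check from the definitions (Lemma~\ref{lemma:wavehomomco}).

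There is, however, a genuine missing step. The Lefschetz theorem gives $\ker\phi = c_1(\Pic(X))$ inside $\HH^{1,1}(X,\Z)$, but you then pass without comment to Cartier divisors, concluding $\ker(\phihat)=\{[\div(s)] : s\in\Div(X)\}$. This requires that every $L\in\Pic(X)$ admit a section, i.e.\ that $\Div(X)\to\Pic(X)$ is surjective. That is not automatic; it is Proposition~\ref{prop:sec}, and its proof occupies a nontrivial portion of the paper: one shows $\HH^1(X,\MS)=0$ for the sheaf $\MS$ of tropical rational functions (Lemma~\ref{prop:vanishingH1M}) via an explicit extension argument for piecewise $\Z$-affine functions on compact polyhedra (Lemmas~\ref{lem:piecewiseaffineonsigma} and~\ref{pc softness}). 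Your remark about matroidal local models admitting explicit rational functions for every balanced codimension-one subcomplex points toward a Weil-to-Cartier construction, but what the argument actually needs runs the other way: given a line bundle, produce a global rational section. That is where the real work lies, and it does not use the matroidal hypothesis at all---it holds for any rational polyhedral space with a polyhedral structure.
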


To prove Theorem \ref{Theorem III intro}, we first establish for rational polyhedral spaces, an analogue of the line bundle version of the Lefschetz $(1, 1)$-theorem.
To do so, we consider the sheaf $\Aff_{\Z}$ of integral affine functions. These functions 
 play the role of invertible regular functions in tropical geometry.
We also consider tropical cohomology groups $\HH^{p,q}(X, Q)$, 
which are  the cohomology groups of a  sheaf  
$\FS^p_Q$ on $X$. 
The tropical Picard group of $X$ is defined to be $\Pic(X) := \HH^{1}(X, \Aff_ \Z)$ and 
there is a Chern class map  $c_1 \colon \Pic(X) \to \HH^{1, 1}(X, \Z)$, see Definition \ref{def:Pic}.
These notions in tropical geometry have also appeared in the context of curves \cite{MikZha:Theta} and tropical complexes \cite{Dustin, DustinSurfaces}. 
Definition \ref{def:eigenwave} also describes the wave homomorphism  on tropical cohomology namely, 
$\phi \colon \HH^{p, q}(X, \Z) \to \HH^{p-1, q+1}(X, \R)$.

\begin{thm} \label{Theorem I intro}
Let $X$ be a  rational polyhedral space with polyhedral structure, then  the  image of $c_1 \colon \Pic(X) \rightarrow \HH^{1, 1}(X, \mathbb{Z})$ is equal to 
the kernel of the wave homomorphism 
$\phi \colon \HH^{1, 1}(X, \mathbb{Z})  \to \HH^{0,2}(X, \R)$. 
\end{thm}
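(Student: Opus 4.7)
The plan is to mimic the classical proof of the Lefschetz $(1,1)$-theorem via the exponential sequence. The central object will be the short exact sequence of sheaves on $X$
\[
0 \to \R \to \Aff_{\Z} \to \FS^1_{\Z} \to 0,
\]
where the first map sends a real number to the corresponding constant affine function and the second map sends an integral affine function to its ``derivative'', i.e.\ the section of $\FS^1_{\Z}$ given by its integer linear part at each point. Exactness follows from the local structure of integral affine functions: locally every section of $\FS^1_{\Z}$ arises as the derivative of some integral affine function, and the only such functions with trivial derivative are the real constants.

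Next I would pass to the long exact sequence in sheaf cohomology, obtaining
\[
\cdots \to \HH^1(X, \R) \to \HH^1(X, \Aff_{\Z}) \to \HH^1(X, \FS^1_{\Z}) \xrightarrow{\;\delta\;} \HH^2(X, \R) \to \cdots .
\]
By the definitions of the tropical Picard group and of tropical cohomology one has $\HH^1(X, \Aff_{\Z}) = \Pic(X)$ and $\HH^1(X, \FS^1_{\Z}) = \HH^{1,1}(X, \Z)$. Since $\FS^0_{\R}$ is the constant sheaf $\R$, we also have $\HH^2(X, \R) = \HH^{0,2}(X, \R)$. The map $\Pic(X) \to \HH^{1,1}(X, \Z)$ induced by passing from affine functions to their derivatives is precisely $c_1$, because a tropical line bundle is presented by affine transition functions and its Chern class is obtained by taking their derivatives cocycle-wise. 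Exactness therefore yields $\mathrm{image}(c_1) = \ker(\delta)$ immediately, and the theorem reduces to identifying $\delta$ with $\phi$ (at least up to a common kernel).

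The hard part is this last identification. Both $\delta$ and $\phi$ are maps $\HH^{1,1}(X, \Z) \to \HH^{0,2}(X, \R)$ constructed by the same sort of ``local lift then coboundary'' procedure, but in different formalisms: $\delta$ is the \v{C}ech connecting morphism attached to the exact sequence above, while $\phi$ is set up at the cellular level using the polyhedral structure in Definition \ref{def:eigenwave}. My plan is to unwind the definition of $\phi$ explicitly at the cochain level and match it against $\delta$: represent a class $\alpha \in \HH^{1,1}(X, \Z)$ by a cocycle $(\ell_{ij})$ with values in $\FS^1_{\Z}$, choose affine lifts $(\tilde \ell_{ij})$ on a sufficiently fine cover, and record the real-valued $2$-cocycle $\tilde \ell_{jk} - \tilde \ell_{ik} + \tilde \ell_{ij}$ representing $\delta(\alpha)$; then show that the cellular recipe for $\phi$ produces the same cocycle (up to signs) after passing between the \v{C}ech and cellular models of $\HH^{p,q}$. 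Verifying this compatibility, together with the sign conventions inherent in $\phi$, is the technical heart of the argument; once $\delta = \phi$ is established, the exactness of the long exact sequence finishes the proof.
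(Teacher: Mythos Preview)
Your proposal is correct and follows essentially the same route as the paper: the tropical exponential sequence $0 \to \underline{\R} \to \Aff_{\Z} \to \FS^1_{\Z} \to 0$, its long exact sequence, and the identification of the connecting map $\delta$ with the eigenwave $\phi$ (up to sign) via a \v{C}ech/simplicial comparison on the cover by open stars. The paper carries out exactly the computation you sketch in your last paragraph, normalising the affine lifts $f_{i_0\dots i_q}$ by $f_{i_0\dots i_q}(i_q)=0$ to obtain $\delta = (-1)^{q+1}\phi$.
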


To prove Theorem \ref{Theorem I intro},  we use a short exact sequence of sheaves $0 \to \R \to \Aff_{\Z} \to \F^1_{\Z} \to 0$, known as the tropical exponential sequence \cite{MikZha:Theta}. This produces a long exact sequence in cohomology: 
$$ \dots \to \Pic(X) \rightarrow \HH^{1, 1}(X, \mathbb{Z}) \to \HH^{0,2}(X, \R) \to \dots . $$
For $p = 0$, the sheaf $\mathcal{F}^0_{\R}$ is the constant sheaf $\underline{\R}$, 
so we can identify $\HH^{2}(X, \R)$ and $\HH^{0, 2}(X, \mathbb{R})$. 
In Proposition \ref{boundary and wave}, we show that the boundary map $\delta \colon  \HH^{1, q}(X, \mathbb{Z}) \to \HH^{q+1}(X, \R)$
coincides up to sign with the wave homomorphism. For $q=1$, this implies Theorem \ref{Theorem I intro}.

When $X$ is an abstract tropical space of dimension $n$, the cap product with its fundamental class provides  a map
\begin{align}\label{capmap}
\cap [X] \colon \HH^{p,q}(X, \Z) \to \HH^{\BM}_{n-p,n-q}(X, \Z).
\end{align}
This allows us to describe the kernel of the wave homomorphism on homology groups. 

\begin{thm} \label{Theorem II intro}
Let $X$ be a tropical space of dimension $n$. 
$\alpha \in \HH^{1, 1}(X; \mathbb{Z})$ is  such that $\phi(\alpha) = 0$, 
then $\alpha \cap [X] \in \HH^{\BM}_{n-1, n-1}(X, \Z)$ is the fundamental class of a codimension one tropical cycle in $X$. 
\end{thm}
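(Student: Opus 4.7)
The strategy is to reduce Theorem~\ref{Theorem II intro} to Theorem~\ref{Theorem I intro} by attaching a tropical Weil divisor $D_L$ to every tropical line bundle $L$ on $X$, and then identifying its fundamental class with the cap product $c_1(L)\cap[X]$. The first step is to invoke Theorem~\ref{Theorem I intro}: since $\phi(\alpha)=0$ and $\alpha \in \HH^{1,1}(X,\Z)$, there exists $L \in \Pic(X)$ with $c_1(L)=\alpha$. Hence it suffices to produce, for every $L \in \Pic(X)$, a codimension one tropical cycle $D_L \subset X$ satisfying $[D_L] = c_1(L) \cap [X]$ in $\HH^{\BM}_{n-1,n-1}(X,\Z)$.

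To construct $D_L$ I would represent $L$ by a \v Cech $1$-cocycle $\{f_{ij}\} \subset \Aff_\Z(U_i \cap U_j)$ on a sufficiently fine open cover $\{U_i\}$ of $X$, and then choose piecewise integer affine functions $s_i$ on $U_i$ with $s_i - s_j = f_{ij}$ on overlaps. The local tropical divisors $\div(s_i)$ agree on intersections, since integer affine functions have empty divisor, and therefore glue to a global weighted codimension one polyhedral complex $D_L$. Balancing of $D_L$ at each codimension one face then follows from the balancing of $X$ by a local combinatorial argument which, crucially, uses only the balancing condition and not the stronger matroidal local structure needed for Theorem~\ref{Theorem III intro}.

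Finally, the identity $[D_L] = c_1(L) \cap [X]$ would be verified by a \v Cech-cohomological computation, tracing both sides through the tropical exponential sequence $0 \to \R \to \Aff_\Z \to \F^1_\Z \to 0$ and using compatibility of the cap product with the connecting homomorphism $\delta$, whose identification with $\phi$ (up to sign) has already been recorded. I expect the main obstacle to be the construction of the local sections $s_i$ with the prescribed overlap behaviour and the accompanying verification that the resulting $D_L$ is balanced on a general tropical space; once these technicalities are handled, the identification of $[D_L]$ with $c_1(L) \cap [X]$ reduces to a functorial comparison at the \v Cech level, and the theorem follows.
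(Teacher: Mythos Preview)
Your proposal is correct and follows essentially the same route as the paper: invoke Theorem~\ref{Theorem I intro} to lift $\alpha$ to $L\in\Pic(X)$, produce a global rational section $s$ (your compatible family $\{s_i\}$), take its Weil divisor, and verify $\cyc(\div(s))=c_1(L)\cap[X]$. You have also correctly identified the main technical obstacle: the existence of the $s_i$ with $s_i-s_j=f_{ij}$ is exactly the statement that every line bundle admits a section, which the paper proves by showing $\HH^1(X,\MS)=0$ via a softness argument (extending piecewise-linear functions from boundaries of compact polyhedra).

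One small caveat: your sketch of the final identification $[D_L]=c_1(L)\cap[X]$ via the exponential sequence and the connecting map $\delta$ is not quite the right mechanism. The connecting map of that sequence lands in $\HH^{0,2}(X,\R)$ and has already been used (as $\phi$) to characterise the image of $c_1$; it does not directly compare a Weil-divisor class with a cap product. The paper instead carries out an explicit simplicial computation: it expresses $\cyc(\div(s))$ cell by cell in terms of the differentials $\diff s_i$, rewrites $\diff s_{i_1}=\diff f_{i_0i_1}+\diff s_{i_0}$, and observes that the $\diff f_{i_0i_1}$-terms give exactly $c_1(L)\cap[X]$ while the remaining $\diff s_{i_0}$-terms assemble into an exact boundary. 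This is a chain-level verification rather than a functorial \v Cech argument, and you should expect to do something similar.
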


To prove Theorem \ref{Theorem II intro} we first show that any element  
$L \in \Pic(X)$ has a rational  section in the sense of Definition \ref{def:section}. 
A tropical Cartier divisor is a tropical line bundle  $L \in \Pic(X)$ together with a section  $s$. 
We can then define a map  $\div \colon \Div(X) \to \ZY_{n-1}(X)$, where $\Div(X)$ is the group of Cartier divisors on $X$ and 
$\ZY_{n-1}(X)$ is the group of dimension one tropical cycles in $X$.  
We then show that the map given by capping with the fundamental class  (\ref{capmap})  is an isomorphism when $X$ is a tropical manifold. 
This extends the version of Poincar\'e duality with real coefficients  of  Smacka and the first and third authors \cite[Theorem 2]{JSS}. 
Combining this statement with Theorem \ref{Theorem II intro}, we are able to prove Theorem \ref{Theorem III intro}.

The last section presents corollaries and examples of our main theorems. In particular, we consider tropical abelian surfaces and Klein bottles with a tropical structure.   We also calculate the wave map for two combinatorial types of smooth tropical quartic  surfaces. The Picard rank  of a polyhedral space $X$ is defined  to be  the rank of $\Pic(X)$. 
We prove the following statement  regarding the Picard ranks of smooth tropical quartic surfaces. 

\begin{thm}\label{Prop Intro}
For every $1 \leq \rho \leq 19$ there exists a smooth tropical quartic surface with Picard rank $\rho$. Moreover, such surfaces can be chosen to have the same combinatorial type. 
\end{thm}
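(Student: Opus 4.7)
The plan is to apply Theorem \ref{Theorem I intro} to translate the Picard rank of a smooth tropical quartic $X$ into the rank of the lattice $\ker \phi \subseteq \HH^{1,1}(X, \Z)$, where $\phi \colon \HH^{1,1}(X, \Z) \to \HH^{0,2}(X, \R)$ is the wave homomorphism. A smooth tropical quartic surface is the tropical analogue of a complex K3, so its tropical Hodge numbers match their classical counterparts: $\HH^{1,1}(X, \Z) \cong \Z^{20}$ and $\HH^{0,2}(X, \R) \cong \R$. Fixing bases, $\phi$ is a single linear functional $\phi(x) = \sum_{i=1}^{20} a_i x_i$ with $a_i \in \R$, and an elementary lattice computation gives
\[ \rank \Pic(X) \;=\; 20 - \dim_{\Q}\, \mathrm{span}_{\Q}(a_1, \ldots, a_{20}) \]
whenever the $\Q$-span is nonzero, and $\rank \Pic(X) = 20$ otherwise. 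Thus realising Picard ranks $\rho \in \{1, \ldots, 19\}$ amounts to producing smooth tropical quartics whose wave-map coefficients span a $\Q$-subspace of $\R$ of any prescribed dimension $d = 20 - \rho \in \{1, \ldots, 19\}$.

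I would fix one of the two combinatorial types $\Sigma$ of smooth tropical quartic whose wave map is computed explicitly in the preceding section. In that computation, each coefficient $a_i$ appears as an explicit $\Z$-linear form in the metric (edge-length) parameters $\ell_1, \ldots, \ell_m$ of a quartic of type $\Sigma$. The key combinatorial step is then to verify that the $20$ linear forms $a_i(\ell_1, \ldots, \ell_m)$ span a subspace of the space of $\Z$-linear forms in $\ell_1, \ldots, \ell_m$ of dimension at least $19$; equivalently, after a base change in $\Z^{20}$ one may assume that $a_1, \ldots, a_{19}$ are $\Q$-linearly independent as linear forms in the $\ell_j$.

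Granting this, one chooses the edge lengths $\ell_j$ so that their $\Q$-linear span inside $\R$ has dimension exactly $d$: for instance, pick $d$ of them to be $\Q$-linearly independent transcendentals and take the remaining lengths to be rational multiples compatible with the balancing and lattice constraints of $\Sigma$. The specialised values of $(a_1, \ldots, a_{20})$ then span a $\Q$-subspace of $\R$ of dimension $d$, and the lattice formula above yields Picard rank $\rho = 20 - d$. Since all surfaces so produced belong to the single combinatorial type $\Sigma$, the "moreover" clause follows automatically.

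The main obstacle is the genericity statement in the second paragraph: one must inspect the explicit wave-map formula of the preceding section and confirm that the twenty linear forms $a_i$ really do attain the maximal possible rank $19$ in the space of linear forms on the metric moduli of $\Sigma$. I expect this to be a direct computation given the explicit formulas already at hand, since smooth tropical quartic surfaces of a fixed combinatorial type have an abundant set of independent metric parameters (the positions of the dual vertices of the Newton subdivision), and the wave map combines these in a sufficiently non-degenerate way. Once this rank statement is confirmed, the rest of the argument is the straightforward lattice manipulation sketched above.
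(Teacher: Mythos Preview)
Your proposal is correct and follows essentially the same strategy as the paper: reduce Picard rank to the rank of $\ker\phi$ via Theorem~\ref{Theorem I intro}, observe that for a smooth tropical quartic the wave map is a single linear functional so that $\rho = 20 - \dim_{\Q}\operatorname{span}_{\Q}(a_1,\dots,a_{20})$, fix one combinatorial type, verify that the resulting linear forms in the moduli parameters have rank~$19$, and then vary the parameters to realise any $\Q$-span of dimension $1,\dots,19$.

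Two small differences are worth noting. First, the paper parametrises the moduli by the \emph{coefficients} $c_a$ of the tropical polynomial rather than by edge lengths; for the cone-triangulation type it records the wave coefficients explicitly as $w_a = c_{a+v}-c_{a-v}$ on interior edge points and $w_a = c_{a_1}+c_{a_2}+c_{a_3}-3c_{(1,1,1)}$ at vertices, which makes the rank-$19$ check a concrete linear-algebra verification. Second, for the endgame the paper uses a cleaner density argument: once the linear map $w\colon V_{\mathrm{coef}}\to\R^{19}$ has full rank, the image of the open cone $C$ of admissible coefficients has nonempty interior, and the set $Y_r\subset\R^{19}$ of vectors whose entries span a $(20-r)$-dimensional $\Q$-space is dense, so $w(C)\cap Y_r\neq\emptyset$. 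Your ``choose $d$ transcendentals'' recipe works too, but as stated it only guarantees $\dim_{\Q}\operatorname{span}_{\Q}(a_i)\leq d$; to force equality you must also arrange that the chosen independent parameters actually hit $d$ independent coordinates in the image, which is exactly what the full-rank statement provides. The density formulation sidesteps this bookkeeping.
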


\subsection*{Acknowledgements}

We are very grateful to  Ilia Itenberg, Grigory Mikhalkin and Ilia Zharkov for useful discussions and also to 
Edvard Aksnes, Andreas Gross, Arthur Renaudineau and Yuto Yamamoto for helpful comments and 
corrections on earlier versions of this paper.
We also wish to thank an anonymous referee for their comments which helped us to improve this paper.
We would also like to thank the Max Planck Institute Leipzig for hosting us during a part of this collaboration. 
This project was started while the last two authors were visiting the University of Geneva. 
We would like to thank Grigory Mikhalkin for the kind invitation.

\section{Preliminaries}

We set $\TT = [-\infty, \infty)$ and equip this set with the topology whose basis consists of the intervals 
$[-\infty , b)$ and $(a, b)$ for $a,b \neq -\infty$. We equip $\TT^r$ with the product topology. 
The set $\TT^r$ is a stratified space. For a subset $I \subset [r]$ define 
$\R^r_I  = \{x \in \TT^r \ | \ x_i = -\infty \Leftrightarrow i \in I\}$ and 
$\T^r_I$ is the closure of $\R^r_I$ in $\TT^r$. 
We then have $\R^r_I \cong \R^{r-|I|}$ and $\TT_I^r \cong \TT^{r-|I|}$. The \emph{sedentarity} of a point  
$x \in \TT^r$ is  $\sed(x) := \{ i \in [n] \ | \ x_i = -\infty\}$. 

\subsection{Abstract polyhedral spaces and tropical varieties}

A \emph{rational polyhedron} in $\R^r$ is a subset defined by a finite system of affine (non-strict)  inequalities 
$\langle w_i, v \rangle \geq c_i$ with $c_i \in \R$ and $w_i \in \Z^r$.
A \emph{face} of a polyhedron $\sigma$ is a polyhedron which is obtained by turning some of the defining inequalities of $\sigma$ into equalities.

A \emph{rational polyhedron} in $\TT^r$ is the closure of a rational polyhedron in 
$\R^r_I \cong \R^{r - |I|} \subset \T^r$ for some $I\subset[r]$. 
A \emph{face} of a polyhedron $\sigma$ in $\T^r$ is the closure of a face of $\sigma \cap \R_J$ for some $J \subset [r]$.
A \emph{rational polyhedral complex} $\CC$ in $\TT^r$ is a finite set of polyhedra in $\TT^r$,
satisfying the following properties:
\begin{itemize}
\item[\rm (1)]
For  a polyhedron $\sigma \in \CC$, if $\tau$ is a face of $\sigma$ (denoted $\tau \prec \sigma$) we have $\tau \in \CC$. 
\item[\rm (2)]
For $\sigma, \sigma' \in \CC$, if $\tau = \sigma \cap \sigma'$ is non-empty, then $\tau$ is a face of both $\sigma $ and $\sigma'$. 
\end{itemize}
The maximal polyhedra, with respect to inclusion, are called facets.
If all facets of $\CC$ have the same dimension $n$, we say $\CC$ is of pure dimension $n$. 
The support of a polyhedral complex $\CC$  is the union of all its polyhedra and is denoted $\vert \CC \vert$. 
If $X = |\CC|$, then $X$ is called a \emph{rational polyhedral subspace} of $\T^r$ and 
$\CC$ is called a \emph{rational polyhedral structure} on $X$.

The relative interior of a polyhedron $\sigma$ in $\T^r$, denoted $\relint(\sigma)$, is defined to be the set obtained after removing
all of the proper faces of $\sigma$.
Given a polyhedral complex $\CC$ in $\T^r$, for  $\sigma \in \CC$, the closed star of $\sigma$ is
$\overline{\St}(\sigma) := \{\tau \in \CC \ | \ \exists \sigma' \in \CC \text{ such that } \tau,\sigma \subset \sigma'\}$. 
The open star $\St(\sigma)$ of $\sigma$ is the open set which is the relative interior of the support of $\overline{\St}(\sigma)$. 
Also, let $\CC_I$ denote the union of polyhedra 
$\sigma \in \CC$ for which $\relint(\sigma) \subset \R^r_I$. 
For  a rational polyhedron  $\sigma$ in $\T^r$, 
we denote  $\sigma \cap \R^r_I$  by $\sigma_I$.  

A map $f \colon M \to N$, where  $M \subset \T^m$ and $N \subset \T^n$, is an
\emph{extended affine $\Z$-linear map} if it is continuous 
and there exist $A \in \text{Mat}(n \times m, \Z)$,
$b \in \R^n$ such that $f(x) = Ax +b$ for all $x \in \R^m$.

\begin{defn}
\label{def:polyhedralspace}
A \emph{rational polyhedral space} $X$ is a paracompact, second countable Hausdorff topological space with an atlas of charts 
$(\varphi_{\alpha} \colon U_{\alpha} \rightarrow \Omega_{\alpha} \subset X_{\alpha})_{{\alpha} \in A}$ such that:
\begin{itemize}
\item[\rm (1)]
The $U_{\alpha}$ are open subsets of $X$, the $\Omega_{\alpha}$ are open subsets of rational polyhedral subspaces 
$X_{\alpha} \subset \T^{r_{\alpha}}$, and the maps 
$\varphi_{\alpha} \colon U_{\alpha} \rightarrow \Omega_{\alpha}$ are homeomorphisms for all $\alpha$; 
\item[\rm (2)]
for all $\alpha, \beta \in A$ the transition map
\begin{align*}
\varphi_{\alpha}\circ \varphi^{-1}_ \beta\colon \varphi_ \beta(U_{\alpha} \cap U_ \beta) \rightarrow {\varphi_\alpha(U_{\alpha} \cap U_ \beta)}
\end{align*}
are  extended affine $\Z$-linear maps.
\end{itemize} 
\end{defn} 

\begin{defn}\label{def:facestructure}
Let $X$ be a rational polyhedral space.
A \emph{rational polyhedral structure}
on $X$ is a finite family of closed subsets $\CC$ such that the following conditions hold:
\begin{itemize}
\item[\rm (1)]
$X = \bigcup_{\sigma \in \CC} \sigma$;
\item[\rm (2)]
for each $\sigma$ there exists a chart $\varphi_\sigma \colon U \rightarrow \Omega \subset X$
such that $\overline{\St}(\sigma) \subset U$ and $\{\varphi_\sigma(\tau) \ | \ \tau \in \overline{\St}(\sigma)\}$ is a rational polyhedral complex
in $\T^s \times \R^{r-s}$.
\end{itemize}
\end{defn}

\subsection{Multi-(co)tangent (co)sheaves} 

Let  $\C$ be  a  rational polyhedral complex in $\TT^r$. 
For a face  $\sigma \in \C_I$,  denote by $\mathbb{L} (\sigma) \subset \R^r_I$
the subspace generated by the vectors in $\R^r_I$ tangent to $\sigma$.
Set $\mathbb{L}_{\Z} (\sigma) = \mathbb{L} (\sigma) \cap \Z^r_I \subset \Z^r_I$.

\begin{defn}\label{def:multitangent}
 For $\sigma \in \CC_I$,
the  $p$-th \emph{integral multi-tangent} and \emph{integral multi-cotangent space}
 of $\CC$ at $\sigma$ are the $\Z$-modules
$${\bf F}^{\Z}_p(\sigma) =  \left(\sum \limits_{ \sigma' \in \CC_I: \sigma \prec \sigma'} \bigwedge^p \Linear(\sigma')\right) \cap \bigwedge^p \Z^r_I 
\qquad \text{and} \qquad
{\bf F}_{\Z}^p(\sigma) = \Hom({\bf F}^{\Z}_p(\sigma),\Z)$$
respectively.  
If  $\tau$ is a face of $\sigma$ there are natural  maps 
\begin{align*}
\imap \colon {\bf F}^{\Z}_p(\sigma) \to {\bf F}^{\Z}_p(\tau) \quad \text{ and } 
\quad  \rmap \colon {\bf F}_{\Z}^p(\tau) \to {\bf F}_{\Z}^p(\sigma).
\end{align*}
\end{defn}

For $Q$ any ring such that $\Z \subset Q \subset \R$ define 
${\bf F}_Q^{p}(\sigma) ={ \bf F}_{\Z}^{p}(\sigma) \otimes Q$ and ${\bf F}^Q_{p}(\sigma) ={ \bf F}^{\Z}_{p}(\sigma) \otimes Q$.
When $Q = \R$ we drop the use of the sup- and  sub-scripts on ${\bf F}_p(\sigma)$ and ${\bf F}^p(\sigma)$, respectively.

From the  
$\Z$-modules ${\bf F}_{\Z}^p(\sigma)$,  it is possible to construct a sheaf on $|\CC| \subset \T^r$
following   \cite[Section 2.3]{MikZhar}. 
 For each open set $\Omega \subset |\CC|$, consider the  poset $P(\Omega)$ 
whose elements are the connected components $\sigma$ of faces of $\CC$ intersecting with $\Omega$. 
The elements of $P(\Omega)$  are ordered by inclusion and if $\tau \prec \sigma$ recall there are maps 
$\rmap  \colon {\bf{F}}_{Q}^p(\tau) \to {\bf{F}}_{Q}^p(\sigma)$.

 \begin{defn}[\cite{MikZhar}]\label{def:sheafconst}
Let $\C$ be a rational polyhedral complex of $\T^r$. For an open set $\Omega \subset |\CC|$ define the vector space
\begin{align*}
\mathcal{F}^p_{Q}(\Omega)  := \varprojlim_{\sigma \in P(\Omega)} {\bf F}_{Q}^p( \sigma).
\end{align*}
\end{defn}

The sheaves $\F^p_Q$ are constructible and  do not depend on the polyhedral structure $\CC$ but only on the support $|\CC|$. 
For a polyhedral space $X$, the sheaves $\mathcal{F}^p$ are defined by gluing along charts. 
In fact, this definition does not require a polyhedral structure on $X$,  see \cite{JSS}.

\subsection{Tropical (co)homology}

In the following we always assume that $X$ is a rational polyhedral space 
which admits a rational polyhedral structure $\CC$.
In this case, the $\Z$-modules ${\bf F}^{\Z}_p(\sigma)$ and ${\bf F}_{\Z}^p(\sigma)$
and the maps $\imap$, $\rmap$ are well-defined for any $\tau \prec \sigma \in \CC$.

We let $\Delta_q$ denote an abstract $q$-dimensional simplex. 
Again $Q$ will be a ring satisfying  $\Z \subset Q \subset \R$.

\begin{defn}\label{def:stratsimp}
A \emph{$\CC$-stratified $q$-simplex} in $X$ is a continuous map $\delta \colon \Delta_q\to X$
such that
\begin{itemize}
\item 
for each face $\Delta' \subset \Delta_q$, we have  $\delta(\relint(\Delta')) \subset \relint(\tau)$ for some $\tau \in \CC$; 
\item 
if $\Delta_q = [0, \ldots, q]$ and $\varphi$ is a chart containing 
$\delta(\Delta_q)$, then 
\begin{align*}
\sed(\varphi(\delta(0))) \supset \sed(\varphi(\delta(1))) \supset \ldots \supset \sed(\varphi(\delta(q))).
\end{align*}
\end{itemize} 

For $\tau \in \CC$ let $C_q(\tau)$ denote the abelian group generated by stratified $q$-simplices 
$\delta \colon \Delta_q \to X$ that satisfy $\relint(\Delta_q) \subset \relint(\tau)$. 
\end{defn}

\begin{defn}\label{def:tropchainhomo}
The groups of \emph{tropical} $(p,q)$-\emph{chains} and \emph{cochains} 
 with respect to $\CC$ and with $Q$-coefficients are respectively, 
\begin{align}
  C_{p,q}(X, Q) &:= \bigoplus_{\tau \in \CC} {\bf F}_p^{Q}(\tau) \otimes_{\Z} C_q(\tau),  \\
	C^{p,q}(X, Q) &:= \Hom_Q(C_{p,q}(X, Q), Q) = \bigoplus_{\tau \in \CC} {\bf F}^p_{Q}(\tau) \otimes_{\Z} \Hom_\Z(C_q(\tau),\Z). 
\end{align}

The complexes of tropical $(p, \bullet)$-chains and cochains are respectively, 
\[(C_{p,\bullet}(X, Q), \partial) \qquad \text{ and } \qquad  (C^{p,\bullet}(X, Q), d)\]
where the $\partial$ and $d$ are the usual singular differentials composed, when necessary, with $\imap$ and $\rmap$ respectively. 

The  \emph{tropical homology} and \emph{tropical cohomology} groups with coefficients in $Q$ are respectively, 
\begin{align*}
\HH_{p,q}(X, Q) := \HH_q( C_{p,\bullet}(X, Q)) \quad \text{ and } \quad 
\HH^{p,q}(X, Q) := \HH^q( C^{p,\bullet}(X, Q)). 
\end{align*}
\end{defn}

\begin{defn}\label{Borel-Moore}
The \emph{tropical Borel-Moore} chain groups $C_{p,q}^{\BM}(X, Q)$ consist of formal infinite sums
of elements of $C_{p, q}(X, Q)$ with the condition that locally only finitely many simplices have non-zero coefficients. 

The \emph{tropical Borel-Moore homology} groups  are denoted   $\HH_{p,q}^{\BM}(X, Q)$. 
They  are the homology groups of the complex 
$(C_{p,\bullet}^{\BM}(X, Q), \partial)$.
\end{defn}

\begin{rem} \label{simplicialhomology}{\rm
  For computations, we will often use the simplicial version of the tropical  (co)homology groups defined above. 
	It is possible to construct a locally finite
	simplicial structure $\DD$ on $X$ such that all simplices are $\CC$-stratified, see  \cite[Section 2.2]{MikZhar}. 
	We call such a structure a $\CC$-stratified simplicial structure. 
	Following the standard conventions for simplicial (co)homology, we obtain
	simplicial tropical homology and cohomology  groups, as well as the Borel-Moore 
	variants respectively. 

	The equivalence of singular and simplicial homology with ${\bf F}_p^{Q}$-coefficients
	is proved in \cite[Section 2.2]{MikZhar}. 
	The argument uses cellular homology as an intermediate step, which is introduced
	in Section \ref{sec:poincare} in this work. 
	The argument, which uses barycentric subdivisions, still applies to our case thanks to 
	the fact that $\DD$ is a $\CC$-stratified simplicial structure. 
	The discussion in \cite{MikZhar} is restricted to  standard homology, but the arguments can 
	be extended to the Borel-Moore case after noting that the cellular homotopy argument still applies
	and that the cellular chain complex can still be described in terms of relative singular homology
	\[
		C_{p,q}^{\BM, \cell}(\CC, Q) = H_{p,q}^{\BM}(X^q, X^{q-1}; Q).
	\]
	Here, $X^q$ denotes the support of the $q$-skeleton of $\CC$ and $H_{p,q}(X^q, X^{q-1}; Q)$
	denotes relative homology.
	We use the identification of singular and simplicial homology throughout the rest of the text.
	We also use the same notation to denote both variants of the tropical (co)homology groups. }
\end{rem}

\subsection{The eigenwave homomorphism}

Throughout this section $X$ is a rational  polyhedral space equipped with a rational  polyhedral structure $\CC$. 
Before presenting the definition of the eigenwave homomorphism from \cite{MikZhar} we provide some notation. 
If  $\delta \colon [0, \dots, q+1] \to X$ is a $\CC$-stratified $q+1$-simplex, 
we denote the restriction of $\delta$ to the face $[0, \dots, q]$ by $\delta_{0 \dots q}$
and by $\sigma$ and $\tau$ the faces of $\CC$ containing the image of the relative interior of $[0,\dots,q+1]$ and $[0,\dots,q]$, respectively.
Moreover, the  vector   
$v_{\delta[q,q+1]} \in {\bf F}_1(\tau)$ is defined to be  the difference of the endpoints  of $\delta_{q,q+1}$ in a chart $\varphi$
containing  $\sigma$.   
More precisely, 
\begin{align}\label{eqn:wavevector}
v_{\delta[q,q+1]} := \imap(\varphi(\delta(q+1))) - \varphi(\delta(q)).
\end{align}
The vector  $v_{\delta[q,q+1]}$ is in the linear space $\mathbb{L}_{\Z}(\tau) \otimes \R$. Moreover, 
given a vector $w \in \mathbf{F}^{\Z}_{p-1}(\tau)$ we have  $w \wedge v_{\delta[q,q+1]} \in \mathbf{F}_p(\tau)$. 
\begin{defn} \label{def:eigenwave}
The eigenwave  homomorphism on singular tropical chains, 
\begin{align*}
\phihat \colon C_{p-1,q+1}(X, \Z) \to C_{p, q}(X, \R),
\end{align*}
is defined on a tropical $(p,q)$-cell $v \otimes \delta$ to be
\begin{align*}
\phihat(v \otimes \delta)  = (\imap(v) \wedge v_{\delta[q,q+1]}) \otimes \delta_{0 \dots q}.
\end{align*} 
Dually, the eigenwave  homomorphism on singular tropical cochains, 
\begin{align*}
\phi \colon C^{p,q}(X, \Z) \to C^{p-1, q+1}(X, \R),
\end{align*}
is defined on a tropical $(p,q)$-cocell $\alpha$ to be
\begin{align*}
\phi(\alpha)(v \otimes \delta)  = \alpha(\phihat(v \otimes \delta)) = 
\alpha \left( (\imap(v) \wedge v_{\delta[q,q+1]}) \otimes \delta_{0 \dots q} \right).
\end{align*} 
\end{defn}

A \ direct \ computation \ shows \ that \ these \ give \ morphisms \
$\phihat \colon C_{p-1,\bullet}(X, \Z)[1] \to C_{p,\bullet}(X, \Z)$ \ and 
$\phi \colon C^{p,\bullet}(X, \Z) \to C^{p-1, \bullet}(X, \R)[1]$.
 Therefore $\phihat$ and $\phi$ descend to maps  on homology and  cohomology, 
which we also denote by $\phihat$ and $\phi$, 
respectively.

\section{Tropical exponential sequence}\label{sec:seq}
Here we will prove Theorem \ref{Theorem I intro} using the tropical exponential sequence $(\ref{exponential sequence})$. 
Throughout this section $X$ is a rational polyhedral space with a rational polyhedral structure $\CC$.

\begin{defn} 
The sheaf of real valued functions on $X$ which are affine with integral slope in each chart is denoted by $\Aff_\Z$.  
\end{defn}

\begin{defn}
Let $x$ be a point in a polyhedral space $X$ 
and  $\varphi \colon U \to \T^r$ a chart such that $x \in U$ and $\sed(\varphi(x)) \neq \emptyset$. 
Then 
$v \in \R^r$ is a \emph{divisorial direction at $x$} 
if there exists an $x_0 \in U$ 
with $\sed \varphi (x_0) = \emptyset$ such that for all $t<0$ we have 
$x_t  = \varphi(x_0) +tv  \in \varphi(U)$ and $\lim_{t \to \infty}  \varphi(x_0) + tv   = x$.  
\end{defn}

Note that any affine function $f \in \Aff(U)$ is constant  along the divisorial directions to any $x \in U$ since the value  $f(x)$ is a real number.
Taking the differential of a real valued function  provides a surjective map 
$d \colon \Aff_\Z \to \FS_\Z^1$.  
The kernel is the sheaf of locally 
constant real functions $\underline{\R}$. 
The  \emph{tropical exponential sequence} is 
\begin{align} \label{exponential sequence}
0 \to \underline{\RR} \to \Aff_{\Z} \to \F_{\Z}^1 \to 0.
\end{align}
After passing to the long exact sequence in cohomology for all $q$ there is the  coboundary map,
\begin{align*}
\delta \colon \HH^{1,q}(X; {\mathbb{Z}}) \to \HH^{q+1}(X,\RR).
\end{align*}
Recall that $\mathcal{F}^0$ is the constant sheaf  $\underline{\R}$ for all $X$, 
therefore we identify  $\HH^q(X,\RR)$ and $\HH^{0,q}(X, \RR)$.

\begin{lemma} \label{lem acyclic}
Let $\DD$ be a $\CC$-stratified simplicial structure and
 $\mathcal{U}$ denote the cover of $X$ by the open stars of vertices of either $\CC$ or $\DD$. 
Then $\mathcal{U}$ is a Leray cover of $X$ for the sheaves $\underline{\R}$, $\Aff_\Z$ and $\FS^{p}_\Z$. 
\end{lemma}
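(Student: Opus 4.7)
The plan is first to reduce the Leray condition to acyclicity on a single family of open sets. If $v_0, \ldots, v_k$ are vertices of $\CC$ (or of $\DD$), then the intersection $\St(v_0) \cap \cdots \cap \St(v_k)$ is non-empty exactly when there exists a face containing all of them, in which case it equals $\St(\tau)$ where $\tau$ is the smallest such face. Thus it suffices to establish, for every face $\tau$, that $\HH^q(\St(\tau), \FS) = 0$ for $q \geq 1$ and $\FS \in \{\underline{\R}, \Aff_\Z, \FS^p_\Z\}$.

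For the constant sheaf $\underline{\R}$, I would exhibit an explicit strong deformation retraction of $\St(\tau)$ onto a chosen point $x_0 \in \relint(\tau)$. In each chart the retraction is a linear interpolation, except that along coordinates with value $-\infty$ one simply leaves the coordinate fixed; because any point of $\St(\tau)$ lies in a face $\sigma$ containing $\tau$, and the interpolation stays inside $\sigma$, the retraction is well-defined and continuous. Contractibility of $\St(\tau)$ together with paracompactness then gives $\HH^q(\St(\tau), \underline{\R}) = 0$ for $q > 0$.

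The main obstacle is the case of the constructible multi-cotangent sheaves $\FS^p_\Z$, where naive homotopy invariance does not apply. I would use the identification of sheaf cohomology with singular (or simplicial) tropical cohomology from Remark \ref{simplicialhomology}, so that $\HH^q(\St(\tau), \FS^p_\Z) \cong \HH^{p,q}(\St(\tau), \Z)$, computed by the complex $C^{p, \bullet}(\St(\tau), \Z)$. On this complex I would construct a contracting chain homotopy $h$ implementing the cone over $x_0 \in \relint(\tau)$: for a $\CC$-stratified $q$-simplex $\delta$ in $\St(\tau)$, define $h(\delta)$ to be the $(q+1)$-simplex obtained by prepending $x_0$ as the first vertex of $\delta$. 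This is a valid $\CC$-stratified simplex because the line segments from $x_0$ to any point of $\delta$ remain inside a face $\sigma \succ \tau$ containing $\delta$, and because $\sed(\varphi(x_0)) = \sed(\varphi(\tau))$ is maximal among sedentarities of points in $\St(\tau)$, so the sedentarity ordering required in Definition \ref{def:stratsimp} is preserved. The coefficient data transports consistently under the restriction maps $\rmap$ precisely because $\tau$ is a common face of all cells visited by the cone. Dualising, one obtains the standard cone relation $dh + hd = \id$ on positive-degree cochains, yielding acyclicity in positive degree; in degree zero the complex recovers $\mathbf{F}^p_\Z(\tau) = \varprojlim_{\sigma \succ \tau} \mathbf{F}^p_\Z(\sigma)$, which is consistent with $\tau$ being the initial element of the poset $P(\St(\tau))$ in Definition \ref{def:sheafconst}.

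For $\Aff_\Z$, the tropical exponential sequence from \eqref{exponential sequence} yields a long exact sequence in cohomology on each $\St(\tau)$:
\begin{equation*}
\cdots \to \HH^q(\St(\tau), \underline{\R}) \to \HH^q(\St(\tau), \Aff_\Z) \to \HH^q(\St(\tau), \FS^1_\Z) \to \cdots
\end{equation*}
The vanishing of the outer terms for $q > 0$ immediately forces $\HH^q(\St(\tau), \Aff_\Z) = 0$ for $q > 0$, completing the proof.
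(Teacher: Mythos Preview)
Your argument follows essentially the same route as the paper's: reduce to open stars of faces by noting that finite intersections of vertex stars are again stars, use contractibility for $\underline{\R}$, a contraction argument for $\FS^p_\Z$, and then the long exact sequence of \eqref{exponential sequence} for $\Aff_\Z$. The only substantive difference is that the paper outsources the $\FS^p_\Z$ case to \cite[Proposition~3.11]{JSS}, whereas you spell out the cone-over-$x_0$ chain homotopy explicitly; your observation that $x_0 \in \relint(\tau)$ has maximal sedentarity in $\St(\tau)$, so that prepending it respects the ordering in Definition~\ref{def:stratsimp}, is exactly what makes this work.

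One small caveat: the identification you invoke, $\HH^q(\St(\tau), \FS^p_\Z) \cong \HH^{p,q}(\St(\tau), \Z)$, is an equality of \emph{sheaf} cohomology with the singular tropical cohomology, and Remark~\ref{simplicialhomology} only addresses singular versus simplicial. The identification you need is established independently in \cite{JSS} (via a soft resolution of $\FS^p$ by sheaves of cochains), so there is no circularity, but you should cite that rather than Remark~\ref{simplicialhomology}.
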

\begin{proof}
Firstly, we show acyclicity of any open star $U$ of a face for the sheaves $\underline{\R}$, $\Aff_\Z$ and $\FS^{p}_\Z$. 
The open set $U$ is contractible, thus acyclic for $\underline{\R}$. 
Furthermore, the contraction can be chosen so that it respects the simplicial structure on $U$. 
Following the arguments in the proof of \cite[Proposition 3.11]{JSS}, we see that $U$ is acyclic for $\FS^1_\Z$. 
The long exact sequence associated to (\ref{exponential sequence}) implies that  $U$ is acyclic for $\Aff_\Z$ as well. 

The intersection of stars of vertices is the star of the minimal face containing these vertices. Therefore, all intersections of the cover are acyclic and 
 $\mathcal{U}$ is a Leray cover of $X$.
 \qed
\end{proof}

\begin{rem} \label{rem:Cech vs simplicial}{\rm
Let $\mathcal{U}$ be the open cover given by stars of vertices of a $\CC$-stratified simplicial structure $\DD$ on $X$. 
Then there is a canonical isomorphism between 
the tropical  simplicial cohomology groups with respect to $\DD$ 
and the \v{C}ech cohomology of the sheaves $\mathcal{F}^p_Q$ with respect to the cover $\mathcal{U}$.
The \v{C}ech chain group $C^q(\FS^p_Q, \mathcal{U})$ 
is canonically isomorphic to 
the group of $q$-simplicial cochains with coefficients in ${\bf F}^p_Q$, 
since $\FS^p_Q(U_{i_0,\dots,i_q}) = {\bf F}^p_Q([i_0,\dots,i_q])$ for any $q$-simplex $[i_0,\dots,i_q] \in \DD$.
Also the differential maps in both cases agree.  We also use this identification of simplicial and \v{C}ech cohomology groups throughout the following sections  without using  different notations.}
\end{rem}

\begin{proposition} \label{boundary and wave}
The coboundary map 
$\delta \colon \HH^{1,q}(X, \Z) \to \HH^{0, q+1}(X,\RR)$ 
coincides, up to sign, with the  eigenwave  homomorphism. More precisely, we have 
$
  \delta = (-1)^{q+1} \phi.
$
\end{proposition}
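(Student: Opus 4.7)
The plan is to compute $\delta$ explicitly on the \v{C}ech cochain level using the identification of Remark~\ref{rem:Cech vs simplicial}, and then match the resulting formula with Definition~\ref{def:eigenwave}. By Lemma~\ref{lem acyclic} the star cover $\U$ is a Leray cover for all three sheaves in~(\ref{exponential sequence}), so $\delta$ can be computed via \v{C}ech cohomology. Given a \v{C}ech cocycle $\alpha \in C^q(\F^1_\Z, \U)$, acyclicity of each intersection $U_{i_0,\ldots,i_q}$ for $\Aff_\Z$ together with the surjectivity of $d \colon \Aff_\Z \to \F^1_\Z$ on sections yields a lift $\tilde{\alpha} \in C^q(\Aff_\Z, \U)$ of $\alpha$. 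Let $\beta \in C^{q+1}(\Aff_\Z, \U)$ denote its \v{C}ech coboundary; since the derivative of $\beta$ equals the \v{C}ech coboundary of $\alpha$, which vanishes, $\beta$ actually lies in $C^{q+1}(\underline{\R}, \U)$ and represents $\delta[\alpha]$.

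The key step is to make a canonical choice of lift. Order the vertices of each $q$-simplex $[i_0,\ldots,i_q]$ of $\DD$ so that $\sed(\varphi(i_0)) \supset \cdots \supset \sed(\varphi(i_q))$, and define $\tilde{\alpha}([i_0,\ldots,i_q])$ to be the unique integer affine function on $U_{i_0,\ldots,i_q}$ whose differential is $\alpha([i_0,\ldots,i_q])$ and which vanishes at the most interior vertex $i_q$. With this normalization the locally constant function $\beta([i_0,\ldots,i_{q+1}])$ can be evaluated at $i_{q+1}$. For each $j \leq q$, the omitted face $[i_0,\ldots,\hat{i_j},\ldots,i_{q+1}]$ retains $i_{q+1}$ as its most interior vertex, so $\tilde\alpha([i_0,\ldots,\hat{i_j},\ldots,i_{q+1}])(i_{q+1}) = 0$ and the corresponding summands vanish. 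Only the $j=q+1$ term survives, yielding
\[
\beta([i_0,\ldots,i_{q+1}]) = (-1)^{q+1}\, \tilde{\alpha}([i_0,\ldots,i_q])(i_{q+1}) = (-1)^{q+1}\, \alpha([i_0,\ldots,i_q])\bigl(v_{[i_q,i_{q+1}]}\bigr),
\]
where the second equality uses the defining property of an integer affine function together with the formula~(\ref{eqn:wavevector}) for $v_{\delta[q,q+1]}$. Comparison with Definition~\ref{def:eigenwave} identifies the right-hand side with $(-1)^{q+1}\phi(\alpha)([i_0,\ldots,i_{q+1}])$, and this equality of \v{C}ech cocycles descends to the required identity $\delta = (-1)^{q+1}\phi$ on cohomology.

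The main subtlety is justifying the second equality in the display above when some of the vertices lie in strata of positive sedentarity. One must verify (i) that an integer affine function on $U_{i_0,\ldots,i_q}$ with a prescribed differential in $\F^1_\Z$ and prescribed value at $i_q$ exists and is unique, which follows because integer affine functions are constant along divisorial directions and $\alpha([i_0,\ldots,i_q]) \in \BF^1_\Z(\tau)$ only encodes the non-divisorial tangent directions of the face $\tau$ containing the interior of the simplex; and (ii) that the value of such a function at $i_{q+1}$ is correctly read off by pairing $\alpha$ with $v_{[i_q,i_{q+1}]}$, which is precisely the combinatorial datum that the map $\imap$ appearing in~(\ref{eqn:wavevector}) is designed to record. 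Once this sedentarity bookkeeping is in place, the sign $(-1)^{q+1}$ emerges directly from the sign of the last summand of the \v{C}ech differential, and no further work is needed.
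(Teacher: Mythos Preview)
Your argument is correct and follows essentially the same route as the paper: lift the \v{C}ech cocycle to $\Aff_\Z$ with the normalisation that the lift vanishes at the last vertex $i_q$, evaluate the \v{C}ech coboundary at $i_{q+1}$ so that only the $(q+1)$-th summand survives, and then identify the resulting value with the pairing against $v_{[i_q,i_{q+1}]}$ via constancy of integer affine functions along divisorial directions. The paper makes the sedentarity bookkeeping in step~(ii) explicit by introducing the projection $\pi_{\tau,\sigma}$ between strata and writing $f_{i_0\dots i_q}(i_{q+1}) = f_{i_0\dots i_q}(\pi_{\tau,\sigma}(i_{q+1}))$, but this is exactly the content of your final paragraph.
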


\begin{proof}
Let $\DD$ be a stratified simplicial structure on $X$. 
Let $\D_q$ denote the simplicies of $\DD$ of dimension $q$. 
Write $[i_0,\dots,i_q]$ for the $q$-simplex with vertices $i_0,\dots,i_q \in \DD$ with the orientation induced by the ordering of the vertices.  
For a $q$-simplex $[i_0,\dots,i_q]$, denote its open star by $U_{i_0\dots i_q}$. 
 
We will compare the coboundary and the eigenwave maps using  \v{C}ech cochains  with respect to the cover $(U_{i})_{i \in \D_0}$. 
An element $\alpha \in \HH^{1,q}(X, \Z)$ is given by a tuple $(\alpha_{i_0\dotsi q})_{[i_0,\dots,i_q] \in \D_{q}}$ 
where $\alpha_{i_0\dots i_q}  \in \FS^1_{\Z}(U_{i_0 \dots i_q})$. 
We choose a collection of functions  $f_{i_0 \dots i_q} \in \Aff_{\Z}(U_{i_0 \dots i_k})$ such that $d f_{i_0 \dots i_q} = 
\alpha_{i_0 \dots i_q} $ for all $[i_0,\dots,i_q] \in \DD_q$. 
Since  the functions $f_{i_0\dots i_q}$ are integer affine and the vertex $i_q$ has minimal sedentarity among all of $i_0 \dots i_q$, 
each function  $f_{i_0 \dots i_q}$ extends uniquely by continuity to the vertex $i_q$.  
We    normalise our choices  in such a way that $f_{i_0 \dots i_q}(i_q) = 0$. 

Write $f = (f_{i_0 \dots i_q})_{[i_0,\dots,i_q] \in \D_{q}}$.
Since $(\alpha_{i_0 \dots i_q})$ is a closed \v{C}ech chain, 
 the \v{C}ech boundary
\begin{align*}
(\partial f)_{i_0 \dots i_{q+1}} = \sum (-1)^k f_{i_0 \dots \widehat{i_{k}} \dots i_{q+1}}
\end{align*}
is a constant function. 
To compute this constant, we evaluate at $i_{q+1}$ and find
\begin{align*}
(\partial f)_{i_0 \dots i_{q+1}}(i_{q+1}) = (-1)^{q+1} f_{i_0 \dots i_q}(i_{q+1})
\end{align*}
because of our normalisation. 

Note that if $i_{q+1}$ has strictly lower sedentarity than $i_q$, then $f_{i_0 \dots i_q}$ is constant 
when moving along the divisorial direction at $i_{q+1}$  towards $i_q$. 
Let  $\pi_{\tau, \sigma}$ be the projection  (along the divisorial direction) between strata containing the relative interior of two faces  $\tau$ and $\sigma$. 
In particular, if the relative interiors of  $\tau$ and $\sigma$ are contained in the same strata this map is the identity. 
Then, whether or not  $i_{q} $ and $i_{q+1}$ have the same sedentarity, we have
$f_{i_0 \dots i_q}(i_{q+1}) = f_{i_0 \dots i_q}(\pi_{\tau, \sigma} (i_{q+1}))$, 
where $\tau$ and $\sigma$ are the faces containing $[i_0,\dots,i_q]$ and $[i_0,\dots,i_{q+1}]$, respectively. 
Therefore, 
\begin{align*}
\phi(\alpha)_{i_0 \dots i_{q+1}} = \alpha_{i_0 \dots i_q}(v_{\delta[q,q+1]} )  
=  \alpha_{i_0 \dots i_q}(\pi_{\tau, \sigma}(  i_{q+1}) - i_{q} ) 
=  f_{i_0 \dots i_q}(\pi_{\tau, \sigma} ( i_{q+1} ) ) = f_{i_0 \dots i_q}( i_{q+1} ) 
\end{align*}
since $f_{i_0 \dots i_q}(i_{q}) = 0$. 
This completes the proof. \qed
\end{proof}

\begin{defn} \label{def:Pic}
The \emph{tropical Picard group} is  $\Pic(X):= \HH^{1}(X, \Aff_{\Z})$. 
The map from $\Pic(X)$ to  $\HH^{1,1}(X, \Z)$ provided by the tropical exponential sequence is
called the \emph{Chern class map} and is denoted by $c_1 \colon \Pic(X) \to \HH^{1,1}(X, \Z)$.
\end{defn}

\begin{proof}[Proof of Theorem \ref{Theorem I intro}]
The kernel of the boundary map $\delta$ is $\Pic(X) = \HH^1(X, \Aff_{\Z})$ by the long exact sequence associated to (\ref{exponential sequence}) 
and by Propostion \ref{boundary and wave} this is also the kernel of the eigenwave  homomorphism. 
This completes the proof. 
\qed
\end{proof}

\begin{rem}{\rm
There is also a version of Sequence (\ref{exponential sequence}) with real coefficients namely,  
\begin{align} \label{exponential sequenceR}
  0 \to \underline{\RR} \to \Aff \to \F^1 \to 0,
\end{align}
where $\Aff$ denote the sheaf of functions which are affine in each chart, not necessarily with integral slopes.
By the same argument the boundary map of the long exact sequence is equal to the eigenwave map extended to cohomology with $\R$-coefficients 
$\phi \colon \HH^{1, q}(X, \R) \to \HH^{0,q+1}(X, \R)$. 
Mikhalkin and Zharkov  conjecture that 
$\phi^{p-q} \colon \HH^{p,q}(X, \R) \to \HH^{q,p}(X, \R)$ is an isomorphism for all $p \geq q$  \cite[Conjecture 5.3]{MikZhar}. 
This would imply that $\phi \colon \HH^{1,q}(X, \R) \to \HH^{0,q+1}(X, \R)$ is surjective for all $q$.
By the long exact sequence derived from the short exact sequence in (\ref{exponential sequenceR}) 
this happens if and only if $\HH^{0,q}(X, \R) \to \HH^{q}(X, \Aff)$ is zero for all $q \geq 1$, 
which would in turn imply that for all $q$ the following sequence is exact
\begin{align*}
0 \to \HH^{q}(X, \Aff) \to \HH^{1,q}(X, \R) \to \HH^{0,q+1}(X, \R) \to 0.
\end{align*}
A conjecture similar to the one of Mikhalkin and Zharkov was made by Liu \cite{Yifeng} for tropical Dolbeault cohomology,
which is a cohomology theory of non-archimedean analytic spaces defined using superforms in the sense of Lagerberg \cite{Lagerberg}. 
We refer the reader to \cite{CLD, Gubler} for the construction of these forms on analytic spaces and 
to \cite{JSS} for the relation between the cohomology of superforms and the tropical cohomology groups considered here.}
\end{rem}

\section{Tropical cycle class map}

In this section we prove Theorem \ref{Theorem II intro}.  
To do this,  we  first prove the existence of sections of tropical line bundles, 
and that the  construction of the divisor of a section is compatible with the Chern class map combined with capping
with the fundamental class.

\subsection{Tropical line bundles and sections}

Throughout this section $X$ is a rational polyhedral space 
with polyhedral structure $\CC$.

\begin{definition}
	Let $U \subset X$ be an open subset. 
	A \emph{tropical rational  function} $f$ on $U$
	is  a continuous function $f \colon U \to \R$ such that
	for every point $x \in U$ there exists a neighbourhood
	$x \in V \subset U$ and a polyhedral structure $\CC'$
	on $V$ such that 
	$f|_{\sigma}$ is (the restriction of) an affine $\Z$-linear function
	for any $\sigma \in \CC'$. 
	The set of tropical rational functions on $U$
	is denoted by $\MS(U)$.
\end{definition}

The map $U \mapsto \MS(U)$ defines a sheaf on $X$. 
We consider the short exact sequence of sheaves
\begin{align*}
0 \to \Aff \to \MS \to \MS / \Aff \to 0.
\end{align*}
Upon taking the long exact sequence in cohomology we obtain a map 
$\delta \colon \HH^0(X, \MS/\Aff) \to \HH^1(X,\Aff)$.
Recall that $\Pic(X) =\HH^1(X,\Aff)$

\begin{definition}\label{def:section}
  Let $L \in \Pic(X)$ be a line bundle.
	A \emph{section} of $L$ is an element $s \in \HH^0(X, \MS/\Aff)$ such that $\delta(s) = L$.
\end{definition}

  Let us assume that $L$ can be represented 
	by transition functions $(f_{ij})$ with respect to the open covering 
	$\US = (U_i)$. 
	Then a section $s$ of $L$ is equivalent to a collection of tropical rational functions
	$s_i \in \MS(U_i)$ which satisfies
	\[
		s_i - s_j = f_{ij}
	\]
	for all $i \neq j$. 
We use the notation $\Div(X) 	= \HH^0(X, \MS/\Aff)$ and call an element $s \in \Div(X)$
a Cartier divisor of $X$.

In the remainder of this section we establish the existence of a section of a tropical line bundle on a polyhedral space.
A version of this  statement first appeared in the thesis of Torchiani in the case 
when $X$ has no points of sedentarity  \cite[Theorem 2.3.4]{CaroThesis}. 
We start with the following lemmas.

\begin{lemma}\label{lem:piecewiseaffineonsigma}
Let  $\sigma$  be a compact rational polyhedron in $\TT^r$.
If $s \colon  \partial \sigma \to \R$ is a 
rational function, 
then $s$ can be extended to a rational function on all of $\sigma$.
\end{lemma}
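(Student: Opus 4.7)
The plan is to construct the extension by coning from a rational interior point of $\sigma$, using a further radial refinement to secure integer slopes.

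First I would reduce to the case $\sigma \subset \R^r$ (no coordinates at $-\infty$): the boundary $\partial \sigma$ decomposes according to the stratification of $\T^r$, and since $s$ is continuous and real-valued, it suffices to construct the extension on the stratum of $\sigma$ of maximal sedentarity-free dimension and then take closures. Next, refining if necessary, I choose a rational polyhedral subdivision $\DS$ of $\partial \sigma$ whose cells are (rational) simplices on which $s$ is $\Z$-affine, and I pick a rational interior point $x_0 \in \relint(\sigma)$ together with a rational value $a \in \QQ$ intended to play the role of $s(x_0)$. Coning $\DS$ from $x_0$ yields a subdivision of $\sigma$ into cells $\{x_0\} * \tau$ for $\tau \in \DS$.

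On each cone $\{x_0\} * \tau$ I would define the extension by piecewise $\Z$-affine interpolation between $s|_\tau$ and the value $a$ at $x_0$. The essential building block is the one-dimensional case: given a closed interval $[a_0, b_0]$ with prescribed real endpoint values $\alpha_0, \alpha_1$, we can subdivide $[a_0, b_0]$ at a suitable rational point $c$ and define the extension to be $\Z$-affine on $[a_0, c]$ and on $[c, b_0]$ with integer slopes $m_0, m_1$ satisfying
\[
  m_0(c - a_0) + m_1(b_0 - c) = \alpha_1 - \alpha_0,
\]
which has a solution for an appropriate rational $c$. Applying this one-dimensional construction along each radial segment $[x_0, v]$ for vertices $v$ of $\DS$, and extending affinely in the tangential directions along the faces of $\tau$, produces the local extension on each cone.

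The main obstacle is global compatibility: the piecewise integer-slope extensions on adjacent cones must agree on their shared faces, which requires the radial subdivisions and the interpolating slopes to match coherently. This is a finite arithmetic condition on the affine functions occurring in $\DS$; it can be resolved by choosing $x_0$, the value $a$, and all subdivision parameters with a common rational denominator large enough to absorb the slope constraints from every cell of $\DS$. Once this coherence is arranged, the local extensions glue along shared faces of the coned subdivision into a globally defined rational function $\tilde{s}$ on $\sigma$ extending $s$.
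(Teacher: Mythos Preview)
Your coning strategy is natural, but as written it has two genuine gaps.

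First, the step ``extending affinely in the tangential directions'' does not produce $\Z$-affine pieces. If you place independent break points $c_i$ on the radial edges $[x_0,v_i]$ and then take the unique affine function on a cell such as $[x_0,c_0,\dots,c_k]$ matching the corner values, its linear part has no reason to be integral: integer slopes on the $1$-skeleton do not force integer slopes under affine interpolation. To get $\Z$-affine pieces on a cone $\{x_0\}*\tau$ you would have to subdivide by a level set of the integer linear form $\ell_\tau$ vanishing on $\tau$ and use functions of the shape $g_\tau+m\,\ell_\tau$ (with $g_\tau$ a $\Z$-affine extension of $s|_\tau$), which is not the construction you describe.

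Second, even after that correction, the compatibility you dismiss as ``a finite arithmetic condition resolved by a common rational denominator'' is a real obstruction. Matching two such extensions along a shared face $\{x_0\}*\rho$ forces, in particular, an equality at $x_0$ of the form
\[
  g_{\tau_i}(x_0)+m^{(i)}\ell_{\tau_i}(x_0)\;=\;g_{\tau_{i+1}}(x_0)+m^{(i+1)}\ell_{\tau_{i+1}}(x_0)
\]
for every adjacent pair. Calling this common value $C$, you need $(C-g_{\tau_i}(x_0))/\ell_{\tau_i}(x_0)\in\Z$ for every $i$. The numbers $g_{\tau_i}(x_0)$ and $\ell_{\tau_i}(x_0)$ are real (the constant terms of $g_{\tau_i}$ and $\ell_{\tau_i}$ are arbitrary reals, since rational polyhedra only constrain slopes), so this is a simultaneous congruence over $\R$ that generically has no solution. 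Clearing denominators does nothing here; the obstruction is not about rationality of the break points but about the real constants in the affine data of $s$.

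A smaller point: your reduction to $\sigma\subset\R^r$ is too quick. A compact polyhedron in $\T^r$ can have non-compact top stratum (already $[-\infty,0]\subset\T$), so ``work in the top stratum and take closures'' does not land you in the compact $\R^r$-case. The paper handles this by inducting on the coordinates that hit $-\infty$, using that a real-valued rational function must be constant along the divisorial direction near infinity.

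For contrast, the paper's argument in the $\R^r$-case avoids all compatibility bookkeeping via a tropical trick: for each facet $\tau\prec\sigma$ it extends $s|_\tau$ to a rational function $h_\tau$ on the hyperplane $H_\tau$, tilts it into $\sigma$ by $f_{m,\tau}(x)=h_\tau(\pi_\tau(x))+m\cdot\mathrm{dist}(x,H_\tau)$, and shows that for $m$ sufficiently negative $f_{m,\tau}\le s$ on all of $\partial\sigma$, with equality on $\tau$. Then $\max_\tau f_{m_\tau,\tau}$ is automatically piecewise $\Z$-affine and restricts to $s$; taking a maximum makes the gluing free.
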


\begin{proof}
We start with the case where $\sigma \subset \R^r$, so that  
$\sigma$ does not contain points of higher sedentarity.
We can assume without loss of generality that $\sigma$ is of dimension $r$. 
For a codimension one  face $\tau $ of $\sigma$, let $H_{\tau}$ denote the hyperplane in $\R^r$ containing $\tau$. 
We can construct a rational  function $h_{\tau} \colon H_{\tau} \rightarrow \R$ 
which restricts to $s$ on the face $\tau$. 
To do this, notice that each point in $H_{\tau}$ can be uniquely written as $x + v$ 
where $x \in \delta \prec \tau$ and $v$ 
lies in the normal cone of the face $\delta$ in the  polyhedron $\tau$ with respect to the standard scalar product in $\R^r$. 
Then $h_{\tau}(x + v) = s(x)$. 

For each codimension one  face $\tau$ of  $\sigma$, choose 
a vector $v_{\sigma,\tau} \in \ZZ^r$ pointing from 
$\tau$ to $\sigma$ such that $\Linear_{\Z}(\sigma) = \Linear_{\Z}(\tau) + \ZZ v_{\sigma,\tau}$.
Then let $\pi_{\tau} \colon \R^r \to H_ \tau$   be 
defined by $ \pi_\tau(x)  = x -  \dist(x, H_\tau) v_{\sigma, \tau}$. Choose $m \in \ZZ$ and set
\begin{align*}
  f_{m, \tau}(x) = h_\tau(\pi_\tau(x)) + m \dist(x,H_\tau).
\end{align*}
We will show that for each $\tau$ there exists $m_{\tau} \in \ZZ$ such that $f_{m_\tau,\tau}(x) \leq s(x)$ 
for all $x \in \partial \sigma$. Since $f_{m_\tau,\tau}(x)  = s(x)$ 
for all $x \in \tau$, 
this implies that 
the rational function 
\begin{align}\label{eqn:defh}
  h \colon \sigma \to \R, \ x \mapsto \max_\tau {f_{m_{\tau}, \tau}(x)},
\end{align}
satisfies $h|_{\partial \sigma} = s$, as required.

To find {$m_\tau$} for a fixed $\tau$, 
we proceed as follows.
Let $D \subset H_\tau$ be a domain of linearity
of $h_\tau$ and $\delta \subset \partial \sigma$
be a domain of linearity of $s$. 
We will show that there exists an $m$ such that $f_{m,\tau}(x) \leq s(x)$
for all $x \in \pi_\tau^{-1}(D) \cap \delta$. 
Since there are only finitely  many pairs $D,\delta$ to check we can find the desired $m_\tau$. 

Firstly, if $D \cap \delta = \emptyset$,
then let 
\begin{align*}
\dist(D, \delta) := \min_{x \in \pi_\tau^{-1}(D) \cap \delta} \dist(x, H_\tau) > 0.
\end{align*} 
It suffices to choose $m \leq \frac{-c}{\dist(D,\delta)}$, where $c$ denotes 
$\max_{x\in \partial \sigma} s(x) - \min_{x\in \partial \sigma} s(x)$.

If $D \cap \delta \neq \emptyset$, then let 
\begin{align*}
\text{cone}(D,\delta) = \{ v \in \R^r \ | \ x + \varepsilon v \in \pi^{-1}_{\tau}(D) \cap \delta \text{ for some }  x \in D \cap \delta \text{ and some } \varepsilon >0\}, 
\end{align*}
 and take $v_1, \dots, v_l$ to be generators of this cone. 
Notice that the differentials 
$(\diff f_{m,\tau})_x (v_i)$ and $\diff s_y(v_i)$ are constant over all $x \in \pi^{-1}_{\tau}(D) $ and all  $y \in \delta$. 
Then choose an $m$ satisfying 
$(\diff f_{m,\tau})_x (v_i) \leq \diff s_y(v_i)$
for all $i$, all $x \in \pi^{-1}_{\tau}(D) $, and all $y \in \delta$. 
Such a choice of $m$ is  possible since the left hand side can be made arbitrarily small except 
for when $v_i$ lies in the lineality space of $\text{cone}(D,\delta)$.
In this case, both sides
agree since $f_\tau$ and $s$ agree on $D \cap \delta$.
By linearity it follows that 
\begin{align*}
(\diff f_{m,\tau})_x (v) \leq \diff s_y(v)
\end{align*}
for any $v \in \text{cone}(D,\delta)$, $x \in \pi^{-1}_{\tau}(D)$, and every $y \in \delta$. Finally, every 
$x \in \pi_\tau^{-1}(D) \cap \delta$ can be written in the form 
$x = x_0 + v$, where $x_0 \in D \cap \delta$ and $v \in \text{cone}(D,\delta)$. 
Then by choosing such an $m$, it  follows that for all $x \in \pi^{-1}_ \tau (D) \cap \delta $ we have $f_{m, \tau}(x) \leq s(x)$.

Now suppose that $\sigma$ is a polyhedron in $\TT^r$. 
We proceed by induction, with the base case being when all points in $\sigma$ are of sedentarity zero. 
In this case, the above argument applies. 
Now asume that the statement holds if  $\sigma$ does not intersect $\TT^r_i$ for $1 \leq i \leq   k-1$ but that $\sigma \cap \TT^r_k \neq \emptyset$. 
There  exists a  constant $c_k$ and a function $s_k \colon \TT^r_k \to \R$ such that $s|_{\sigma \cap H_k^- } = s_k \circ \pi_k$, where $H_k^-$ is the closed half-space defined by $\langle x, e_k \rangle \leq  c_k$. Let  $H_k$  be the hyperplane defined by $\langle x, e_k \rangle =  c_k$ and $H_k^+$ be the  closed half-space defined by $\langle x, e_k \rangle \geq  c_k$.

Now $\sigma \cap H_k^+$ is a rational polyhedron in $\TT^r$ which contains no points of sedentarity $\{k\}$. 
We define a rational function 
$s_k^- \colon \sigma \cap H_k^- \to \R$ given by $x \mapsto s_k(\pi_k(x))$.
Note that for all $x \in \partial \sigma \cap H_k^-$, we have $s_k^-(x) = s(x)$.
Hence, the rational functions $s$ and $s_k^-$ induce a function $s' \colon \partial (\sigma \cap H_k^+) \to \R$. 
By the induction assumption there exists a rational function $s_k^+$ on $\sigma \cap H_k^+$ which extends $s'$.
Then the function $s : \sigma \to \R$ given by 
\[
  x \mapsto 
		\begin{cases} 
		  s_k^+(x) & x \in \sigma \cap H_k^+, \\
		  s_k^-(x) & x \in \sigma \cap H_k^-, \\
		\end{cases}
\]
is a well-defined rational function that extends $s$, as required. 
\qed
\end{proof}

For any subset $K \subset X$  we define $$\mathcal{M}(K) \colon = \varinjlim_{\substack{U \subset X \text{open} \\ K \subset U}} \mathcal{M}(U).$$

\begin{lemma} \label{pc softness}
	Let $K$ be a compact polyhedral subset contained in an open star of $\CC$ let $s$ be a section in $\MS(K)$. 
	Then there exists $s' \in \MS(X)$ such that $s' |_K = s$.  
\end{lemma}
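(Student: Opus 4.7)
The plan is to reduce to an extension problem on the closed star $\overline{\St}(\sigma)$, where $\sigma \in \CC$ is a face whose open star $\St(\sigma)$ contains $K$. I will construct a rational function $s'$ on $\overline{\St}(\sigma)$ that equals $s$ on $K$ and vanishes identically on the relative boundary $\overline{\St}(\sigma) \setminus \St(\sigma)$, and then extend by zero to the rest of $X$. Continuity across $\overline{\St}(\sigma) \setminus \St(\sigma)$ is automatic since $s'$ vanishes there, and piecewise $\Z$-affineness is guaranteed locally by the construction, so the resulting function lies in $\MS(X)$.

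First I would fix a chart $\varphi \colon U \to \T^r$ containing $\overline{\St}(\sigma)$ (which exists by Definition \ref{def:facestructure}) and choose a common refinement $\CC'$ of the polyhedral structure on $\overline{\St}(\sigma)$ inherited from $\CC$ and of the polyhedral structure on $K$ witnessing that $s$ is rational. This refinement can be arranged so that $K$ is a subcomplex of $\CC'$ and so that each polyhedron of $\CC'$ is contained in the relative interior of a unique face of $\CC$. This second property implies that the polyhedra of $\CC'$ whose ambient $\CC$-face does not contain $\sigma$ form a subcomplex supported exactly on $\overline{\St}(\sigma) \setminus \St(\sigma)$, and this subcomplex is disjoint from $K$ since $K \subset \St(\sigma)$.

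Next I would define $s'$ on the polyhedra of $\CC'$ by induction on dimension. On each $\tau \subset K$ set $s'|_\tau := s|_\tau$; on each $\tau \subset \overline{\St}(\sigma) \setminus \St(\sigma)$ set $s'|_\tau := 0$; on all remaining $\tau$, the restriction $s'|_{\partial \tau}$ is already defined by the inductive hypothesis, and it is a rational function on $\partial \tau$ because the subcomplex properties force the two prescribed definitions to agree on any common face. Lemma \ref{lem:piecewiseaffineonsigma} then produces a rational extension to $\tau$, and assembling all these extensions yields a rational function $s'$ on $\overline{\St}(\sigma)$ which restricts to $s$ on $K$ and to $0$ on $\overline{\St}(\sigma) \setminus \St(\sigma)$.

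The main obstacle I anticipate is purely combinatorial: verifying the existence of the refinement $\CC'$ with the required subcomplex properties, and then checking that the function $s'$ on $\overline{\St}(\sigma)$ glues with the zero function on $X \setminus \overline{\St}(\sigma)$ to give a globally continuous and piecewise $\Z$-affine function, hence an element of $\MS(X)$. This last point reduces to the observation that any polyhedron of $X$ meeting both $\overline{\St}(\sigma)$ and its complement meets their common boundary along a subset of $\overline{\St}(\sigma) \setminus \St(\sigma)$, where $s'$ is identically zero, so no discontinuity or non-integer slope is introduced at the gluing.
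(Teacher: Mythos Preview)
Your approach is essentially the same as the paper's: build $s'$ skeleton by skeleton, prescribing $s'=s$ on $K$ and $s'=0$ on a ``boundary'' set, filling in the remaining cells via Lemma~\ref{lem:piecewiseaffineonsigma}, and then extending by zero to the rest of $X$.

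There is, however, a genuine gap in your choice of ambient region. Lemma~\ref{lem:piecewiseaffineonsigma} applies only to \emph{compact} rational polyhedra, and its proof uses compactness in an essential way (the constants $c = \max s - \min s$ and $\dist(D,\delta)$ would not be available otherwise). The closed star $\overline{\St}(\sigma)$ need not be compact, so the cells of your refinement $\CC'$ can be unbounded, and your inductive step breaks down precisely on those cells. The paper avoids this by working not on all of $\overline{\St}(\sigma)$ but on a compact polyhedral set $K_2$ with $K \subset \mathring{K}_2$ and $K_2$ contained in the open star; the role of your set $\overline{\St}(\sigma)\setminus\St(\sigma)$ is then played by $\partial K_2$.

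A second, related point: since $\MS(K)$ is defined as a direct limit over open neighbourhoods of $K$, the conclusion $s'|_K = s$ means agreement on some open neighbourhood of $K$, not merely pointwise agreement on $K$. Your construction only guarantees the latter. The paper handles this by first extending $s$ to a slightly larger compact polyhedral set $K_1$ with $K \subset \mathring{K}_1 \subset K_1 \subset \mathring{K}_2$, making $K_1$ a subcomplex, and prescribing $s'=s$ on all of $K_1$; then $s'$ and $s$ agree on the open set $\mathring{K}_1 \supset K$. Once you replace $\overline{\St}(\sigma)$ by such a pair $K_1 \subset K_2$, your argument goes through and coincides with the paper's.
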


\begin{proof}
Note that since $s$ is rational function that is defined on an open neighborhood of $K$ 
it extends to a compact polyhedral subset $K_1$ that satisfies $K \subset \mathring K_1$. 
Let $K_2$ be a compact polyhedral subset such that $K_1 \subset \mathring K_2$. 
We can assume that $K_2$ is contained in the same open star as $K$.
We fix a polyhedral structure $\DS$ on $K_2$ such that $K_1$ is the 
support of a polyhedral subcomplex.  
We construct $s'$ inductively on the skeleta of $\DS$. 
Assume that an extension of $s$ is defined on the $k$-skeleton of $\DS$. 
Let $\sigma$ be a $k+1$ polyhedron in $\DS$. 
If $\sigma$ is contained in the boundary of $K_2$, then $s'|_{\sigma} = 0$.
If $\sigma \subset K_1$, then $s'|_{\sigma} = s|_{\sigma}$. 
Otherwise  Lemma \ref{lem:piecewiseaffineonsigma}  provides an extension of the rational function $s'|_{\partial \sigma}$ to  a rational function $s'|_{\sigma}$.   
Therefore, we can extend the rational function $s'$ to all of $K_2$. 

By construction the rational function $s'$ satisfies $s'|_{\partial K_2} = 0$.
By declaring $s'$ to be zero outside of $K_2$ we obtain a rational function on $X$.  
Since $s'|_{\mathring K_1} = s|_{\mathring K_1}$ we have $s'|_K = s$.  
\qed
\end{proof}

\begin{lemma}\label{prop:vanishingH1M}
If $X$ is a rational polyhedral space, then 
$\HH^{1}(X, \MS) = 0$. 
\end{lemma}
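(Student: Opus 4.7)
The strategy is to deduce the vanishing from the fact that $\MS$ is a soft sheaf on $X$, combined with the classical theorem (see e.g.\ Bredon, \emph{Sheaf Theory}, II.9.11) that soft sheaves on paracompact Hausdorff spaces have vanishing higher cohomology. Since $X$ is paracompact and Hausdorff by Definition \ref{def:polyhedralspace}, once softness is established the vanishing $\HH^{1}(X,\MS)=0$ follows immediately (in fact for all $q\geq 1$).

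Proving softness of $\MS$ amounts to showing that for any closed subset $A \subset X$ and any section $s \in \MS(A)$, there exists a global section $s' \in \MS(X)$ with $s'|_A = s$. Lemma \ref{pc softness} is precisely this extension property in the special case where $A = K$ is a compact polyhedral subset contained in an open star of $\CC$. The plan is to bootstrap from this case to arbitrary closed subsets. Using paracompactness of $X$ and the existence of polyhedral structures locally, I will choose a locally finite collection $\{K_i\}_{i \in I}$ of compact polyhedral subsets, each contained in an open star of $\CC$, whose interiors still cover $X$. I will then fix a well-ordering of $I$ and build the extension by transfinite induction, at stage $i$ modifying the current extension only over $K_i$.

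The main obstacle is controlling this inductive gluing: the modification at step $i$ must respect what was already set on the overlap with the $K_j$ for $j < i$, and the cumulative modifications must assemble into a well-defined rational function on $X$. To handle this, at stage $i$ I consider the difference $d_i = s - s_{<i}$ restricted to a neighborhood of $K_i$; this is a rational function vanishing on a neighborhood of $K_i \cap \bigl(A \cup \bigcup_{j<i}K_j\bigr)$. Inspecting the proof of Lemma \ref{pc softness}, the extension it produces is obtained by declaring it to vanish outside a slightly larger compact polyhedral neighborhood $K_2$ of $K_i$, so it has ``compact support''. Adding such a compactly supported extension of $d_i$ to $s_{<i}$ yields $s_{\leq i}$, which agrees with $s$ on a neighborhood of $A \cup \bigcup_{j \leq i} K_j$ and differs from $s_{<i}$ only on a compact set. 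Local finiteness of $\{K_i\}$ then guarantees that the sum of all these modifications is locally a finite sum, hence a well-defined rational function on $X$. This produces the required global extension and establishes softness.
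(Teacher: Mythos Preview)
Your strategy—prove $\MS$ is soft and invoke the standard acyclicity theorem—is legitimate and would in fact yield the stronger conclusion $\HH^q(X,\MS)=0$ for all $q\geq 1$. The paper takes a different route: it embeds $\MS$ into an acyclic sheaf $\FS$, sets $\GS=\FS/\MS$, and shows $\FS(X)\to\GS(X)$ is surjective by a Zorn's lemma argument over unions $K_J=\bigcup_{j\in J}K_j$ of compact polyhedral pieces. The inductive step there applies Lemma~\ref{pc softness} to $K_j\cap K_J$, which is a \emph{finite} union of intersections $K_j\cap K_{j'}$ and hence genuinely compact polyhedral and contained in an open star. That is the point where the two arguments diverge.

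Your transfinite induction, as written, has a real gap at exactly this point. You form $d_i=s-s_{<i}$ ``restricted to a neighborhood of $K_i$'', but $s$ is only defined on a neighborhood of the arbitrary closed set $A$, so $d_i$ is not defined near $K_i$ unless $K_i$ happens to lie in that neighborhood. Moreover, your stated vanishing of $d_i$ on a neighborhood of $K_i\cap(A\cup\bigcup_{j<i}K_j)$ cannot be right: vanishing near $K_i\cap A$ would mean $s_{<i}$ already agrees with $s$ there, which is the conclusion you are trying to establish, not the inductive hypothesis. The underlying difficulty is that the set from which you must extend at stage $i$ involves $A\cap K_i$, which need not be polyhedral, so Lemma~\ref{pc softness} does not apply directly. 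This can be repaired—one thickens $K_i\cap(A\cup\bigcup_{j<i}K_j)$ to a compact polyhedral neighborhood $L_i$ contained in the open set where the relevant section is already defined, applies Lemma~\ref{pc softness} to $L_i$, and then checks that the resulting extensions glue and stabilize locally by local finiteness—but none of this is in your sketch. The paper's approach avoids the issue entirely because the target section $t\in\GS(X)$ is globally defined, so no non-polyhedral closed set ever enters the induction.
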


\begin{proof}
Take an injective map from  $\MS$ to an acyclic sheaf $\FS$ and  denote the quotient sheaf by $\GS := \FS / \MS$.  
We claim that $\FS(X) \to \GS(X)$ is surjective. 
For a  $t \in \GS(X)$, let $\US = (U_i)_{i \in I}$ be a cover of $X$ such that there exist a collection   $s_i \in \FS(U_i)$ that map to $t \vert_{U_i}$. 
Since $X$ is paracompact we may assume that $\US$ is locally finite. 
For each $U_i$ take  a locally finite cover by a collection of compact polyhedral subsets so that for each $i$ 
there is a member of the cover $K_i$ which is contained in $U_i$ and $(K_i)_{i \in I}$ still cover $X$.  
For $J \subset I$ set  $K_J \colon = \bigcup_{j \in J} K_j$. 
Since the covering of $X$ by the sets $K_i$ is locally finite, the union $K_J$ is closed.

The set
\begin{align*}
\ES := \{ (J, s_J) \mid J \subset I, s_J \in \FS(K_J) \text{ mapping to } t \vert_{K_J} \}
\end{align*}
carries a  partial order given by $(J, s) \leq (J', s')$ whenever $J \subset J'$ and $s'|_{K_{J}} = s$. 
By Zorn's lemma, $\ES$ has a maximal element $(J,s)$. 
We want to show $J=I$.

Assume that there exists $j \in I \setminus J$. 
Then $s_j - s$ maps to zero in $\GS(K_j \cap K_J)$
and hence is the image of an element $r \in \MS(K_j \cap K_J)$.
By Lemma \ref{pc softness}, we can extend $r$ to a section $r' \in \MS(X)$.
Let $s' \in \FS(X)$ denote the image of $r'$ and 
consider the section $s_j - s' \in \FS(K_j)$. 
By construction, this section agrees with $s$  on $K_{J} \cap K_j$. 
Therefore we can glue $s_j - s'$ and $s$ to a section of $\FS$ over $K_{J \cup j}$. But this is contradiction 
to the maximality of $(J,s)$.

Thus $\FS(X) \to \GS(X)$ is surjective. Using the long exact sequence in cohomology associated to
$0 \to \MS \to \FS \to \GS \to 0$ and the fact that $\FS$ is acyclic, we conclude that $\HH^1(X, \MS) = 0$. 
\qed
\end{proof}

\begin{proposition}\label{prop:sec}
Any line bundle $L \in \Pic(X)$ admits a section.
\end{proposition}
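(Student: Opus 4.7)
The plan is to derive this as an immediate consequence of Lemma \ref{prop:vanishingH1M}. The key input is the short exact sequence of sheaves
\[
0 \to \Aff \to \MS \to \MS/\Aff \to 0
\]
that already appears in the preamble to Definition \ref{def:section}. Passing to the long exact sequence in cohomology gives the fragment
\[
\HH^0(X, \MS/\Aff) \xrightarrow{\delta} \HH^1(X, \Aff) \to \HH^1(X, \MS).
\]

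By the definition $\Pic(X) = \HH^1(X,\Aff)$ and $\Div(X) = \HH^0(X, \MS/\Aff)$, the map $\delta$ sends a Cartier divisor to the associated line bundle. Hence producing a section of a given $L \in \Pic(X)$ amounts to finding a preimage under $\delta$, which exists for every $L$ precisely when $\delta$ is surjective. Exactness of the sequence above reduces surjectivity of $\delta$ to the vanishing of the next term $\HH^1(X,\MS)$.

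That vanishing is exactly what Lemma \ref{prop:vanishingH1M} provides: the sheaf $\MS$ of tropical rational functions has trivial first cohomology on any rational polyhedral space $X$. Combining these observations, $\delta$ is surjective, so every $L \in \Pic(X)$ is of the form $\delta(s)$ for some $s \in \HH^0(X,\MS/\Aff)$, and this $s$ is the desired section in the sense of Definition \ref{def:section}. There is no real obstacle at this stage; all the work has already been done in building up to Lemma \ref{prop:vanishingH1M}, whose proof in turn relied on the extension result Lemma \ref{pc softness} (itself built on the piecewise extension Lemma \ref{lem:piecewiseaffineonsigma}) to show that $\MS$ is sufficiently ``soft'' for a Zorn's lemma gluing argument.
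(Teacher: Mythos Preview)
Your proof is correct and follows essentially the same route as the paper: both invoke the long exact sequence coming from $0 \to \Aff \to \MS \to \MS/\Aff \to 0$ and deduce surjectivity of $\delta$ from the vanishing $\HH^1(X,\MS)=0$ established in Lemma \ref{prop:vanishingH1M}. Your additional remarks tracing the dependence on Lemmas \ref{pc softness} and \ref{lem:piecewiseaffineonsigma} are accurate but not needed for the argument itself.
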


\begin{proof}
  Recall that the long exact sequence on cohomology associated to 
	the short exact sequence
	\begin{align*}
		0 \to \Aff \to \MS \to \MS / \Aff \to 0
	\end{align*}
	induces the maps
	\begin{align*} 
		\HH^{0}(X, \MS / \Aff) \stackrel{\delta}{\rightarrow} \HH^1(X, \Aff) \rightarrow \HH^1(X, \MS).
	\end{align*}
	By definition, we need to show that $\delta$ is surjective. 
	This follows from the vanishing of $\HH^1(X, \MS)$ established in Lemma
	\ref{prop:vanishingH1M}.
	\qed
\end{proof}

\subsection{Tropical spaces and the fundamental class}

Throughout this section $X$ is a rational polyhedral space of pure dimension $n$ with a polyhedral structure $\CC$ and 
$\DD$ is a $\CC$-stratified simplicial structure on $X$.

A point $x$ in a rational polyhedral subspace $Y$ of $\T^r$  is \emph{generic} if it 
admits an open neighbourhood in $Y$ which is an open set of an affine subspace of $\R^r_I$ for some $I$.
Being a  generic point is invariant under integral extended affine maps and hence this notion 
extends to rational polyhedral spaces.  
We denote the (open and dense) set of generic points by $X^{\gen} \subset X$.

\begin{defn}\label{def:weighted}
A rational polyhedral space  $X$ is \emph{weighted} if it is equipped with a locally constant function 
$\omega \colon X^{\gen} \to \Z\setminus \{0\}$. 
For a maximal face $\sigma \in \CC$, the function $\omega$ is constant on $\relint \sigma$ and 
we define $\omega(\sigma)$ to be this value. 
We also denote by $\omega(\Delta)$ its constant value on $\relint(\Delta) \in \DD_n$.
\end{defn}

We can extend Definition \ref{def:multitangent} to simplicial structures. For any $\Delta \in \DD_k$ whose relative interior is contained in $\sigma \in \CC$
we define  $\Linear(\Delta)$
to be the minimal 
linear subspace
 of $ \Linear(\sigma)$ which is defined over $\QQ$ and 
has the property that $\Delta$ is contained in a translate of $\Linear(\Delta)$.
Note that in general $k \leq \dim \Linear(\Delta)$. 
We set $\Linear_{\Z}(\Delta) := \Linear(\Delta) \cap \Linear_\Z(\sigma)$.
For $\Delta \in \DD_k$ with $\rank \Linear_{\Z}(\Delta) = k$, we 
define $\Lambda_\Delta$ to be the unique generator of 
$\bigwedge^k \Linear_{\Z}(\Delta) \cong \Z$ 
 compatible with the orientation of $\Delta$.
Then for a simplex $\Delta$, we define $\BF^1(\Delta)$ in the same way as for polyhedra in Definition \ref{def:multitangent}.

\begin{defn}[Fundamental chain] \label{def fundamental chain} \label{def:fundamentalclass}
The \emph{fundamental chain} of $X$ is 
\begin{align*}
  \ch(X) := \sum \limits_{\Delta \in \DD_n} \omega(\Delta) \Lambda_\Delta \otimes \Delta \in C^{\BM}_{n,n}(X, \ZZ).
\end{align*}

We call $X$ an \emph{(abstract) tropical space} if $\ch(X)$ is closed. 
In this situation, we call $[X] := [\ch(X)] \in \HH_{n,n}^{\BM}(X)$ the \emph{fundamental class} of $X$. 
\end{defn}

When $X$ is a tropical space, it is straightforward to check that the class $[X]$  does not depend on the choice of  simplicial structure  on $X$.

\begin{rem}{\rm
 The more conventional definition of a tropical space refers to the so-called balancing condition \cite[Definition 3.3.1]{MacStu},
\cite[Section 6.1]{MikRau}. 
To formulate this condition in our context, first let us use 	the notation $\Delta' \face \Delta$ to indicate pairs
$\DD_{n-1} \ni \Delta' \prec \Delta \in \DD_n$ of the  \emph{same sedentarity}.
Let  $\Delta' \in \DD_{n-1}$ be  such that 
$\Linear_{\Z}(\Delta')$ has rank $n-1$. 
A \emph{primitive generator} of a  pair $\Delta' \face \Delta$
is an integer vector $v_{\Delta, \Delta'}$ such that 
\begin{align} \label{eq primitive vector} 
v_{\Delta, \Delta'} \wedge \Lambda_{\Delta'} = \varepsilon_{\Delta, \Delta'} \Lambda_\Delta,
\end{align} 
where $\varepsilon_{\Delta, \Delta'}$ is the sign with which $\Delta'$ appears in $\partial \Delta$.
Primitive generators are unique up to adding an element in $\Linear_{\Z}(\Delta')$.
The rational polyhedral space $X$ is called \emph{balanced at $\Delta'$} if
\begin{align} \label{eq balancing condition} 
\sum_{\Delta : \Delta' \face \Delta} \omega(\Delta) v_{\Delta, \Delta'} \in \Linear_{\Z}(\Delta'),
\end{align} 
where the $v_{\Delta, \Delta'}$ are primitive generators.
The space 	$X$ is called \emph{balanced} if it is balanced at all  $\Delta' \in \DD_{n-1}$ such that $\Linear_{\Z} (\Delta')$ has rank $n-1$.
It follows from \cite[Proposition 4.3]{MikZhar} that $\ch(X)$ is a  closed $(n,n)$-cycle  if and only if $X$ is balanced. 
In particular, whether or not $X$ is a tropical space does not depend on the choice of  simplicial structure $\DD$. }
\end{rem}

\begin{defn}\label{def:contract}
Given $l \in {\bf F}_\Z^p(\sigma)$ and $v \in {\bf F}^\Z_{p'}(\sigma)$ with $p \leq p'$, 
the \emph{contraction} $\langle l; v \rangle \in {\bf F}^\Z_{p'-p}(\sigma)$
is induced by the usual contraction map 
$\langle \; ; \; \rangle \colon \bigwedge^p (\Z^r_I)^* \times \bigwedge^{p'} \Z^r_I \to \bigwedge^{p'-p} \Z^r_I$.
More generally, given $\tau, \tau' \prec \sigma$ and $l \in {\bf F}_\Z^p(\tau), v \in {\bf F}^\Z_{p'}(\sigma)$, the \emph{contraction} 
$\langle l; v \rangle$ is given by 
\begin{align*}
\langle l; v \rangle := i_{\tau',\sigma}(\langle \rmap(l); v \rangle) \; \in {\bf F}^\Z_{p'-p}(\tau').
\end{align*}
\end{defn}

\begin{defn}\label{def:capwithfun}
The  \emph{cap product with the fundamental class of $X$} is the map
\begin{align*}
\cap [X] \colon C^{p,q}(X, \Z)  &\to C_{n-p,n-q}^{\BM}(X, \Z) \\
\alpha & \mapsto \sum_{[i_0, \dots, i_n] \in \DD_n} \omega(\Delta) \langle \alpha([i_0, \dots, i_q]); \Lambda_\Delta \rangle \otimes [i_q, \dots, i_n], 
\end{align*}
where $\langle \; ; \; \rangle$ denotes the contraction introduced in Definition \ref{def:contract}.\end{defn}

The definition of the cap product can be extended to $\cap \sigma$ for arbitrary simplicial chains $\sigma \in C^{\BM}_{p',q'}(X, \ZZ)$.
Also the Leibniz formula $d(\alpha \cap \sigma) = (-1)^{q+1} (\delta \alpha \cap \sigma - \alpha \cap d\sigma)$ holds 
on the chain level. 
If $X$ is a tropical space then $d\ch(X) = 0$, and it follows that the map $\cap [X]$ described above descends to 
to a map between cohomology and Borel-Moore  homology groups 
\begin{align*}
\cap [X] \colon \HH^{p,q}(X, \Z)  &\to \HH_{n-p,n-q}^{\BM}(X, \Z). 
\end{align*}
To see that $\cap [X]$ does not depend on the simplicial structure on $X$, 
note that the cap product can also be described on the level of singular chains.

\subsection{Subspaces, Divisors, and the Chern class map}

Throughout this section $X$ is a rational polyhedral space of pure dimension $n$ with a polyhedral structure~$\CC$.

\begin{defn}\label{def:cycle}
A subset $Z \subset X$ is a \emph{rational polyhedral subspace} if it is closed and 
the restrictions of the charts of an atlas of $X$ provide an atlas as a rational polyhedral space for $Z$.
\end{defn}

Given a rational polyhedral subspace $Z$, there exists a $\CC$-stratified simplicial structure $\DD$ 
for $X$ such that $Z$ a union of cells of $\DD$. We call such a structure fine enough for $Z$. 
It can be constructed inductively as a simplicial structure of $|\CC_k|$.
Indeed, for each $\sigma \in \CC_k$, the intersection $Z \cap \sigma$ is a closed polyhedral subset
of the polyhedron $\sigma$ and any simplicial structure on $\partial \sigma$ fine enough for $Z \cap \partial \sigma$
can be extended to a simplicial structure on $\sigma$ fine enough for $Z \cap \sigma$.

Assume further that $Z$ is of pure dimension $k$ and is equipped with a weight function $\omega$.
We define 
\begin{align*}
  \ch(Z) := \sum \limits_{\substack{\Delta \in \DD_{k}  \\  \Delta \subset Z} } \omega(\Delta) \Lambda_\Delta \otimes \Delta \in C^{\BM}_{k,k}(X, \ZZ).
\end{align*}

\begin{defn}
The weighted subspace $Z$ is called a \emph{tropical cycle} if $\ch(Z)$ is a closed chain. In this  case, we 
denote $\cyc(Z) \in \HH_{k,k}^{\BM}(X, \Z)$ the \emph{cycle class} of $Z$.
\end{defn}

The tropical cycles of dimension $k$ 
form a group under taking unions and adding up weights
(see \cite[Lemmas 2.14 and 5.15]{AllermannRau} and \cite[Section 7.1]{MikRau}), 
which we denote by $\ZY_k(X)$.
With these definitions, the map
\begin{align*}
\cyc \colon \ZY_k(X) \to \HH_{k,k}^{\BM}(X, \Z);  \quad Z \mapsto \cyc(Z) 
\end{align*}
is a homomorphism. 

We are interested in a construction
which produces a  tropical cycle of dimension $n-1$ from  a Cartier divisor.
Let $s$ be a section of a line bundle $L \in \Pic(X)$ and  consider the subset of $X$ given by
\begin{align*}
\ND(s) :=  \{x \ | \   s_i \text{ is not affine in a neighbourhood of $x$}\}.
\end{align*}
It is a rational polyhedral subspace of dimension $n-1$.
Next we define weights on this set to turn it into a tropical cycle following \cite[Section 3]{AllermannRau} and \cite[Section 5.2]{MikRau}.

\begin{defn}\label{def:divcyc}
The \emph{divisor map} is 
\[
  \div \colon \Div(X)  \to \ZY_{n-1}(X), 
\]
where $\div(s) $ is a tropical codimension one cycle supported on $\ND(s)$.
The weight of a generic point $x \in \ND(s)^{\gen}$ 
(which is of sedentarity zero) is given as follows.
Fix $i$ such that $x \in U_i$ and choose 
\begin{itemize}
\item a neighbourbood $x \in U \subset U_i$ and simplicial structure $\DD$ on $U$ such that
$\ND(s) \cap U \subset |\DD_{n-1}|$ and $x \in \relint \Delta', \Delta' \in \DD_{n-1}$,
\item primitive generators $v_{\Delta, \Delta'}$ for any pair $\Delta' \face \Delta$,
\end{itemize}
and set
\begin{align} \label{eq weights of divisor}
\omega_{\div(s)}(x) = \sum_{\Delta : \Delta' \face \Delta }  
\omega_X(\Delta) \diff s_i|_{\Delta}(v_{\Delta, \Delta'}) - \diff s_i|_{\Delta'}(\sum_{\Delta : \Delta' \face \Delta } \omega_X(\Delta) v_{\Delta, \Delta'}).
\end{align}
\end{defn}

The weights $\omega_{\div(s)}(x)$ may happen to be zero;
such parts of $\ND(s)$ are tacitly removed from $\div(s)$. 
For details on this construction and proof of its well-definedness we refer to 
\cite[Sections 3 and 6]{AllermannRau} as well as \cite[Section 5.2]{MikRau}.

\begin{thm}\label{thm:commutingdiagram}
The diagram
\begin{align}\label{diag:comm}
\begin{xy}
\xymatrix{
\Div(X) \ar[r]   \ar[d]_{\div}  & \Pic(X) \ar[r]^{c_1}  & \HH^{1,1}(X, \Z) \ar[d]^{\cap [X]} \\
\ZY_{n-1}(X) \ar[rr]^{\cyc}                                                            &&  \HH^{\BM}_{n-1,n-1}(X, \Z)
}
\end{xy}
\end{align}
commutes.
\end{thm}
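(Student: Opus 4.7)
The plan is a direct chain-level comparison after choosing a suitable simplicial structure. By Proposition \ref{prop:sec} every $L \in \Pic(X)$ admits a section, so I would start with a Cartier divisor $s \in \Div(X)$ represented by tropical rational functions $s_i \in \MS(U_i)$ on an open cover $(U_i)$. After passing to a common refinement, I may assume that the $U_i$ are the open stars of the vertices $i \in \DD_0$ of a $\CC$-stratified simplicial structure $\DD$ fine enough for $\div(s)$. The line bundle $L$ is then represented by the \v{C}ech $1$-cocycle $f_{ij} := s_i - s_j \in \Aff_\Z(U_{ij})$.

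Tracing the connecting homomorphism of the exponential sequence $0 \to \underline{\R} \to \Aff_\Z \to \FS^1_\Z \to 0$ through the \v{C}ech-to-simplicial isomorphism (Remark \ref{rem:Cech vs simplicial}), the Chern class $c_1(L)$ is represented by the simplicial $(1,1)$-cocycle $\alpha([i_0, i_1]) = d(s_{i_0} - s_{i_1})$. Substituting into the explicit cap product formula of Definition \ref{def:capwithfun}, I would write
\[
  c_1(L) \cap [X] = \sum_{\Delta = [i_0, \ldots, i_n] \in \DD_n} \omega_X(\Delta)\, \langle d(s_{i_0} - s_{i_1})|_\Delta; \Lambda_\Delta \rangle \otimes [i_1, \ldots, i_n],
\]
and compare this, modulo a Borel-Moore boundary, with $\ch(\div(s)) = \sum_{\Delta' \subset \div(s)} \omega_{\div(s)}(\Delta') \Lambda_{\Delta'} \otimes \Delta'$ whose weights are given by \eqref{eq weights of divisor}.

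The creative step is to exhibit an explicit primitive. Choosing a total order on vertices and writing $i_0(\Delta)$ for the smallest vertex of $\Delta$, I would set
\[
  \gamma := \sum_{\Delta \in \DD_n} \omega_X(\Delta)\, \langle ds_{i_0(\Delta)}|_\Delta; \Lambda_\Delta \rangle \otimes \Delta \in C^{\BM}_{n-1, n}(X, \Z),
\]
which makes sense because $\DD$ is fine enough for $\div(s)$, so each $s_{i_0(\Delta)}$ is affine on $\Delta$. The goal is then to verify $\partial \gamma = c_1(L) \cap [X] - \ch(\div(s))$ (up to a sign determined by convention). The computation localizes to each $(n-1)$-simplex $\Delta'$ with smallest vertex $j_0$: contributions to $\partial \gamma - c_1(L) \cap [X]$ come from all top simplices $\Delta$ containing $\Delta'$, and careful sign bookkeeping (combining the simplicial boundary sign with the vertex-ordering asymmetry built into $i_0(\Delta)$) arranges that \emph{only} $ds_{j_0}$ survives, yielding a coefficient of the form $\sum_{\Delta \colon \Delta' \face \Delta} \omega_X(\Delta) \langle ds_{j_0}|_\Delta; v_{\Delta,\Delta'} \wedge \Lambda_{\Delta'} \rangle$. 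Expanding the interior product via Leibniz and invoking the balancing $\sum_\Delta \omega_X(\Delta) v_{\Delta, \Delta'} \in \Linear_\Z(\Delta')$ together with $w \wedge \Lambda_{\Delta'} = 0$ for $w \in \Linear_\Z(\Delta')$, this coefficient collapses precisely to $\omega_{\div(s)}(\Delta') \Lambda_{\Delta'}$ via formula \eqref{eq weights of divisor}. For $\Delta' \not\subset \div(s)$ the differential $ds_{j_0}$ is continuous across $\Delta'$ and the same manipulation yields zero, so $\partial \gamma$ agrees with $c_1(L) \cap [X]$ off the divisor.

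The main obstacle is the careful bookkeeping of the boundary computation: tracking the simplicial signs $(-1)^k$, the orientation signs $\varepsilon_{\Delta, \Delta'}$, the vertex-ordering asymmetry of $\gamma$, and the stratification of $X$ all at once. The asymmetry introduced by the choice $i_0(\Delta)$ is only resolved globally around each $\Delta'$, relying on the automatic cocycle relation $f_{ij} + f_{jk} = f_{ik}$ and the balancing of $\ch(X)$. For $(n-1)$-simplices meeting top simplices of different sedentarity, one additionally needs the naturality of the contraction under $\imap$ and the fact that divisorial directions contribute trivially to integral affine functions; this ensures that the local formula extends from generic points of $\div(s)$ to the boundary strata of $X$.
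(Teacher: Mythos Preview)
Your proposal is correct and follows essentially the same route as the paper: both compute $c_1(L)\cap[X]$ from the cap product formula with the \v{C}ech cocycle $df_{ij}=d(s_i-s_j)$, and both exhibit the difference with $\cyc(\div(s))$ as the boundary of exactly the chain you call $\gamma$, namely $\sum_{\Delta\in\DD_n}\omega_X(\Delta)\,\langle ds_{i_0(\Delta)};\Lambda_\Delta\rangle\otimes\Delta$. The only organizational difference is that the paper isolates the identification of $\cyc(\div(s))$ with $\sum_{\Delta'\prec\Delta}\omega(\Delta)\,\imapdd(\langle ds_{i_{\Delta'}}|_\Delta;\varepsilon_{\Delta,\Delta'}\Lambda_\Delta\rangle)\otimes\Delta'$ as a separate Lemma~\ref{lem:cycdiv} and then runs the comparison simplex-by-simplex over $\Delta\in\DD_n$, whereas you sum first into $\gamma$ and localize at each $\Delta'$; the Leibniz/balancing step you describe is precisely the content of equation~\eqref{eqn:contract} in that lemma. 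One small remark: your opening appeal to Proposition~\ref{prop:sec} is unnecessary here, since the diagram begins at $\Div(X)$ and a section is part of the data.
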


\begin{lemma}\label{lem:cycdiv}
Let $s = (s_i)_i \in \Div(X)$ be a Cartier divisor and let $\DD$ be a simplicial structure fine enough for $\ND(s)$.
Then 
\begin{align}\label{eqn:cycdivcell} 
\cyc (\div (s)) =   \sum_{\substack{\Delta \in \DD_n \\ \Delta' \in \DD_{n-1} \\ \Delta' \prec \Delta}}
\omega(\Delta) \cdot 
\imapdd\left(\langle \diff s_{i_{\Delta'}}|_{\Delta}; \varepsilon_{\Delta, \Delta'}  \Lambda_\Delta \rangle\right)  \otimes \Delta' 
\in \HH^{\BM}_{n-1, n-1}(X), 
\end{align}
where $\varepsilon_{\Delta, \Delta'} = \pm 1$ is the relative orientation of $\Delta$ and $\Delta'$ and $i_{\Delta'}$ is chosen 
so that $\relint \Delta' \subset U_{i_{\Delta'}}$.
\end{lemma}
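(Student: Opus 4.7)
The plan is to verify formula (\ref{eqn:cycdivcell}) on the chain level, by computing, for each $(n-1)$-simplex $\Delta' \in \DD_{n-1}$, the coefficient of $\Delta'$ on the right-hand side and matching it with the weight of $\Delta'$ in $\ch(\div(s))$. I fix a choice of $i_{\Delta'}$ with $\Delta' \subset U_{i_{\Delta'}}$ and denote the RHS coefficient by
\begin{align*}
c(\Delta') := \sum_{\Delta \succ \Delta'} \omega(\Delta) \cdot \imapdd\bigl(\langle \diff s_{i_{\Delta'}}|_\Delta;\, \varepsilon_{\Delta,\Delta'}\Lambda_\Delta \rangle\bigr) \in \BF^\Z_{n-1}(\Delta').
\end{align*}
Since $\DD$ is fine enough for $\ND(s)$, either $\Delta' \subset \ND(s)$ or $\relint(\Delta') \cap \ND(s) = \emptyset$; in the latter case, as $\dim \ND(s) = n-1$ and every $\Delta \supsetneq \Delta'$ has dimension $n$, we actually have $\ND(s) \cap \St(\Delta') = \emptyset$, so $s_{i_{\Delta'}}$ is affine on the entire open star of $\Delta'$.

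In the first case ($\Delta' \not\subset \ND(s)$), the differential $\diff s_{i_{\Delta'}}|_\Delta$ equals a single $1$-form $\alpha$ for every $\Delta \succ \Delta'$, so
\begin{align*}
c(\Delta') = \Bigl\langle \alpha;\, \sum_{\Delta \succ \Delta'} \omega(\Delta)\varepsilon_{\Delta,\Delta'}\imapdd(\Lambda_\Delta)\Bigr\rangle.
\end{align*}
The inner sum is precisely the coefficient of $\Delta'$ in $\partial \ch(X)$, which vanishes because $X$ is a tropical space (i.e. is balanced). Hence $c(\Delta') = 0$, matching the coefficient of $\Delta'$ in $\ch(\div(s))$.

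In the second case ($\Delta' \subset \ND(s)$), I pick primitive generators $v_{\Delta,\Delta'}$ so that $v_{\Delta,\Delta'} \wedge \Lambda_{\Delta'} = \varepsilon_{\Delta,\Delta'}\Lambda_\Delta$, and apply the derivation identity $\iota_\beta(v\wedge\eta) = \beta(v)\eta - v\wedge\iota_\beta(\eta)$ to each summand of $c(\Delta')$, noting that $\iota_{\diff s_{i_{\Delta'}}|_\Delta}$ applied to $\Lambda_{\Delta'}$ only sees directions tangent to $\Delta'$ and so agrees with $\iota_{\diff s_{i_{\Delta'}}|_{\Delta'}}$. After summing over $\Delta$ and setting $w := \sum_\Delta \omega(\Delta) v_{\Delta,\Delta'}$, the balancing of $X$ at $\Delta'$ gives $w \in \Linear_\Z(\Delta')$, so $w \wedge \Lambda_{\Delta'} = 0$ in $\bigwedge^n \Linear_\Z(\Delta')$; a second application of the derivation identity then yields $w \wedge \langle \diff s_{i_{\Delta'}}|_{\Delta'};\, \Lambda_{\Delta'}\rangle = \diff s_{i_{\Delta'}}|_{\Delta'}(w)\,\Lambda_{\Delta'}$. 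Collecting terms gives
\begin{align*}
c(\Delta') = \biggl[\sum_\Delta \omega(\Delta)\,\diff s_{i_{\Delta'}}|_\Delta(v_{\Delta,\Delta'}) - \diff s_{i_{\Delta'}}|_{\Delta'}(w)\biggr]\Lambda_{\Delta'},
\end{align*}
which is exactly the weight formula (\ref{eq weights of divisor}), so $c(\Delta') = \omega_{\div(s)}(\Delta')\Lambda_{\Delta'}$.

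The main obstacle I expect is the careful handling of degenerate $(n-1)$-simplices: those for which $\Linear_\Z(\Delta')$ has sub-maximal rank (where $\Lambda_{\Delta'}$ is not defined in the standard way) and those lying in higher sedentarity strata, where the balancing condition takes a modified form. For such $\Delta'$ the RHS lands in a degenerate component of $\BF_{n-1}(\Delta')$ and a variant of the balancing argument forces $c(\Delta') = 0$; such cells also do not arise as generic cells of $\div(s)$, so both sides agree. Local finiteness of the RHS as a Borel-Moore chain follows from local finiteness of $\DD$, and independence of the formula from the choice of $i_{\Delta'}$ is a consequence of the chain-level identification with $\ch(\div(s))$. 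Combining the two cases yields the desired identity in $H^{\BM}_{n-1,n-1}(X)$.
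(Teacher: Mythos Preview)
Your proposal is correct and follows essentially the same route as the paper's proof: both split according to whether $\Delta' \subset \ND(s)$ or not, use the derivation identity for contraction against $\varepsilon_{\Delta,\Delta'}\Lambda_\Delta = v_{\Delta,\Delta'}\wedge\Lambda_{\Delta'}$ in the first case to recover the weight formula \eqref{eq weights of divisor}, and in the second case observe that the coefficient is the contraction of a single $1$-form against the $\Delta'$-coefficient of $\partial\ch(X)$, which vanishes by balancing. The paper dispenses with your flagged obstacle about rank by noting up front that, since $\DD$ restricts to a genuine simplicial structure on $\ND(s)$, any $\Delta'\in\DD_{n-1}$ contained in $\ND(s)$ automatically has $\Linear_\Z(\Delta')$ of full rank $n-1$; the higher-sedentarity case for $\Delta'\subset\ND(s)$ is treated only implicitly in the paper as well.
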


\begin{proof}
Since $\DD$ is a simplicial structure on $X$ fine enough for $\ND(s)$, the simplices $\{\Delta \in \DD | \Delta \subset \ND(s)\}$
form a simplicial structure for $\ND(s)$. 
In particular, it follows that for any $\Delta' \in \DD_{n-1}$ with 
$\Delta' \subset \ND(s)$ the lattice $\Linear_{\Z}(\Delta')$ is of rank $n-1$. 
For any $\Delta' \face \Delta$ we choose primitive generators $v_{\Delta, \Delta'}$ (satisfying
\eqref{eq primitive vector}) and use the notation $v_{\Delta'} \in \Linear_{\Z}(\Delta')$ for the balanced sum in \eqref{eq balancing condition}.
We now want to show that for any $\Delta' \in \D_{n-1}$ its coefficients 
on the right hand and left hand side of \eqref{eqn:cycdivcell} agree.

First assume $\sed(\Delta') = 0$ and $\Delta' \subset \ND(s)$.
Then for any facet $\Delta' \prec \Delta$ 
the rules of contracting wedge products along $1$-forms 
applied to equation \eqref{eq primitive vector} provide
\begin{align}\label{eqn:contract}
\langle \diff s_{i_{\Delta'}}|_{\Delta}; \varepsilon_{\Delta, \Delta'} \Lambda_{\Delta} \rangle = 
\diff s_{i_{\Delta'}}|_{\Delta}(v_{\Delta, \Delta'}) \cdot \Lambda_{\Delta'} - 
 v_{\Delta, \Delta'} \wedge \langle \diff s_{i_{\Delta'}}|_{\Delta}; \Lambda_{\Delta'} \rangle.
\end{align}
Moreover, since $v_{\Delta'} \wedge \Lambda_{\Delta'} = 0$, we have
\[
  \diff s_{i_{\Delta'}}|_{\Delta}(v_{\Delta'}) \cdot \Lambda_{\Delta'} = 
v_{\Delta'} \wedge \langle \diff s_{i_{\Delta'}}|_{\Delta}; \Lambda_{\Delta'} \rangle.
\]
Comparing with \eqref{eq weights of divisor}, this proves the equality of coefficients.

Let us now consider $\Delta' \not\subset \ND(s)$. 
Then $s_{i_{\Delta'}}$ is affine linear in a neighbourhood of $\relint(\Delta')$ and 
therefore $\diff s_{i_{\Delta'}}$ defines an element
in ${\bf F}^1(\Delta')$ (independent of a choice of facet $\Delta' \prec \Delta$). 
The coefficient of $\Delta'$ in \eqref{eqn:cycdivcell} is therefore equal to 
\begin{align*}  
\sum_{\substack{\Delta \in \DD_{n} \\ \Delta' \prec \Delta}}
\omega(\Delta)  \langle \diff s_{i_{\Delta'}}|_{\Delta}; \varepsilon_{\Delta, \Delta'} \Lambda_\Delta \rangle = 
\langle \diff s_{i_{\Delta'}}; \sum \omega(\Delta) \varepsilon_{\Delta, \Delta'} \Lambda_\Delta \rangle.
\end{align*}
But the sum on the right hand side is exactly equal to the coefficient of $\Delta'$ in $\partial \ch(X)$, so 
it is zero since $\ch(X)$ is closed. \qed
\end{proof}

\begin{proof}[Proof of Theorem \ref{thm:commutingdiagram}]
Let $s$ be a section of a line bundle $L \in \Pic(X)$. 
Let  $\DD$ be a simplicial structure on $X$ fine enough for $\ND(s)$.
We can assume that each open star of $\DD$ is fully contained in the domain $U_i$ of $s_i$
for some $i$. Hence, by fixing an appropriate choice and restricting to open stars, 
we can even assume that $s = (s_i)_{i\in \DD_0}$ is labelled by the vertices of $\DD$ and
that $U_i$ is equal to the open star around the vertex $i$. As usual we use the notation
$U_{i_0 \dots i_n} = U_{i_0} \cap \dots \cap U_{i_n}$ for the open stars of higher-dimensional simplices
$\Delta = [i_0, \dots, i_n] \in \DD$.

We first compute the image of  $s$  following the upper right path.
Since $s$ is a section of $L$, the transition functions for $L$ are given
by $f_{ij} = s_i - s_j$ on $U_{ij}$. Using the identification 
of \v{C}ech cochains and simplicial cochains explained in Remark \ref{rem:Cech vs simplicial}, 
we conclude that $c_1(L)$ is the simplicial $(1,1)$-cochain which, when applied to an edge $[i,j] \in \DD_1$,
provides $\diff f_{ij} \in {\bf F}^1([i,j])$. 
Capping with the fundamental class then gives
\begin{align} \label{lower left}
c_1(L) \cap [X] = \sum_{\Delta = [i_0,\dots,i_n]} \omega_\Delta  
\iota_{\Delta, \Delta_0} \langle  d f_{i_0 i_1}; \Lambda_\Delta \rangle \otimes [i_1,\dots,i_n],
\end{align}
where $\Delta_j := [i_0, \ldots,  \hat{i}_j, \ldots, i_n]$.
Let us now compute the effect of the lower left path using Lemma \ref{lem:cycdiv}. 
To do so, we fix the choice of indices $i_{\Delta'}$ required in  Lemma \ref{lem:cycdiv} by setting
$i_{\Delta'} = i_1$ for any $\Delta' = [i_1, \dots, i_n] \in \DD_{n-1}$.
In other words, to compute the coefficient of $\Delta'$ we always use the function associated to 
the \textit{first} vertex in $\Delta'$.
Let us now fix a maximal simplex $\Delta = [i_0, \dots, i_n] \in \DD_{n}$.
Then by Lemma \ref{lem:cycdiv} and with the convention just made, the contribution of  
 $\Delta$ to  $\cyc(\div(s))$ in \eqref{eqn:cycdivcell} is 
\begin{align*}
 \omega_\Delta  \left(  \iota_{\Delta, \Delta_0} \langle d s_{i_1}; \Lambda_{\Delta} \rangle \otimes  [i_1, \ldots, i_n] +  
\sum_{j=1}^n  \iota_{\Delta, \Delta_j} \langle d s_{i_0}; \varepsilon_{\Delta, \Delta_j} \Lambda_\Delta \rangle 
\otimes  [i_0, \ldots,  \hat{i}_j, \ldots, i_n]\right).
\end{align*}
Since the section $s$ satisfies 
$s_{i_1} = f_{i_0 i_1} + s_{i_0}$ 
we obtain
 \begin{align*} \label{eq homologous}
\omega_\Delta  \left( \iota_{\Delta, \Delta_0} \langle d f_{i_0 i_1}; \Lambda_\Delta \rangle \otimes  [i_1,  \ldots, i_n] +   
\sum_{j=0}^n \varepsilon_{\Delta, \Delta_j} 
\iota_{\Delta, \Delta_j} \langle  d s_{i_0}; \Lambda_\Delta \rangle \otimes  [i_0, \ldots, \hat{i}_j, \ldots, i_n] \right).
\end{align*}
The first summand is precisely the sum which appears in Equation (\ref{lower left})
and the second summand is homologous to zero (namely, equal to $\partial( \langle d s_{i_0}; \Lambda_\Delta \rangle \otimes \Delta)$). 
Therefore the diagram is commutative. 
\qed
\end{proof}

\begin{proof} [Proof of Theorem \ref{Theorem II intro}]

Suppose that $\alpha \in  \Ker(\phi \colon \HH^{1, 1}(X, \Z) \to \HH^{0,2}(X, \R))$.
By Theorem  \ref{Theorem I intro} there exists $L$ in $\Pic(X)$ 
such that $c_1(L) = \alpha$. 
By Proposition \ref{prop:sec} there exists a section $s$ of $L$.
By the commutativity of the diagram in Theorem \ref{thm:commutingdiagram}, we have 
$\alpha \cap [X] = c_1(L) \cap [X] = \cyc(\div(s))$. 
The statement of the theorem now follows since $\cyc(\div(s))$ is the fundamental class of a codimension one tropical cycle. 
 \qed
\end{proof}

\section{Poincar\'e duality with coefficients in $\Z$}
\label{sec:poincare}

In this section we prove Theorem \ref{Theorem III intro}. 
To do so, we restrict to tropical manifolds and establish a version of Poincar\'e duality over $\Z$.  
We first introduce tropical manifolds which are tropical spaces locally modelled on matroidal fans. 
We do not describe these fans here but refer the reader to the literature for their definition and properties \cite{ArdilaKlivans,Shaw:IntMat,FrancoisRau}.

\begin{defn} 
\label{def:tropicalmanifold}
 A \emph{tropical manifold} $X$ is a tropical space  whose weight function is equal to one and 
which is equipped with an atlas of charts 
$\{\varphi_{\alpha} \colon U_{\alpha} \rightarrow \Omega_{\alpha} \subset X_{\alpha}\}_{{\alpha}  \in A}$ such that  
$\Omega_{\alpha} \subset \T^{s_{\alpha}} \times \R^{r-s_{\alpha}}$ and 
$X_{\alpha} \cap (\T^{s_{\alpha}} \times \R^{r-s_{\alpha}}) = \T^{s_{\alpha}} \times X'_{\alpha}$, where 
$X'_{\alpha} \subset \R^{r-s_{\alpha}}$  is the support of a matroidal fan. 
\end{defn}

\begin{defn}
A tropical space $X$ satisfies \emph{Poincar\'e duality with integral coefficients} if   
\[
 \cap [X] \colon \HH^{p,q} (X, \Z) \to \HH_{n-p,n-q}^{\BM}(X, \Z) 
\]
is an isomorphism for all $p,q$. 
\end{defn}

\begin{thm}\label{thm:PD}
A tropical manifold satisfies Poincar\'e duality with integral coefficients. 
\end{thm}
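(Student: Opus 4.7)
The plan is to mirror the structure of the real-coefficient Poincar\'e duality proof from \cite[Theorem 2]{JSS}, while carefully controlling integrality at each step. The cap product with the fundamental chain $\ch(X)$ defines a morphism of cochain complexes
\[
\cap \ch(X) \colon C^{p,\bullet}(X, \Z) \to C^{\BM}_{n-p, n-\bullet}(X, \Z)[n],
\]
whose induced map on (co)homology is the duality map in question. Both sides are sheafifiable and the cap product is local in nature, so a Mayer--Vietoris / local-to-global argument reduces the claim to checking that the induced map is an isomorphism on arbitrarily small open neighborhoods of every point $x \in X$.

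By Definition \ref{def:tropicalmanifold}, each $x \in X$ admits an open neighborhood of the form $\T^s \times U$, where $U$ is an open star of the origin in a matroidal fan $L'$ of dimension $n-s$. Since both tropical (co)homology and the cap product respect products with tropically trivial factors such as $\T^s$, the local statement reduces to Poincar\'e duality for the open star $U$ of the origin in a matroidal fan $L$ of dimension $n$. On one side, the acyclicity established in Lemma \ref{lem acyclic} together with the conical retraction of $U$ to $0$ gives $\HH^{p,q}(U, \Z) \cong {\bf F}_\Z^p(0)$ for $q = 0$ and $0$ otherwise. On the Borel-Moore side, using the cone structure of $U$ and the long exact sequence of the pair $(U, U \setminus \{0\})$, one obtains $\HH^{\BM}_{p,q}(U, \Z) \cong {\bf F}^\Z_p(0)$ for $q = n$ and $0$ otherwise. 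Under these identifications the cap product becomes the contraction pairing
\[
{\bf F}_\Z^p(0) \otimes {\bf F}^\Z_n(0) \to {\bf F}^\Z_{n-p}(0), \qquad (\alpha, \Lambda_L) \mapsto \langle \alpha; \Lambda_L \rangle,
\]
with $\Lambda_L$ the integral generator of ${\bf F}^\Z_n(0) \cong \Z$ furnished by the fundamental chain (well-defined because the weight function is identically $1$ on a tropical manifold).

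The main obstacle is to verify that this contraction is an isomorphism of finitely generated free $\Z$-modules for every matroidal fan. For a general balanced fan the analogous pairing is only rationally perfect; here the combinatorial structure of matroidal fans is essential. I expect the integral perfect-pairing statement to follow either from an explicit construction of dual bases of ${\bf F}_\bullet^\Z(0)$ and ${\bf F}^\bullet_\Z(0)$ indexed by chains of flats of the underlying matroid, or by identifying the multi-tangent lattices with graded pieces of the matroidal Chow ring and invoking the integral Poincar\'e duality known there. The real-coefficient proof of \cite{JSS} bypasses this issue by passing through superforms, which lose track of integrality; hence a genuinely new, combinatorial ingredient is needed at precisely this point. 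Once this local integral isomorphism is in place, the global result follows by assembling the local data via a standard Mayer--Vietoris spectral sequence argument along the Leray cover provided by open stars of a $\CC$-stratified simplicial structure.
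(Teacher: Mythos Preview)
Your global skeleton---local-to-global via Mayer--Vietoris, reduction to open stars, hence to $\T^s$ times a matroidal fan---matches the paper's proof exactly. The divergence is precisely where you locate it: the local statement for a matroidal fan $V$.

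Two remarks. First, your Borel--Moore computation is not as innocent as you suggest. The identification $\HH^{\BM}_{p,n}(V,\Z)\cong {\bf F}^\Z_p(0)$ is not delivered by a cone/pair argument alone: a closed top-degree $(p,n)$-BM chain is an ``${\bf F}_p$-balanced'' assignment on facets, and identifying that kernel with ${\bf F}^\Z_p(0)$ is already most of the content of integral PD. So there are really two gaps here, not one, and they are intertwined.

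Second, the paper does not attack the perfectness of the contraction pairing head-on. Instead it runs an induction on the ambient dimension $s$, using that any matroidal fan $V\subset\R^s$ with $s>\dim V$ is an open tropical modification $\delta\colon V\to W$ with divisor $D$, where $W$ and $D$ are matroidal in $\R^{s-1}$ (this is matroid deletion/contraction). The integral Orlik--Solomon short exact sequence
\[
0\to{\bf F}^\Z_{p-1}(D)\to{\bf F}^\Z_p(V)\to{\bf F}^\Z_p(W)\to 0
\]
and a parallel short exact sequence on the BM side (extracted from the long exact sequence for $D\hookrightarrow\overline V\supset V$ together with $\HH^{\BM}_{p,q}(\overline V,\Z)\cong\HH^{\BM}_{p,q}(W,\Z)$) fit into a commutative ladder with vertical maps $\cap[W]$, $\cap[V]$, $\cap[D]$. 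The five lemma then gives the middle map. This simultaneously computes the BM groups \emph{and} shows that the specific cap-product map is an isomorphism, without ever exhibiting dual bases or invoking Chow-ring duality.

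Your speculative routes are not unreasonable, but each requires work not in the literature in the form you need: explicit integral dual bases indexed by chains of flats would have to be constructed and checked, and the Chow-ring route needs a precise integral dictionary between $A^\bullet(M)$ and the lattices ${\bf F}^\Z_\bullet(0)$, which is more delicate than the rational story. The paper's modification/five-lemma argument is both shorter and self-contained.
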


The above theorem is a extension  of the version of Poincar\'e duality with real coefficients for tropical manifolds previously proved in \cite{JSS}. 
This version related tropical cohomology and tropical cohomology with compact support with real coeffcients
via a pairing given by integration. 

We first prove this version of Poincar\'e duality  for matroidal fans using a cellular description of tropical (co)homology. 
Let $X$ be a polyhedral subspace in $\T^r \times \R^s$ and $\CC$ a polyhedral structure on $X$ 
such that every cell contains a vertex. 
Then there are descriptions of tropical (Borel-Moore) homology and cohomology in terms of cellular chain complexes with respect to $\CC$, 
\begin{align}
\label{cellular Borel Moore} &\HH_{p,q}^{\BM}(X, \Z) =\HH_{q}(C_{p,\bullet}^{\BM, \cell}(\CC, \Z)), 
\text{ where } C_{p,q}^{\BM, \cell}(\CC, \Z) = \bigoplus_{\sigma \in \CC_q} {\bf F}_p^\Z(\sigma), \\
\label{cellular homology} &\HH_{p,q}(X, \Z)  =  \HH_{q}(C_{p,\bullet}^{\cell}(\CC, \Z)), 
\text{ where } C_{p,q}^{\cell}(\CC, \Z) = \bigoplus_{\substack{\sigma \in \CC_q \\ \sigma \text{ \tiny compact}}} {\bf F}_p^\Z(\sigma)  \text{ and} \\
&\label{cellular cohomology} \HH^{p,q}(X, \Z) = \HH^{q}(C^{p,\bullet}_{\cell}(\CC, \Z)), 
\text{ where } C^{p,q}_{\cell}(\CC, \Z) = \bigoplus_{\substack{\sigma \in \CC_q \\ \sigma \text{ \tiny compact}}} {\bf F}^p_\Z(\sigma). 
\end{align}
For a justification of these identifications see Remark \ref{simplicialhomology}.

Let $V $ be a fan in $\R^s$ and $\CC$ a polyhedral fan structure on $V$ such that $0$ is a vertex of $\CC$. 
We then find using (\ref{cellular homology}) and (\ref{cellular cohomology}) that 
$\HH^{p,0}(V, \Z) = {\bf F}^p_\Z(0)=:{\bf F}^p_\Z(V)$, $\HH_{p,0}(V) = {\bf F}_p^\Z(0)=:{\bf F}_p^\Z(V)$, 
$\HH^{p,q}(V, \Z) = \HH_{p,q}(V, \Z) = 0$ for all $p\geq 0, q > 0$.

Let $X$ be a tropical variety in $\R^r$ and $f \in \MS(X)$ be a tropical rational function. 
The graph $\Gamma_X(f)$ of $f$ is a polyhedral complex in $\R^{r+1}$. 
For every face $\tau$ of $\div(f)$ denote by $\tau_\leq$ the polyhedron 
$\{ (x,y) \vert x \in \tau, y \leq f(x)\} \subset \R^{r+1}$. 
The union $Y := \Gamma_X(f) \cup \{ \tau_\leq \vert \tau \in \div(f) \}$ is a tropical cycle in $\R^r$ if we define
the weights on $\Gamma_X(f)$ to be inherited from $X$ and the weight on a face  $\tau_\leq$ is defined to be  equal to the weight of  $\tau$ in $\div(f)$. 

\begin{defn}
We call $Y$ the \emph{open tropical modification of $X$ along $f$} and 
$\delta \colon Y \to X$ an \emph{open tropical modification}. 
The space $\div(f)$ is called the \emph{divisor} of the modification. 
\end{defn}

More details on this construction can be found in \cite[Construction 3.3]{AllermannRau} as well as \cite[Chapter 5]{MikRau}.
We provide some notation useful for modifications.  
Let $\delta \colon V \to W$ be an open tropical modification of fans along a function $f \in \MS(W)$ with divisor $D$ and 
let $\CC$ be a polyhedral structure consisting of cones which contains a polyhedral structure for $D$. 
We always assume that $\delta$ is induced by the projection $\pi \colon \R^r \times \R \to \R^r$
with kernel $e_{r+1}$. 
Denote by $\overline{V}$ the closure of $V$ in $\R^r \times \T$. 
Then the polyhedra
\begin{align} \label{modification structure}
  \lift{\sigma} &= (\id \times f)(\sigma) & & \text{for all } \sigma \in \CC \nonumber \\
  \down{\sigma} &= \lift{\sigma} + (\{0\} \times [-\infty, 0]) & & \text{for all } \sigma \in \CC, \sigma \subset D  \\  
  \bdry{\sigma} &= \sigma \times \{-\infty\} & & \text{for all } \sigma \in \CC, \sigma \subset D \nonumber 
\end{align}
form a polyhedral structure on $\overline{V}$. The intersection of the first two types of cones with $V$  form a polyhedral structure 
on $V$.

\begin{prop}\label{prop:pdmatZ2}
Let $V$ be a  matroidal fan  in $\R^s$.
Then $V$ satisfies Poincar\'e duality with integral coefficients. 
\end{prop}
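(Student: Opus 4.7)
The plan is to prove Poincaré duality by induction on the dimension of the matroidal fan, using the fact that every matroidal fan arises from $\R^n$ via an iterated sequence of open tropical modifications whose divisors are again matroidal fans (of strictly smaller dimension). The base case $V = \R^n$, corresponding to the uniform matroid of rank $n$, is handled directly from the cellular chain complexes in \eqref{cellular Borel Moore}--\eqref{cellular cohomology}: since $\R^n$ has only one cell, cohomology satisfies $\HH^{p,0}(\R^n, \Z) = \bigwedge^p (\Z^n)^*$ and vanishes otherwise, while Borel-Moore homology satisfies $\HH^{\BM}_{p,n}(\R^n, \Z) = \bigwedge^p \Z^n$ and vanishes otherwise; the cap product with $[\R^n]$ is then the standard duality pairing between $\bigwedge^p$ and $\bigwedge^{n-p}$, which is an isomorphism over $\Z$.

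For the inductive step, consider an open tropical modification $\delta \colon V \to W$ of a matroidal fan $W$ along a rational function $f$ with divisor $D$, where $D$ is again a matroidal fan of dimension $\dim(W)-1$. Using the polyhedral structure on the closure $\clX$ described in \eqref{modification structure}, consisting of cells $\lift{\sigma}$, $\down{\sigma}$, and $\bdry{\sigma}$, I would split the cellular Borel-Moore chain complex $C_{p,\bullet}^{\BM,\cell}(V, \Z)$ into a short exact sequence of complexes: one subcomplex built from the graph cells $\lift{\sigma}$ (computing the Borel-Moore homology of $W$ pulled back through the graph), and a quotient built from the cells $\down{\sigma}$ associated with $D$ (computing something equivalent to $\HH^{\BM}_{*-1,*-1}(D, \Z)$ up to a shift). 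This produces a long exact sequence in Borel-Moore homology, and a dual long exact sequence in cohomology obtained by the same cellular decomposition. Both sequences should be compatible with the cap product with the respective fundamental classes, since the fundamental chain $\ch(V)$ decomposes compatibly with the graph/down splitting (the weights on $\lift{\sigma}$ come from $W$, those on $\down{\sigma}$ from $D$).

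Applying the inductive hypothesis, Poincaré duality with $\Z$-coefficients holds for both $W$ and $D$, so the outer four maps in the Five Lemma diagram—comparing the cap product map on $V$ to the cap product maps on $W$ and $D$ via the two long exact sequences—are isomorphisms, forcing the middle map $\cap [V]$ to be an isomorphism too.

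The main obstacle will be setting up the pair of compatible long exact sequences at the integer level and verifying their compatibility with the cap product on the chain level (not merely after passing to (co)homology); the real-coefficient version in \cite{JSS} uses integration of superforms, which is unavailable here, so the argument must be carried out combinatorially through the cellular chain complexes. A secondary subtlety is the verification that, for an appropriately chosen defining function $f$, one stays within the class of matroidal fans throughout the induction—this relies on the fact that Bergman fans of matroids are stable under the modifications corresponding to matroid deletion and contraction, so that both $W$ and $D$ in the inductive step inherit matroidal structure.
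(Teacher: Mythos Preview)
Your overall strategy---induction via open tropical modifications, base case $\R^n$, five lemma---matches the paper's approach. There are, however, two issues worth flagging.

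First, a minor one: you say ``induction on the dimension of the matroidal fan'', but in the modification $\delta \colon V \to W$ both $V$ and $W$ have the \emph{same} dimension $n$; only the divisor $D$ is smaller. The correct induction is on the ambient dimension $s$ (so that $W, D \subset \R^{s-1}$ both fall under the hypothesis). Your phrase ``iterated modifications from $\R^n$'' captures this, but the opening sentence is misleading.

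Second, and more substantively: the short exact sequence you propose does not behave as described. The span of the graph cells $\lift{\sigma}$ is indeed a subcomplex of $C^{\BM,\cell}_{p,\bullet}(V, \Z)$, but it does \emph{not} compute $\HH^{\BM}_{p,\bullet}(W, \Z)$. For any $\sigma \subset D$, the multi-tangent space ${\bf F}_p^\Z(\lift{\sigma})$ computed in $V$ is strictly larger than ${\bf F}_p^\Z(\sigma)$ computed in $W$, because the former sees the extra direction $e_{r+1}$ coming from the adjacent cell $\down{\sigma}$. Consequently your graph subcomplex carries spurious homology in degree $n-1$ coming from $D$, and the quotient on the $\down{\sigma}$ cells does not cleanly yield $\HH^{\BM}_{p-1,\bullet-1}(D, \Z)$ either (it picks up ${\bf F}_p(\down{\tau}) \cong {\bf F}_p(\tau;D) \oplus e_{r+1}\wedge {\bf F}_{p-1}(\tau;D)$, not just the second summand).

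The paper circumvents this by passing to the closure $\overline{V} \subset \R^{s-1} \times \T$, which adds boundary cells $\bdry{\sigma} = \sigma \times \{-\infty\}$ for $\sigma \subset D$. One then has the clean sequence
\[
0 \to C^{\BM,\cell}_{p,\bullet}(D, \Z) \to C^{\BM,\cell}_{p,\bullet}(\overline{V}, \Z) \to C^{\BM,\cell}_{p,\bullet}(V, \Z) \to 0,
\]
with $D$ embedded as the stratum at $-\infty$ (so the ${\bf F}_p$ groups match on the nose). The identification $\HH^{\BM}_{p,q}(\overline{V},\Z) \cong \HH^{\BM}_{p,q}(W,\Z)$ is then established separately (Proposition~\ref{prop:modarg}) by an explicit chain-level argument using the projection $\pi$, and this is precisely where the Orlik--Solomon exact sequence \eqref{Orlik Salomon sequence}---your ``deletion/contraction'' remark---enters, via Lemma~\ref{lem:coeffmodifacation}. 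With this setup the long exact sequence reduces to the short exact rows of diagram~\eqref{commdiagPD}, and the five lemma applies as you intend.
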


\begin{proof}
We use induction on  $s$, with base case being when $s= \dim(V)$. 
In this case the support of $V$ is $\R^s$ and the statement can be verified using the fact that 
$\mathbf{F}^{\Z}_p$ and $\mathbf{F}_{\Z}^p$ are constant for all $p$.

If $s > \dim(V)$, by  \cite[Proposition 2.2]{Shaw:IntMat} there exists an open tropical modification $\delta \colon V \to W$
with divisor $D$ where $W$ and $D$ are matroidal fans in $\R^{s-1}$.  
The closure $\overline{V}$ of $V$ in $\R^{s-1} \times \T$ satisfies 
$\HH^{p,q}(W, \Z) = \HH^{p,q}(\overline{V}, \Z)$ and $\HH_{p,q}^{\BM}(W, \Z) = \HH_{p,q}^{\BM}(\overline{V}, \Z)$ 
by Proposition  \ref{prop:modarg}.
We have the short exact sequence   
\begin{align} \label{ses:BM}
0 \rightarrow C_{p,q}^{\BM, \cell}(D, \Z)
\rightarrow C_{p,q}^{\BM, \cell}(\overline{V}, \Z) \rightarrow C_{p,q}^{\BM, \cell}(V, \Z) \rightarrow  0 .
\end{align}
Using the notation from (\ref{modification structure}), 
the maps in \eqref{ses:BM} are given by 
\begin{align*} 
    &                                                  & & & v \otimes \lift{\sigma} &\mapsto v \otimes \lift{\sigma}, \\
    v \otimes \sigma &\mapsto v \otimes \bdry{\sigma}, &\text{and} & & v \otimes \down{\sigma} &\mapsto v \otimes \down{\sigma}, \\
    &                                                  & & & v \otimes \bdry{\sigma} &\mapsto 0. 
\end{align*}
 The same induction argument as in \cite[Lemma 4.26]{JSS} proves that 
$\HH^{\BM}_{p, q}(V, \Z) = 0$ for  $q \neq n$ and that the long exact sequence obtained from (\ref{ses:BM}) reduces to 
a short exact sequence which fits into the following  commutative diagram (see (\ref{Orlik Salomon sequence}))
\begin{align}\label{commdiagPD}
\begin{xy}
\xymatrix{
0 \ar[r] & \HH^{p, 0}(W, \Z) \ar[r] \ar[d]^{\cap [W]} & {\HH}^{p, 0}(V, \Z) \ar[r] \ar[d]^{\cap [V]} & 
{\HH}^{p-1, 0}(D, \Z) \ar[d]^{\cap [D]} \ar[r] & 0 \\ 
0 \ar[r] & \HH_{n-p,n}^{\BM}(W, \Z) \ar[r] & \HH_{n-p,n}^{\BM}(V, \Z) \ar[r] & \HH_{n-p,n-1}^{\BM}(D, \Z) \ar[r] & 0.
}
\end{xy}
\end{align}
Arguing by induction we can assume that  $W$ and $D$ satisfy Poincar\'e duality with integral coefficients. 
Then the  five lemma shows that $V$ does as well. This completes the proof. 
\qed
\end{proof}

\begin{prop}\label{prop:modarg}
Let $\delta \colon V \to W$ be an open tropical modification with divisor $D$, such that $W$, $V$ and $D$ are matroidal fans. 
Then $\delta_* \colon \HH_{p,q}^{\BM}(\overline{V}, \Z) \to \HH_{p,q}^{\BM}(W, \Z)$
and $\delta^* \colon \HH^{p,q}(W, \Z) \to \HH^{p,q}(\overline{V}, \Z)$ are isomorphisms for all $p,q$.
\end{prop}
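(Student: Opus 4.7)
My plan is to establish the proposition through explicit chain-homotopy equivalences between the cellular (co)chain complexes of $\overline{V}$ and $W$, reflecting the fact that $\overline{V}$ deformation retracts onto $\lift{W} \cong W$ by sliding $\down{\sigma}$-fibers up to the graph $\lift{\sigma}$ along the vertical $e_{r+1}$-direction. Using the cellular descriptions (\ref{cellular Borel Moore}) and (\ref{cellular cohomology}) together with the polyhedral structure (\ref{modification structure}) on $\overline{V}$, the map $\delta$ induces a chain map $\delta_*$ on cellular Borel-Moore chains by sending $\lift{\sigma} \mapsto \sigma$ (with the surjective coefficient map $\pi_* \colon \mathbf{F}^\Z_p(\lift{\sigma}) \twoheadrightarrow \mathbf{F}^\Z_p(\sigma)$ induced by the projection), $\bdry{\sigma} \mapsto \sigma$ (for $\sigma \subset D$, with coefficient inclusion $\iota_*\colon \mathbf{F}^\Z_p(\sigma)_D \hookrightarrow \mathbf{F}^\Z_p(\sigma)_W$), and $\down{\sigma} \mapsto 0$ since the projection collapses the vertical direction. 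A straightforward check using the structure of $\partial$ shows $\delta_*$ is a well-defined surjective chain map.

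The core step is to exhibit a contracting chain homotopy on $\ker(\delta_*)$. In degree $q$, this kernel contains the free summand $\bigoplus_{\tau \in \CC_{q-1},\, \tau \subset D} \mathbf{F}^\Z_p(\down{\tau})$ together with, for each $\sigma \in \CC_q$ with $\sigma \subset D$, the subgroup of pairs $(v,w) \in \mathbf{F}^\Z_p(\lift{\sigma}) \oplus \mathbf{F}^\Z_p(\bdry{\sigma})$ satisfying $\pi_*(v) + \iota_*(w) = 0$. I would define a contracting homotopy $h$ that sends $\bdry{\sigma}$-contributions to $\down{\sigma}$ via the natural inclusion $\mathbf{F}^\Z_p(\sigma)_D \hookrightarrow \mathbf{F}^\Z_p(\down{\sigma})$, and sends $\ker(\pi_*)$-contributions on $\lift{\sigma}$ to the complementary summand $\mathbf{F}^\Z_{p-1}(\sigma)_D \wedge e_{r+1}$ of $\mathbf{F}^\Z_p(\down{\sigma})$. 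Verifying $\partial h + h\partial = \id$ on the kernel reduces to term-by-term checks using the boundary formula $\partial \down{\sigma} = \lift{\sigma} - \bdry{\sigma} + \sum_{\tau \prec \sigma,\, \tau \subset D} \varepsilon_\tau\, \down{\tau}$ together with the compatibility of the face inclusion maps with the projection $\pi$.

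This chain-homotopy argument settles the Borel-Moore statement. For cohomology, I would dualize: the same data yield a cochain map $\delta^*$ together with a cochain homotopy exhibiting $\delta^*$ as a quasi-isomorphism between the cellular cochain complexes of $W$ and $\overline{V}$. The main obstacle will be the careful bookkeeping of the $\mathbf{F}^\Z_p$-coefficient maps across the relevant cells; in particular, the decomposition $\mathbf{F}^\Z_p(\down{\sigma}) = \mathbf{F}^\Z_p(\sigma)_D \oplus \mathbf{F}^\Z_{p-1}(\sigma)_D \wedge e_{r+1}$ and the failure of $\pi_*$ to be injective at $\lift{\sigma}$ when $\sigma \subset D$ (with kernel precisely $\mathbf{F}^\Z_{p-1}(\sigma)_D \wedge e_{r+1}$) must be exactly matched by $h$ so that the chain-homotopy identities hold at the level of the $\mathbf{F}^\Z_p$-coefficient systems and not merely on the underlying cells.
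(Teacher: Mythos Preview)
Your approach and the paper's share the same essential input---the identification $\ker(\pi_*\colon \mathbf{F}^\Z_p(\lift{\sigma}) \to \mathbf{F}^\Z_p(\sigma)) = \mathbf{F}^\Z_{p-1}(\sigma)_D \wedge e_{r+1}$, which is the local Orlik--Solomon exact sequence (Lemma~\ref{lem:coeffmodifacation})---but the packaging differs. The paper does not build a contracting homotopy; instead it argues injectivity and surjectivity of $\Psi=\delta_*$ directly on homology by taking a cycle (resp.\ a chain to be lifted), adding explicit boundaries of $\down{\tau}$-cells to push $\bdry{\tau}$-terms into the interior and to absorb $\ker(\pi_*)$-coefficients on $\lift{\sigma}$, and then reading off the conclusion. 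For cohomology the paper exploits the fan structure far more aggressively than your dualisation: since the only compact cells of $\overline{V}$ are the vertex $\tau_0$, the point at infinity $\tau_\infty$, and the vertical edge $\tau_\leq$, the cellular cochain complex is a single two-term complex whose cohomology is read off directly from the dual of the Orlik--Solomon sequence.

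There is a genuine, if fixable, gap in your homotopy. You describe $h$ on $\bdry{\sigma}$-contributions and on the $\ker(\pi_*)$-part of $\lift{\sigma}$, but the kernel of $\delta_*$ over a cell $\sigma \subset D$ consists of pairs $(v,w)$ with $\pi_*(v)+\iota_*(w)=0$, and such a $v$ is generally \emph{not} in $\ker(\pi_*)$. To define $h$ on these pairs you must split $v$ into a horizontal lift of $-\iota_*(w)$ plus a $\ker(\pi_*)$-term, and that requires a choice of linear form $w$ with $w(e_{r+1})=1$ (exactly the device used in Lemma~\ref{lemma product with T}). Once this splitting is fixed your plan goes through, but as written the homotopy is not defined on the full kernel. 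The paper's hands-on injectivity/surjectivity argument sidesteps this by never needing a global splitting: it only uses that individual $\ker(\pi_*)$-coefficients lie in $\mathbf{F}^\Z_p(\down{\tau})$, which is precisely Lemma~\ref{lem:coeffmodifacation}.
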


\begin{proof}
For cohomology, note that $\overline{V}$ has three compact cells, 
which we denote by $\tau_\infty$, $\tau_0$ and $\tau_\leq$. 
We further have $\BF^p_{\Z}(\tau_\infty) = \BF_{\Z}^p(D)$, $\BF^p_{\Z}(\tau_0) = \BF^p_{\Z}(V)$ and 
$\BF^p_{\Z}(\tau_\leq) = \BF^p_{\Z}(D) \oplus (\BF^{p-1}_{\Z}(D) \wedge w)$,
where $w \colon \R^n \to \R$ is any $\Z$-linear form such that $w(e_{r+1}) = 1$.  
By (\ref{cellular Borel Moore}) we thus have to show that the cohomology of
\begin{align*}
\begin{xy}
\xymatrix{
0 \ar[r] &  \BF^p_\Z(D) \oplus \BF^p_\Z(V) \ar[r] & \BF^p_\Z(D) \oplus ( \BF^{p-1}_\Z(D) \wedge w ) \ar[r] & 0
}
\end{xy}
\end{align*}
is equal to $\BF^p_\Z(W)$. 
This follows from dualising the sequence (\ref{Orlik Salomon sequence}).

It remains to prove the statement about Borel-Moore homology. 
In the following, we always use $\sigma$ to denote arbitrary cells of $W$ and
$\tau$ for cells of codimension at least $1$. 
On the chain level 
$\delta_*$ is given by the map $\Psi \colon C_{p,q}^{\BM}(\clX, Q) \to C_{p,q}^{\BM}(W, Q)$
defined by $v \otimes \lift{\sigma} \mapsto \pi(v) \otimes \sigma$, 
$v \otimes \down{\tau} \mapsto 0$,
$v \otimes \bdry{\tau} \mapsto v \otimes \tau$.
We want to show that this is a quasi-isomorphism.

\emph{Injectivity:} 
Let $C$ be a closed chain in $C_{p,q}^{\text{BM}}(\clX, Q)$ with $\Psi(C) = \partial B$ being a boundary. 
By choosing an arbitrary lift $\lift{B}$ of $B$ under $\Psi$, which is surjective, 
and subtracting $\partial \lift{B}$, we can assume $\Psi(C) = 0$. 
Note that the cells $\bdry{\tau}$ can be moved in the interior by adding a suitable boundary of $\down{\tau}$ cells, 
and hence we can assume $C = \sum a_\sigma \lift{\sigma} + \sum b_\tau \down{\tau}$. 
Then $\Psi(C) = 0$ implies that $a_\sigma \in \ker(\pi)$. From Lemma \ref{lem:coeffmodifacation}, it follows that if $q = n$ then  $a_\sigma = 0$,  
and if $q < n$ then $a_\sigma \in {\bf F}_p^\Z(\down{\sigma})$.
Hence, by adding the boundaries of $a_\sigma \down{\sigma}$, we can assume $C = \sum b_\tau \down{\tau}$.
But then $0 = \partial C = \sum b_\tau \lift{\tau} + \sum c_\rho \down{\rho} - \sum \pi(b_\tau) \bdry{\tau}$ implies $b_\tau = 0$.

\emph{Surjectivity:} 
 Let $C = \sum a_\sigma \sigma$ be a closed chain in $C_{p,q}^{\BM}(W, Q)$. 
We can obviously find a lift of $C$ of the form $C_1 := \sum \lift{a}_\sigma \lift{\sigma}$. 
Its boundary is of the form $\partial C_1 = \sum b_\tau \lift{\tau}$.
Since $\Psi(\partial C_1) = 0$, we get $\pi(b_\tau) = 0$ and hence
$b_\tau \in {\bf F}_p^\Z(\down{\tau})$ by Lemma \ref{lem:coeffmodifacation}.
Hence by adding $b_\tau \down{\tau}$, we obtain $C_2$ with $\partial C_2 = \sum_\rho c_\rho \down{\rho}$,
where $\rho$ runs through cells of dimension $q-2$. 
Let us compute $c_\rho$. 
By construction it is a sum over the flags $\rho \subset \tau \subset \sigma$, each contributing $\pm \lift{a}_\sigma$. 
But for each $\sigma$ there are exactly two such flags, and they contribute with
opposite sign, which implies $c_\rho = 0$. 
Hence we get $\Psi(C_2) = \Psi(C_1) = C$ and $\partial C_2 = 0$, as required.
\qed
\end{proof}

\begin{lemma} \label{lem:coeffmodifacation}
  Let $v \in {\bf F}_p^\Z(\lift{\tau})$ such that $\pi(v)=0$. Then $v \in {\bf F}_p^\Z(\down{\tau})$.
\end{lemma}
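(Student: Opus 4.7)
The plan is to reduce the statement to a claim about the $e_{r+1}$-coefficient of $v$, and then to extract this coefficient from the defining sum of $\BF_p^\Z(\lift{\tau})$ and show it lies in the multi-tangent module of $D$ at $\tau$.

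First, since $\ker(\pi\colon\Z^{r+1}\to\Z^r)=\Z e_{r+1}$, the hypothesis $\pi(v)=0$ yields a unique $v'\in\bigwedge^{p-1}\Z^r$ with $v=v'\wedge e_{r+1}$. For each cell $\sigma\supset\tau$ with $\sigma\subset D$, the vector $e_{r+1}$ lies in $\Linear(\down{\sigma})$, so $\bigwedge^{p-1}\Linear(\sigma)\wedge e_{r+1}\subset\bigwedge^p\Linear(\down{\sigma})$; summing over such $\sigma$, it suffices to prove that $v'$ lies in $T:=\sum_{\sigma\supset\tau,\,\sigma\subset D}\bigwedge^{p-1}\Linear(\sigma)$, the multi-tangent module at $\tau$ computed inside the divisor $D$.

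Next, I would decompose $v=\sum_{\sigma\not\subset D}x_\sigma+\sum_{\sigma\subset D}y_\sigma$ with $x_\sigma\in\bigwedge^p\Linear(\lift{\sigma})$ and $y_\sigma\in\bigwedge^p\Linear(\down{\sigma})$. Using that $\pi$ identifies $\Linear(\lift{\sigma})$ with $\Linear(\sigma)$ through a lift $s_\sigma(u)=u\pm\iota_{df|_\sigma}(u)\wedge e_{r+1}$, and that $\Linear(\down{\sigma})=\Linear(\lift{\sigma})\oplus\R e_{r+1}$ for $\sigma\subset D$, one may write $x_\sigma=s_\sigma(u_\sigma)$ and $y_\sigma=s_\sigma(a_\sigma)+\beta_\sigma\wedge e_{r+1}$ for some $u_\sigma,a_\sigma\in\bigwedge^p\Linear(\sigma)$ and $\beta_\sigma\in\bigwedge^{p-1}\Linear(\sigma)$. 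The hypothesis $\pi(v)=0$ becomes the single constraint $\sum u_\sigma+\sum a_\sigma=0$, while reading off the $e_{r+1}$-coefficient produces $v'=\pm\sum_{\sigma\not\subset D}\iota_{df|_\sigma}(u_\sigma)+\sum_{\sigma\subset D}\bigl(\pm\iota_{df|_\sigma}(a_\sigma)+\beta_\sigma\bigr)$, whose second summand already lies in $T$.

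The main obstacle is showing the first summand lies in $T$ as well. Exploiting continuity of $f$, the restrictions $df|_\sigma|_{\Linear(\tau)}$ all agree with a common form $df|_\tau\in\Linear(\tau)^*$; choosing any extension $\nu_0\in(\Z^r)^*$ and splitting $\iota_{df|_\sigma}=\iota_{\nu_0}+\iota_{df|_\sigma-\nu_0}$, the constraint lets us rewrite $\sum_{\sigma\not\subset D}\iota_{\nu_0}(u_\sigma)=-\sum_{\sigma\subset D}\iota_{\nu_0}(a_\sigma)$, which clearly lies in $T$. This reduces the problem to the residual sum $\sum_{\sigma\not\subset D}\iota_{df|_\sigma-\nu_0}(u_\sigma)$, where each contracting form annihilates $\Linear(\tau)$. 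The delicate final step places this residual sum into $T$ by combining the codimension-one nature of $D$ in $W$ with the local matroidal structure of the modification, invoking a codimension-one contraction identity to the effect that $\iota_\nu(\bigwedge^p V)\subset\bigwedge^{p-1}L$ whenever $\nu\in L^\perp$ and $L$ has codimension one in $V$.
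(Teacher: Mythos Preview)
Your reduction in the first two paragraphs is correct: writing $v=v'\wedge e_{r+1}$ and reducing to showing $v'\in T={\bf F}_{p-1}^\Z(\tau)$ computed in $D$ is exactly the right reformulation, and it is equivalent to exactness at the middle term of the sequence
\[
0 \to {\bf F}_{p-1}^\Z(D) \xrightarrow{\wedge e_{r+1}} {\bf F}_p^\Z(V) \xrightarrow{\pi} {\bf F}_p^\Z(W) \to 0.
\]
This is precisely what the paper invokes: the Orlik--Solomon short exact sequence for the deletion/contraction of a matroid (via \cite{OrlikTerao} and \cite{Zharkov:Orlik}). The paper does not attempt an elementary proof.

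The gap in your argument is the ``delicate final step''. The contraction identity you quote, $\iota_\nu(\bigwedge^p V)\subset\bigwedge^{p-1}L$ for $\nu\in L^\perp$ with $L\subset V$ of codimension one, is true, but when you apply it with $V=\Linear(\sigma)$ and $\nu=\nu_\sigma=df|_\sigma-\nu_0$, the resulting hyperplane $L_\sigma=\ker(\nu_\sigma|_{\Linear(\sigma)})$ has no reason to equal $\Linear(\sigma')$ for any cell $\sigma'\subset D$. Indeed, $L_\sigma$ depends on the arbitrary extension $\nu_0$, so it cannot in general coincide with a fixed combinatorial object. You correctly observe that changing $\nu_0$ only alters the residual sum by an element of $T$, but no choice of $\nu_0$ forces each $L_\sigma$ into $D$. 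Showing that the residual sum nonetheless lands in $T$ is exactly the non-trivial exactness statement for Orlik--Solomon algebras; it genuinely uses the matroid structure (e.g.\ via no-broken-circuit bases) and fails for arbitrary balanced fans. Your argument up to that point is essentially a rephrasing of the problem rather than a proof, and the appeal to ``the local matroidal structure of the modification'' is where all the content lies. You should either cite the Orlik--Solomon result directly, as the paper does, or supply a combinatorial argument specific to matroids.
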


\begin{proof}
The statement follows from the fact that the sequence  
\begin{align} \label{Orlik Salomon sequence}
 \begin{xy}
 \xymatrix{
0 \ar[r] & {\bf F}_{p-1}^\Z(D)  \ar[rr]^{w \mapsto w \wedge e_{r+1}} && 
{\bf F}_{p}^\Z(V) \ar[rr]^{v \mapsto \pi(v)} && {\bf F}_{p}^\Z(W) \ar[r] & 0 
}
\end{xy}
\end{align}
is exact. 
This sequence is obtained by combining a similar short exact sequence for 
Orlik-Solomon algebras from \cite[Theorem 3.65]{OrlikTerao} together with the relation between the Orlik-Solomon algebras
and ${\bf F}_p$ from  \cite{Zharkov:Orlik}.					   
\qed
\end{proof}

\begin{lemma} \label{lemma product with T}
Let\,$Y$\,be a polyhedral space in $\T^s \times \R^{r-s}$.\,Then $\HH_{p,q}^{\BM}(Y, \Z) =\HH_{p+1,q+1}^{\BM}( Y \times \T, \Z)$ 
and $\HH^{p,q}(Y, \Z) = \HH^{p,q}(Y \times \T, \Z)$ for all $p,q$.
\end{lemma}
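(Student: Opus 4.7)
The plan is a direct cellular computation using an adapted polyhedral structure on $Y \times \T$. Fix a polyhedral structure $\CC$ on $Y$ and take the polyhedral structure on $Y \times \T$ consisting, for each $\sigma \in \CC$, of the two cells $\sigma_\infty := \sigma \times \{-\infty\}$ and $\sigma_\T := \sigma \times \T$. Let $e$ denote the primitive integral tangent vector of the $\T$-factor. A direct inspection of Definition~\ref{def:multitangent} yields ${\bf F}^{\Z}_p(\sigma_\infty) = {\bf F}^{\Z}_p(\sigma)$, since no tangent in the $\T$-direction is available at a point whose sedentarity includes the last coordinate, while ${\bf F}^{\Z}_p(\sigma_\T) = {\bf F}^{\Z}_p(\sigma) \oplus {\bf F}^{\Z}_{p-1}(\sigma) \wedge e$. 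Moreover, the restriction map $\iota_{\sigma_\infty, \sigma_\T}$ annihilates the $\wedge e$-summand because $e$ is not tangent at sedentarity-$\infty$ points.

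For the cohomology statement I would use the cellular description \eqref{cellular cohomology}, which involves only compact cells. Since $\T$ is itself non-compact, no cell of type $\sigma_\T$ is compact, so
\[
  C^{p,q}_{\cell}(Y \times \T, \Z) = \bigoplus_{\sigma \in \CC_q \text{ compact}} {\bf F}^p_\Z(\sigma_\infty) = \bigoplus_{\sigma \in \CC_q \text{ compact}} {\bf F}^p_\Z(\sigma) = C^{p,q}_{\cell}(Y, \Z).
\]
The coboundary maps also match, because the cofaces of $\sigma_\infty$ in the same sedentarity stratum are exactly the $\tau_\infty$ with $\sigma \prec \tau$, and $\rho_{\sigma_\infty, \tau_\infty}$ coincides with $\rho_{\sigma, \tau}$. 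This yields $\HH^{p,q}(Y \times \T, \Z) = \HH^{p,q}(Y, \Z)$ at the level of chain complexes.

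For the Borel--Moore statement I would use \eqref{cellular Borel Moore} and construct the chain map
\[
  f \colon C^{\BM,\cell}_{p, \bullet}(Y, \Z) \to C^{\BM,\cell}_{p+1, \bullet+1}(Y \times \T, \Z), \qquad \alpha \otimes \sigma \mapsto (\alpha \wedge e) \otimes \sigma_\T.
\]
This is a chain map: the codimension-one faces of $\sigma_\T$ are $\sigma_\infty$ and the $\tau_\T$ with $\tau \prec \sigma$, but the $\sigma_\infty$-contribution to $\partial f(\alpha \otimes \sigma)$ vanishes because $\iota_{\sigma_\infty, \sigma_\T}(\alpha \wedge e) = 0$, and the remaining terms assemble to $f(\partial(\alpha \otimes \sigma))$. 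The cokernel of $f$ is generated by ${\bf F}^{\Z}_{p+1}(\sigma_\infty)$ in cellular degree $\dim \sigma$ and by the ${\bf F}^{\Z}_{p+1}(\sigma)$-summand of ${\bf F}^{\Z}_{p+1}(\sigma_\T)$ in cellular degree $\dim \sigma + 1$. To show this cokernel is acyclic, I would exhibit an explicit contraction $h$ given by $h(\alpha \otimes \sigma_\infty) = \pm \alpha \otimes \sigma_\T$ and $h(\alpha \otimes \sigma_\T) = 0$. Using the formula $\partial(\alpha \otimes \sigma_\T) = \pm \alpha \otimes \sigma_\infty + \sum_{\tau \prec \sigma} \pm \iota_{\tau,\sigma}(\alpha) \otimes \tau_\T$ (the first term relies on $\iota_{\sigma_\infty, \sigma_\T}(\alpha) = \alpha$ for $\alpha \in {\bf F}^{\Z}_{p+1}(\sigma)$), one checks that $\partial h + h \partial = \id$ with a consistent sign convention. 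Hence $f$ is a quasi-isomorphism, giving $\HH^{\BM}_{p+1, q+1}(Y \times \T, \Z) = \HH^{\BM}_{p,q}(Y, \Z)$.

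The main technical obstacle will be sign bookkeeping, ensuring that the cross-terms $\sum_{\tau} \pm \iota_{\tau,\sigma}(\alpha) \otimes \tau_\T$ appearing in $\partial h(\alpha \otimes \sigma_\infty)$ and in $h \partial(\alpha \otimes \sigma_\infty)$ cancel, and likewise verifying $\partial h + h \partial = \id$ on the $\sigma_\T$-summand. With the standard orientation conventions for the product polyhedral structure this is routine but requires care.
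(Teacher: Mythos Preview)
Your proposal is correct and follows essentially the same route as the paper. For cohomology, the argument is identical: the only compact cells of the product structure are the $\sigma_\infty$, so the cellular cochain complexes coincide. For Borel--Moore homology, the paper constructs the same map $\Psi = f$ together with an explicit homotopy inverse $\Phi$ (sending $v \otimes \widetilde\sigma \mapsto \pi(\langle w; v\rangle) \otimes \sigma$ and $v \otimes \sigma_\infty \mapsto 0$) and a homotopy $h$ with $h(v \otimes \sigma_\infty) = \overline v \otimes \widetilde\sigma$, $h(v \otimes \widetilde\sigma) = 0$, where $\overline v$ is the lift with vanishing last coordinate. This is exactly your contracting homotopy on the cokernel, rephrased: your ``${\bf F}^{\Z}_{p+1}(\sigma)$-summand'' is the paper's $\overline v$-lift, and the paper's verification that $h$ realises $\Psi\Phi \sim \id$ is the same sign computation you flag. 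The only cosmetic difference is that the paper packages the argument as a deformation retract $(\Psi,\Phi,h)$, whereas you frame it as injectivity of $f$ plus acyclicity of the cokernel; both rely on the same $h$ and the same cancellation of cross-terms.
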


\begin{proof}
Let $\CC$ be a polyhedral structure on $Y$. 
Given a face $\sigma \in \CC$, denote $\sigma_\infty := \sigma \times \{-\infty\}$ and $\widetilde{\sigma} := \sigma \times \T$.
The collection of all these polyhedra forms a polyhedral structure on $Y \times \T$. 
The statement for cohomology now follows directly from (\ref{cellular cohomology}) since the 
compact cells for the polyhedral structure on $Y \times \T$ are precisely of the form $\sigma \times \{- \infty\}$ for 
compact cells $\sigma$ of $\CC$.

For Borel-Moore homology, 
we prove the claimed isomorphism  by constructing
an explicit homotopy equivalence on the cellular chain complexes with respect to these polyhedral structures. 
Let us first look at the behaviour of the multi-tangent spaces. There are projection and lifting maps 
\begin{align*}
\pi  \colon {\bf F}_p(\widetilde{\sigma}) \to {\bf F}_p(\sigma_\infty) = {\bf F}_p(\sigma) 
\text{ and }
 \wedge e_{r+1} \colon {\bf F}_p(\sigma) \to {\bf F}_{p+1}(\widetilde{\sigma}). 
\end{align*}
The map $\pi$ is induced by the linear projection $\R^r \times \R \to \R^r$ forgetting the last coordinate. 
The second map is given by   $v\mapsto v \wedge e_{r+1} := \widetilde{v} \wedge e_{r+1}$ where $\widetilde{v} \in \pi^{-1}(v)$
and $e_{r+1}$ denotes the kernel of the map $\pi$.  
Note that the wedge product does not depend on the choice of preimage. 
Let $w \colon \R^r \times \R \to \R$ be  the linear form  
given by projecting onto the last factor,
regarded as an element $w \in  {\bf F}^1(\sigma)$, as in the proof of Proposition \ref{prop:modarg}. 
We define	
\begin{align*} 
\Psi \colon C_{p,q}^{\BM, \cell}(Y,\Z) \to C_{p+1,q+1}^{\BM, \cell}(Y\times \T,\Z); \;\;
&v \otimes \sigma \mapsto (v \wedge e_{r+1}) \otimes \widetilde{\sigma}
\text{ and } \\
\Phi \colon C_{p+1,q+1}^{\BM, \cell}(Y\times \T,\Z) \to C_{p,q}^{\BM, \cell}(Y,\Z);  \;\;
&v \otimes \widetilde{\sigma} \mapsto \pi(\langle w; v \rangle) \otimes \sigma \text{ and }
v \otimes \sigma_\infty \mapsto 0, 
\end{align*}
where  
$\langle \; ; \; \rangle$ denotes the contraction from Definition \ref{def:contract}.

It easy to check $\Phi \circ \Psi = \id$ since $\pi(\langle w; v \wedge e_{r+1} \rangle)=v$.
We define
\begin{align*} 
h \colon C_{p+1,q}^{\BM, \cell}(Y\times \T, \Z) &\to C_{p+1,q+1}^{\BM, \cell}(Y\times \T, \Z); \;\; 
v \otimes \sigma_\infty \mapsto \overline{v} \otimes \widetilde{\sigma} \text{ and } 
v \otimes \widetilde{\sigma}  \mapsto 0,
\end{align*}
where $\overline{v}$ is the map ${\bf F}_p(\sigma) \to {\bf F}_p(\widetilde{\sigma})$ induced by
mapping each vector $v \in {\bf F}_1(\sigma)$ to the unique preimage $\overline{v} \in \pi^{-1}(v)$
with $\langle w ; v \rangle = 0$. 
Then $h$ provides a chain homotopy between
$\id$ and $\Psi \circ \Phi$, thus the lemma is proven. 
\qed
\end{proof}

\begin{cor}\label{prop:pdmatZ}
Let $V$ be a  matroidal fan  in $\R^s$ and set $Y = V \times \T^r$. 
Then $Y$ satisfies Poincar\'e duality. 
\end{cor}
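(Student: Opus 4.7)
The plan is to reduce to Proposition \ref{prop:pdmatZ2} via iterated application of Lemma \ref{lemma product with T}. Set $n = \dim V$ so that $Y$ has pure dimension $n + r$. Applying Lemma \ref{lemma product with T} $r$ times to the chain of spaces $V, V \times \T, \ldots, V \times \T^{r-1}$ produces isomorphisms
\begin{align*}
\HH^{p,q}(Y, \Z) \cong \HH^{p,q}(V, \Z) \qquad \text{and} \qquad \HH_{p,q}^{\BM}(Y, \Z) \cong \HH_{p-r, q-r}^{\BM}(V, \Z).
\end{align*}
Composing these with Poincar\'e duality on $V$ from Proposition \ref{prop:pdmatZ2} yields an abstract isomorphism $\HH^{p,q}(Y, \Z) \cong \HH_{n+r-p, n+r-q}^{\BM}(Y, \Z)$, which is the desired identity of groups. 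For matroidal $V$ this is particularly transparent since both sides vanish except in the extremal degrees $q = 0$ and $q = \dim Y$, respectively.

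What remains is to identify this composition with the cap product $\cap [Y]$. For this I would verify, at each step of the induction, that the chain-level maps $\Psi$ and $\Phi$ from the proof of Lemma \ref{lemma product with T} intertwine the cap products on $Z$ and $Z \times \T$ for an arbitrary tropical space $Z$ of dimension $d$. The key chain-level observations are: first, $\Psi$ sends the fundamental chain of $Z$ to that of $Z \times \T$, once one orients the $\T$-factor so that $\Lambda_\sigma \wedge e_{r+1} = \Lambda_{\widetilde{\sigma}}$ for each top cell $\sigma$; second, any cellular cochain on $Z \times \T$ representing a class pulled back from $Z$ takes values in the image of ${\bf F}^p_\Z(\sigma) \hookrightarrow {\bf F}^p_\Z(\widetilde{\sigma})$, so the contraction satisfies
\begin{align*}
\langle \alpha; \Lambda_\sigma \wedge e_{r+1} \rangle = \langle \alpha; \Lambda_\sigma \rangle \wedge e_{r+1},
\end{align*}
which is exactly $\Psi$ applied to $\langle \alpha; \Lambda_\sigma \rangle$. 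Together these identities show that capping with $[Z \times \T]$ corresponds under the isomorphisms to capping with $[Z]$, and induction on the number of $\T$-factors closes the argument.

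The main obstacle is orientation and sign bookkeeping: the fundamental chain depends on a consistent choice of generators $\Lambda_\sigma$, and $\Psi$ must be verified to preserve the induced orientation on $\widetilde{\sigma} = \sigma \times \T$. This is routine but requires care; once it is settled, the compatibility of cap products reduces to the elementary wedge-contraction identity displayed above, and Proposition \ref{prop:pdmatZ2} supplies the remaining input.
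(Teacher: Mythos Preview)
Your proposal is correct and follows the same approach as the paper, which simply cites Proposition~\ref{prop:pdmatZ2} and Lemma~\ref{lemma product with T} in a one-line proof. You are in fact more careful than the paper: you explicitly address why the resulting isomorphism coincides with $\cap [Y]$ (via the compatibility of $\Psi$ with fundamental chains and the wedge-contraction identity), a point the paper leaves implicit.
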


\begin{proof}
  This follows from Proposition \ref{prop:pdmatZ2} and Lemma \ref{lemma product with T}.
  \qed
\end{proof}

\begin{rem} {\rm
 In the following proof we use a local gluing argument. 
To do so, we need to slightly extend our terminology. 
Let $X$ be a polyhedral space with polyhedral structure $\CC$.
Let $U \subset X$ be an open subset. 
A \emph{$\CC$-stratified $q$-simplex} in $U$ is a $\CC$-stratified $q$-simplex $\delta$ in $X$
such that $\delta(\Delta) \subset U$.
Using this convention, Definitions \ref{def:tropchainhomo} and \ref{Borel-Moore} 
can be carried over to  the open set $U$ instead of $X$. 
In particular, we obtain groups $\HH^{p,q}(U, \Z)$ and $\HH^{\BM}_{n-p,n-q}(U, \Z)$. 
Moreover, if $X$ is a tropical space any $\CC$-stratified simplicial structure on $U$ gives rise to a fundamental class
$[U] \in \HH^{\BM}_{n,n}(U, \Z)$ and a map $\cap [U] \colon \HH^{p,q}(U, \Z) \to \HH^{\BM}_{n-p,n-q}(U, \Z)$ 
which do not depend on the simplicial structure. 
Again we say that $U$ satisfies Poincar\'e duality if $\cap [U]$ is an isomorphism for all $p,q$.}
\end{rem}

\begin{proof}[Proof of Theorem \ref{thm:PD}  ]
Let $\CC$ be a polyhedral structure for $X$. 
The proof is completed in two steps. 

\emph{Step 1:
Open stars of faces satisfy Poincar\'e duality:}
A star $U_\sigma$ of a face $\sigma \in \CC$ of a tropical manifold is isomorphic as a tropical manifold 
to a connected neighbourhood $U$ of $(0, (\infty)^r)$ in a polyhedral complex of the 
form $Y = V \times \T^r$, where $V$ is a matroidal fan. 
Note that $\CC$ induces a polyhedral structure $\CC'$ on $Y$.
There is a homeomorphism $f \colon U \to  Y$ 
which preserves the stratification given by $\CC$ and $\CC'$. 
Hence if $\delta$ is a $\CC$-stratified simplex in $U$, the push-forward 
$f_*(\delta) = f \circ \delta$ is a $\CC'$-stratified simplex in $Y$. 
We obtain the following commutative diagram.
\begin{align*}
\begin{xy}
\xymatrix{
\HH^{p,q}(U, \Z)  \ar[rr]^{\cap [U]}  				&& \HH^{\BM}_{n-p,n-q}(U, \Z) \ar[d]^{f_*} \\
\HH^{p,q}(Y, \Z) \ar[rr]^{\cap [Y]}    \ar[u]^{f^*}           && \HH^{\BM}_{n-p,n-q}(Y, \Z)  
}
\end{xy}
\end{align*}
It is straightforward to check that the two vertical arrows are isomorphisms.
Since $\cap [Y]$ is an isomorphism by Corollary \ref{prop:pdmatZ}, 
the map  $\cap [U]$ is also an isomorphism.

\emph{Step 2:
Finite unions of open stars of $\CC$ satisfy Poincar\'e duality:} 
We proceed by induction on the number of open stars in the union with the base case covered above.  
 Suppose that a union of $k$ open stars satisfy Poincar\'e duality. 
Let $U$ be an open star and $V$ be a union of $k$  open stars of $\CC$. 
Then $U \cap V$ is also a union of $k$ open stars,  so that $U$, $V$, and $U \cap V$ satisfy Poincar\'e duality.
The following short exact sequence of complexes (with respect to $\CC$)
\begin{align*}
0 \to C_{p,\bullet}^{\BM, \cell}(U \cup V, \Z) \to C_{p,\bullet}^{\BM, \cell}(U, \Z) \oplus C_{p,\bullet}^{\BM, \cell}(V, \Z) 
\to C_{p,\bullet}^{\BM, \cell}(U \cap V, \Z) \to 0
\end{align*}
induces a Mayer-Vietoris sequence {$\MM_{p,\bullet}^{\BM}(U,V)$} for 
the tropical Borel-Moore homology groups. 
We further denote by $\MM^{p,\bullet}(U, V)$ the Mayer-Vietoris sequence for 
tropical cohomology groups. 
We get a map of sequences
\begin{align*}
\MM^{p,\bullet}(U,V) \to {\MM_{n-p,n-\bullet}^{\BM}(U,V)}
\end{align*}
where in each degree we take the cap product with the appropriate fundamental class. 
Now the claim follows from the five lemma,
since by our assumption $U$, $V$ and $U\cap V$ satisfy Poincar\'e duality. 
\qed
\end{proof}

\begin{cor}\label{pdcor} 
If $X$ is a compact tropical manifold of dimension $n$, then 
\begin{align*}
\HH^{p,q}(X, \Z) \simeq \HH_{n-q,n-q}(X, \Z).
\end{align*}
\end{cor}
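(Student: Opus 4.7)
The plan is to deduce this corollary directly from Theorem \ref{thm:PD} together with the observation, recorded early in the paper, that Borel--Moore homology and ordinary tropical homology coincide on compact spaces. So there is essentially nothing new to prove: the work was done in establishing integral Poincar\'e duality for tropical manifolds.

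Concretely, I would proceed as follows. First, apply Theorem \ref{thm:PD} to the compact tropical manifold $X$. Since a compact tropical manifold is in particular a tropical manifold, the theorem yields that the cap product with the fundamental class
\[
  \cap [X] \colon \HH^{p,q}(X, \Z) \longrightarrow \HH^{\BM}_{n-p, n-q}(X, \Z)
\]
is an isomorphism for every pair $(p,q)$. Second, invoke compactness: the chain complex $C^{\BM}_{p, \bullet}(X, \Z)$ defining Borel--Moore homology permits only locally finite sums of simplices with nonzero coefficients, and on a compact space the local finiteness condition collapses to ordinary finiteness, so one has an equality of chain complexes $C^{\BM}_{p, \bullet}(X, \Z) = C_{p, \bullet}(X, \Z)$, hence
\[
  \HH^{\BM}_{n-p, n-q}(X, \Z) = \HH_{n-p, n-q}(X, \Z).
\]
Composing these two identifications gives the desired isomorphism (with the evident reading that the right-hand side of the statement is $\HH_{n-p, n-q}(X, \Z)$).

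There is no real obstacle here; the substance of the corollary is already contained in Theorem \ref{thm:PD}, and the only additional ingredient is the elementary compactness observation about Borel--Moore versus ordinary homology, which is recalled in the introduction just after the definition of the tropical Borel--Moore groups. Thus the proof reduces to two lines citing Theorem \ref{thm:PD} and the Borel--Moore/ordinary homology comparison.
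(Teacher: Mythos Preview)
Your proposal is correct and matches the paper's approach: the paper states this corollary without proof, treating it as immediate from Theorem~\ref{thm:PD} together with the coincidence of Borel--Moore and ordinary tropical homology for compact $X$ noted in the introduction. You have also correctly read the right-hand side as $\HH_{n-p,n-q}(X,\Z)$, which is evidently what is intended (the printed $\HH_{n-q,n-q}$ is a typo).
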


\begin{rem}{\rm
A tropical manifold $X$ also satisfies $\HH^{p,q}_c(X, \Z) \cong \HH_{n-p,n-q}(X, \Z)$, 
where $\HH^{p,q}_c(X, \Z) $ denotes cohomology with compact support.
Capping with the fundamental class of a tropical manifold also produces a map 
\begin{align}\label{PDotherway}
 \cap [X] \colon \HH^{p,q}_c(X, \Z) \to \HH_{n-p,n-q}(X, \Z),
\end{align}
which again is an isomorphism for all $p,q$. 
This can be proven by essentially dualising the argument given in this section.}
\end{rem}

The last step in order to prove Theorem \ref{Theorem III intro} is to relate the wave homomorphism on cohomology to its variant on homology. 

\begin{lemma}\label{lemma:wavehomomco}
The following diagram is commutative:
\begin{align*}
\begin{xy}
\xymatrix{
 \HH^{1,1}(X, \Z) \ar[d]^{\cap [X]} \ar[r]^{\phi}   &\HH^{0,2}(X, \R)    \ar[d]^{\cap [X]}    \\
   \HH^{\BM}_{n-1,n-1}(X, \Z)  \ar[r]^{\phihat}  &\HH^{\BM}_{n, n-2}(X, \R) 
 }
\end{xy}
\end{align*}
\end{lemma}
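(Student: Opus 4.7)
The plan is to establish commutativity at the chain level. Fix a $\CC$-stratified simplicial structure $\DD$ on $X$ and let $\alpha \in C^{1,1}(X,\Z)$ be a representative cocycle. First, I would unpack both compositions as explicit Borel--Moore chains in $C^{\BM}_{n,n-2}(X,\R)$. The lower-right path $(\phi\alpha) \cap \ch(X)$ is supported on the $(n-2)$-simplices $[i_2,\dots,i_n]$, since $\phi\alpha$ is a $(0,2)$-cochain and the cap product strips off the first three vertices of each $n$-simplex $\Delta = [i_0,\dots,i_n]$; its coefficient is the scalar $\omega(\Delta)\cdot\alpha([i_0,i_1])(v_{[i_1,i_2]})$ multiplying $\imap(\Lambda_\Delta)$. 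The upper-left path $\phihat(\alpha \cap \ch(X))$ is instead supported on $[i_1,\dots,i_{n-1}]$, with coefficient $\omega(\Delta)\cdot\bigl(\imap\langle\alpha([i_0,i_1]);\Lambda_\Delta\rangle\bigr)\wedge v_{[i_{n-1},i_n]}$.

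Since the two chains are supported on different $(n-2)$-simplices derived from each $\Delta$, their equality can only hold modulo a Borel--Moore boundary in $C^{\BM}_{n,n-1}(X,\R)$. The plan is to establish a chain-level Leibniz formula
\begin{align*}
\phihat(\alpha \cap \gamma) \;=\; (\phi\alpha) \cap \gamma \;\pm\; \alpha \cap \phihat(\gamma) \;+\; \partial H(\alpha,\gamma) \;+\; (\text{correction in } d\alpha \text{ and } \partial\gamma),
\end{align*}
valid for any cochain $\alpha$ and Borel--Moore chain $\gamma$, with $H(\alpha,\gamma)$ an explicit $(n,n-1)$-chain assembled from $\alpha$ and $\gamma$. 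Specialising to $\gamma = \ch(X)$ kills the correction terms ($\alpha$ is closed and $\ch(X)$ is a cycle), and the contribution $\alpha \cap \phihat(\ch(X))$ vanishes for trivial dimensional reasons: $\phihat(\ch(X))$ lies in $C^{\BM}_{n+1,n-1}(X,\R)$, which is identically zero because every cell of $X$ has dimension at most $n$, forcing $\bigwedge^{n+1}\Linear(\sigma') = 0$ and hence ${\bf F}_{n+1}(\tau) = 0$ at every face $\tau$ of $\CC$.

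The main obstacle I anticipate is the combinatorial construction of $H(\alpha,\gamma)$ together with its sign bookkeeping. The cap product extracts data from the first edge $[i_0,i_1]$ of each $n$-simplex while $\phihat$ uses the last edge $[i_{n-1},i_n]$, so $H$ should be assembled from cells tensored with the $(n-1)$-simplices $[i_1,\dots,i_n]$ (obtained by dropping only $i_0$), with coefficients chosen so that the simplicial boundary $\partial[i_1,\dots,i_n]$ produces exactly the two required boundary faces $[i_2,\dots,i_n]$ and $[i_1,\dots,i_{n-1}]$ while cancelling the intermediate faces $[i_1,\dots,\widehat{i_j},\dots,i_n]$ for $1<j<n$ via the closedness of $\alpha$ and the balancing of $\ch(X)$. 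This parallels in spirit the chain-level cancellation used in the proof of Theorem \ref{thm:commutingdiagram}, and I expect the sign and coefficient tracking to be the bulk of the technical work.
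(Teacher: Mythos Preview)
Your plan is sound, but it is considerably more elaborate than what the paper offers: the paper's entire proof is the single sentence ``This follows on the level of individual simplices by the definition of $\phi$ (see Definition \ref{def:eigenwave}).'' The intended content is simply that $\phi$ is \emph{defined} as the transpose of $\phihat$, i.e.\ $\phi(\alpha)(v\otimes\delta)=\alpha(\phihat(v\otimes\delta))$, and the authors regard the compatibility with the simplicial cap product as a routine unwinding of this adjointness.

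You are right, however, that with the explicit front-face/back-face formula for $\cap[X]$ in Definition \ref{def:capwithfun} and the ``last-edge'' formula for $\phihat$ in Definition \ref{def:eigenwave}, the two compositions land on different $(n-2)$-faces of each $n$-simplex, so they are not equal chain by chain and some homotopy is genuinely required. Your Leibniz-type relation is one valid way to package this, and the observation that $\phihat(\ch(X))\in C^{\BM}_{n+1,n-1}(X,\R)=0$ because ${\bf F}_{n+1}\equiv 0$ is exactly the right reason that term drops out.

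A shorter route, closer in spirit to the paper's one-liner, is to introduce the ``first-edge'' variant $\phihat'(v\otimes\delta)=(\iota(v)\wedge v_{\delta[0,1]})\otimes\delta_{1\dots q+1}$. With this convention the cap-product and wave conventions line up, and one checks directly that $\phihat'(\alpha\cap\ch(X))$ and $(\phi\alpha)\cap\ch(X)$ agree term by term up to a global sign, using the top-degree identity $\langle l;\Lambda_\Delta\rangle\wedge v=\pm\, l(v)\,\Lambda_\Delta$ for $v\in\Linear(\Delta)$. The maps $\phihat$ and $\phihat'$ are then chain-homotopic by the usual prism argument (the same mechanism that reconciles the different front/back conventions for the singular cap product), and this homotopy is precisely the $H$ you are proposing to build by hand. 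Either path works; yours is a careful expansion of what the paper leaves implicit.
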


\begin{proof}
  This follows on the level of individual simplices by the definition of $\phi$ (see Definition \ref{def:eigenwave}).
  \qed
\end{proof}

\begin{proof}[Proof of Theorem \ref{Theorem III intro}]
It is easy to check $\phihat \circ \cyc = 0$ (see \cite[Theorem 5.4]{MikZhar}). 
Conversely, let $\beta \in \HH^{\BM}_{n-1,n-1}(X, \Z)$ such that $\phihat(\beta) = 0$. 
Since $X$ satisfies Poincar\'e duality with integral coefficients, 
there exists $\alpha \in  \HH^{1,1}(X, \Z)$ with $\alpha \cap X = \beta$.
By Lemma \ref{lemma:wavehomomco} we have $\phi(\alpha) \cap X = 0$.
Then, again by Poincar\'e duality, we get $\phi(\alpha) = 0$ and 
the statement follows from Theorem \ref{Theorem II intro}.
\qed
\end{proof}

\section{Corollaries and Examples}

In this final section we deduce some corollaries of the main theorems and present some explicit examples. In the case of tropical abelian surfaces and Klein bottles with a tropical structure, we show how to represent $(1,1)$-classes in the kernel of the wave map as fundamental classes of tropical cycles. We also calculate the wave map for two combinatorial types of  smooth tropical quartic  surfaces. We start with some interesting consequences of Theorem \ref{Theorem III intro}. 

\begin{corollary}\label{cor:zerowave}
Let $X$ be a tropical manifold. 
If $\HH^{0,2}(X, \R) = 0$, then every class in $\HH^{\BM}_{n-1,n-1}(X, \Z) $ is the fundamental class of a tropical cycle in $X$.  
\end{corollary}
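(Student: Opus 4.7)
The plan is to combine Theorem \ref{Theorem III intro} with Poincaré duality so that the hypothesis $\HH^{0,2}(X,\R)=0$ forces the target of the wave homomorphism to vanish; then every class of $\HH^{\BM}_{n-1,n-1}(X,\Z)$ is trivially in the kernel of $\phihat$, and Theorem \ref{Theorem III intro} identifies it with the fundamental class of a codimension one tropical cycle.

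Concretely, first I would upgrade Theorem \ref{thm:PD} to real coefficients: the argument of Section \ref{sec:poincare} applies verbatim with $Q=\R$ (alternatively one can cite \cite[Theorem 2]{JSS}), so the cap product with the fundamental class yields an isomorphism
\[
\cap[X]\colon\HH^{p,q}(X,\R)\;\stackrel{\sim}{\longrightarrow}\;\HH^{\BM}_{n-p,n-q}(X,\R)
\]
for all $p,q$. Specialising to $(p,q)=(0,2)$ and using the hypothesis gives
\[
\HH^{\BM}_{n,n-2}(X,\R)\;\cong\;\HH^{0,2}(X,\R)\;=\;0.
\]
Consequently the wave homomorphism $\phihat\colon\HH^{\BM}_{n-1,n-1}(X,\Z)\to\HH^{\BM}_{n,n-2}(X,\R)$ is the zero map, so its kernel is all of $\HH^{\BM}_{n-1,n-1}(X,\Z)$. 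Theorem \ref{Theorem III intro} then finishes the proof.

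This corollary is essentially free once the two main theorems are in hand, so I do not expect any real obstacle. The one minor point worth checking is that Poincaré duality is available with $\R$ coefficients, but this is immediate from the arguments of Section \ref{sec:poincare}, which go through equally well for any ring $Q$ with $\Z\subset Q\subset\R$.
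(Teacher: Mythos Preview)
Your proof is correct and follows essentially the same route as the paper: use Poincar\'e duality (with real coefficients) to deduce $\HH^{\BM}_{n,n-2}(X,\R)=0$ from the hypothesis, so that $\phihat$ has trivial target, and then invoke Theorem~\ref{Theorem III intro}. Your explicit remark that one needs Poincar\'e duality over $\R$ rather than $\Z$ is a small clarification, but the argument is otherwise identical to the paper's.
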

\begin{proof}
By Poincar\'e duality \ref{thm:PD} we find that $\HH_{n,n-2}^{\BM}(X, \R) = 0$ and 
thus every element of $\HH^{\BM}_{n-1,n-1}(X, \Z)$ is in the kernel of $\phihat$. 
The corollary now follows from Theorem \ref{Theorem III intro}.
\qed
\end{proof}

\begin{corollary}\label{cor:torsionker}
Let $X$ be a tropical manifold. 
If $\alpha \in \HH_{n-1, n-1}^{\BM}(X, \Z)$ is a torsion class, 
then $\alpha$ is the fundamental class of a codimension  one tropical cycle. 
\end{corollary}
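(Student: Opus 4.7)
The plan is to reduce the claim directly to Theorem \ref{Theorem III intro} by showing that any torsion class lies in the kernel of the wave homomorphism $\phihat \colon \HH^{\BM}_{n-1,n-1}(X, \Z) \to \HH^{\BM}_{n,n-2}(X, \R)$. The key observation is that the target of $\phihat$ is taken with $\R$-coefficients, so it is torsion-free as an abelian group.

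First I would fix $\alpha \in \HH^{\BM}_{n-1,n-1}(X, \Z)$ torsion and choose a nonzero integer $k$ with $k\alpha = 0$. Since $\phihat$ is a group homomorphism, we get $k \cdot \phihat(\alpha) = \phihat(k\alpha) = 0$ in $\HH^{\BM}_{n,n-2}(X, \R)$. Because $\HH^{\BM}_{n,n-2}(X, \R)$ is an $\R$-vector space, multiplication by the nonzero integer $k$ is injective, so $\phihat(\alpha) = 0$.

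Now Theorem \ref{Theorem III intro} applies directly: the kernel of $\phihat$ in degree $(n-1,n-1)$ consists precisely of fundamental classes of codimension one tropical cycles in $X$. Hence $\alpha = [Z]$ for some such cycle $Z$, as required.

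There is essentially no obstacle here; the statement is purely a torsion-freeness observation combined with Theorem \ref{Theorem III intro}. The only thing to be mindful of is that the wave homomorphism is defined as a map to real (Borel-Moore) homology from the outset, which is exactly what makes this elementary argument work.
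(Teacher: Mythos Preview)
Your proof is correct and matches the paper's approach exactly: the paper simply asserts $\phihat(\alpha) = 0 \in \HH^{\BM}_{n,n-2}(X, \R)$ and invokes Theorem~\ref{Theorem III intro}. You have just spelled out the (obvious) reason why a torsion class maps to zero in an $\R$-vector space, which the paper leaves implicit.
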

\begin{proof}
We have $\phihat(\alpha) = 0 \in \HH^{\BM}_{n,n-2}(X, \R)$. 
Thus the corollary follows again from Theorem \ref{Theorem III intro}.
\qed
\end{proof}

\begin{rem}{\rm
In some instances the image of the  wave homomorphism is contained in
the appropriate  cohomology group with rational instead of real coefficients. 
Then the dimension of the  kernel of the  extension 
$\phi \colon \HH^{1,1}(X, \Z) \otimes \mathbb{Q} \to \HH^{0,2}(X, \mathbb{Q})$ 
gives the rank of the free part of the kernel of 
$\phi \colon \HH^{1, 1}(X, \Z) \to \HH^{0,2}(X, \mathbb{Q})$. 
For example, for the  $\mathbb{Q}$-tropical projective varieties as introduced in  \cite{IKMZ} 
the wave homomorphism is always defined over $\mathbb{Q}$. }
\end{rem}

\subsection{Tropical structures on the Klein bottle} \label{ex:klein}

Let $K$ be a Klein bottle obtained from gluing a parallelogram 
$P \subset \R^2$ with edges $a,b,c,d$ as follows. The edges $b$ and $d$ are glued using the translation in the direction of $a$,
and the edges $a$ and $c$ are glued using an (orientation-reversing) affine transformation $h$ which sends $a$ to $c$ 
with flipped orientation. Let $H \in \text{GL}(2,\R)$ be the linear part of $h$. 
Then the tropical structure given by $\Z^2 \subset \R^2$ extends to $K$ if and only if $H \in \text{GL}(2,\Z)$.
Note that $\det(H) = -1$ and one of its eigenvalues is $-1$, hence the
second eigenvalue is $+1$.  
It follows that the eigenvectors have rational directions and a  computation
shows that the two primitive eigenvectors either form a lattice basis or generate a sublattice of index $2$. 
We can normalise the two cases to the following matrices: 
\begin{align} 
  H_1 &= \begin{pmatrix} -1 & 0 \\ 0 & 1 \end{pmatrix} & H_2 &= \begin{pmatrix} -1 & 1 \\ 0 & 1 \end{pmatrix}
\end{align}
Correspondingly, the parallelogram $P$ has vertices $0$, $l_1 e_1$, $l_1 e_1 + l_2 v_2$, and $l_2 v_2$, where
$v_2$ is either $e_2$ or $\binom{1}{2}$ (see Figures \ref{fig:Klein}, \ref{fig:KleinIndexTwo}). We denote the two Klein bottles
by $K_1$ and $K_2$. 

\begin{figure}[h]
\begin{center}
\begin{tikzpicture}[scale=1.5]
\newcommand{\midarrow}{\tikz \draw[-triangle 90] (0,0) -- +(.1,0);}

\begin{pgfonlayer}{background}
\draw [ultra thick, draw=black] (0,0) rectangle (3,1.5);
\fill[black!20] (0,0) -- +(3,0) -- +(3,1.5) -- +(0,1.5) --cycle;
\end{pgfonlayer}

\begin{pgfonlayer}{foreground}

\draw [ultra thick, draw=blue] (0,0) rectangle (1.5,1.5);
  \fill[blue!20] (0,0) -- +(1.5,0) -- +(1.5,1.5) -- +(0,1.5) --cycle;
    \draw [thick,-latex,blue]  (-.005,.6) -- (-0.005, 0.75) node
  [below right] {$v$}; 
      \draw [thick,-latex,blue]  (1.5,.6) -- (1.5, 0.75) node
  [below right] {$v$}; 
\draw [ thick, draw=red] (0,0) --(3,0);

\draw [thick, draw=blue] (0,0.04) --(1.5,0.04);
  \draw [thick,-latex, red]  (2.25,0) -- (2.25, 0.5) node
  [below right] {$v$}; 
    \draw [thick,->>, red]  (2.25,0) -- (2.5, 0) node
  [below right] {}; 
    \draw [thick,-latex,blue]  (.75,0) -- (.75, 0.5) node
  [below right] {$v$}; 
  \draw [thick,-latex, blue]  (.75,1.5) -- (.75, 2) node
  [below right] {$v$}; 
  
    \draw [thick,->>, blue]  (0,1.2) -- (0, 1.3) node
  [below right] {}; 

    \draw [thick,<<-, blue]  (1.5,1.2) -- (1.5, 1.3) node
  [below right] {}; 
  
      \draw [thick,->>, blue]  (1.2,1.5) -- (1.3, 1.5) node
  [below right] {}; 
      \draw [thick,<<-, blue]  (1.2,0) -- (1.3, 0) node
  [below right] {}; 
  \end{pgfonlayer}
\end{tikzpicture}
\hspace{2cm}
\begin{tikzpicture}[scale=1.5]

\begin{pgfonlayer}{background}
\draw [ultra thick, draw=black] (0,0) rectangle (3,1.5);
\fill[black!20] (0,0) -- +(3,0) -- +(3,1.5) -- +(0,1.5) --cycle;

\end{pgfonlayer}

\begin{pgfonlayer}{foreground}
  
\draw [ ultra thick, draw=red] (0,0) -- (0, 1.5) node [below  right] {};
\draw [ ultra thick, draw=red] (1.5,0) -- (1.5, 1.5) node [below  right] {\textcolor{red}{$-1$}};

  \end{pgfonlayer}
\end{tikzpicture}
\end{center}
\caption{Representing the  torsion class in $\HH_{1, 1}(K_1)$ of a Klein bottle $K_1$ from Subsection \ref{ex:klein} as a parallel class.}

\vspace{-5cm}\phantomsection\label{fig:Klein}

\vspace{5cm}\end{figure}
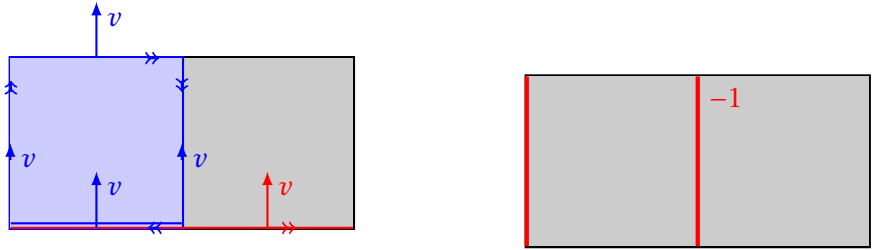

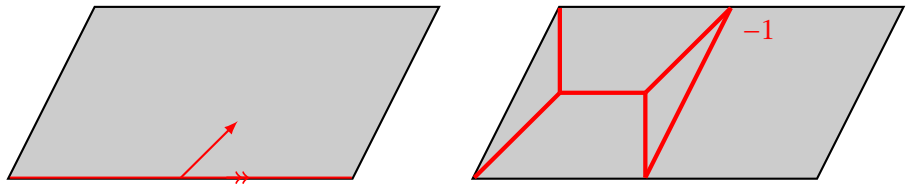
\begin{figure}[h]
\begin{center}

\begin{tikzpicture}[scale=1.5]
\draw [ultra thick, draw=black] (0,0) -- +(.75,1.5) -- +(3.75,1.5) -- +(3,0) --cycle;
\fill[black!20] (0,0) -- +(.75,1.5) -- +(3.75,1.5) -- +(3,0) --cycle;
\draw [ thick, draw=red] (0,0) -- (3,0);
    \draw [thick,->>, red]  (1.9,0) -- (2.1, 0);
     \draw [thick,-latex,red]  (1.5,0) -- (2, .5); 
\end{tikzpicture}
\hspace{0.2cm}
\begin{tikzpicture}[scale=1.5]

\begin{pgfonlayer}{background}
\draw [ultra thick, draw=black] (0,0) -- +(.75,1.5) -- +(3.75,1.5) -- +(3,0) --cycle;
\fill[black!20] (0,0) -- +(.75,1.5) -- +(3.75,1.5) -- +(3,0) --cycle;

\draw [ ultra thick, draw=red] (1.5,0) -- (2.25, 1.5) node [below  right] {\textcolor{red}{$-1$}};

\draw [ ultra thick, draw=red] (1.5,0) -- (1.5, .75)  {};
\draw [ ultra thick, draw=red] (.75,1.5) -- (.75, .75) {};

\draw [ ultra thick, draw=red] (2.25,1.5) -- (1.5, .75) node [below right] {};
\draw [ ultra thick, draw=red] (0,0) -- (.75, .75) node [below right] {};

\draw [ ultra thick, draw=red] (.75,.75) -- (1.5, .75) node [below right] {};
\end{pgfonlayer}

\end{tikzpicture}
\end{center}
\caption{Representing a class in $\HH_{1, 1}(K_2)$ of the Klein bottle $K_2$ 
from Subsection \ref{ex:klein} as a parallel class. }

\vspace{-4cm}\phantomsection\label{fig:KleinIndexTwo}

\vspace{4cm}\end{figure}

Note that $\HH_{2, 0}(K, \Z) = \Z_2$ so that $\HH_{2, 0}(K, \R) = 0$. Hence we are in the situation of Corollary \ref{cor:zerowave}
which says that any $(1,1)$-class can be represented by a tropical cycle of dimension one. 
Let $a = [0, l_1 e_1]$ and $b = [0, l_2 v_2]$ denote two oriented edges of $P$. Any $(1,1)$-class can be represented by
\[
  v \otimes a + \lambda v_2 \otimes b, \; \; \; v \in \Z^2, \lambda \in \Z.
\]
As boundaries of $(1,2)$-chains we obtain
\begin{align} 
  \partial (\binom{x}{y} \otimes P) = \begin{cases}
	  2y e_2 \otimes a & \text{ if } H=H_1, \\
	  y \binom{1}{2} \otimes a & \text{ if } H=H_2.
	\end{cases}
\end{align}
Hence we find $\HH_{1,1}(K_1, \Z) = \Z_2 \oplus \Z \oplus \Z = \langle e_2 \otimes a, e_1 \otimes a, e_2 \otimes b \rangle$ and 
$\HH_{1,1}(K_2, \Z) = \Z \oplus \Z = \langle \binom{1}{1} \otimes a, \binom{1}{2} \otimes  b \rangle$.
Among these generators, the chains $e_1 \otimes a$ and $v_2 \otimes b$ can obviously be represented 
by tropical cycles. Such representations are less obvious for 
the torsion class $e_2 \otimes a$ and the class $\binom{1}{1} \otimes a$.
Explicit representations by tropical cycles are depicted for the two cases in Figures \ref{fig:Klein}, \ref{fig:KleinIndexTwo}. 
Here the chains are drawn in thin red lines with framing and orientation given by simple and double arrows respectively.
The homologous tropical cycles are drawn in thick red lines and labelled with their respective weights if not equal $1$.

For the sake of completeness let us briefly discuss the full classification of tropical Klein bottles.
Instead of just a translation, we may glue the edges $b$ and $d$ via the affine
transformation $x \mapsto Tx + t$, where $t$ is the translation along $a$ and $T \in \text{GL}(2,\Z)$ (before we assumed $T = \id$). 
Depending on $H$, the possible matrices $T$ are of the following two types (see \cite{Sepe2010}) 
\begin{align} 
  T_{1,n} &= \begin{pmatrix} 1 & 0 \\ n & 1 \end{pmatrix}, \ \ \  n \in \Z & T_{2,n} &= \begin{pmatrix} 1 + 2n & -n \\ 4n & 1-2n \end{pmatrix}, \   \ \ n \in \Z. 
\end{align}
The obtained Klein bottles $K_{1,n}$ and $K_{2,n}$ give a full list of Klein bottles with a tropical structure. Analogous to the case $n=0$, we
can compute the homology groups for $n \neq 0$ as
$\HH_{1,1}(K_{1,n}, \Z) = \Z / 2 \Z \oplus \Z / n \Z = \langle e_2 \otimes a, e_2 \otimes b \rangle$ and 
$\HH_{1,1}(K_{2,n}, \Z) = \Z / 2n \Z = \langle \binom{1}{2} \otimes b \rangle$.
Again, it is clear that $v_2 \otimes b$ can be represented by tropical cycles, while 
for $e_2 \otimes a$ the same trick as for $K_1$, Figure \ref{fig:Klein}, is needed.


\subsection{Tropical abelian surfaces}
A tropical abelian surface is $S = \R^2 /\Lambda$ where $\Lambda$ is a rank two lattice 
equal to $ \langle w_1, w_2 \rangle_{\Z}$ for $w_1, w_2 \in \R^2$. 
Therefore $S \cong S^1 \times S^1$.
The sheaf $\mathcal{F}_{\Z}^p$ is the constant sheaf $\bigwedge^p\Z^2$ for $p = 0, 1, 2$, 
and tropical homology groups $\HH_{p, q}(S, \Z)$ are free $\Z$ modules whose ranks are given by the follow tropical Hodge diamond, 
$$\begin{array}{ccccc} &  & 1 &  &  \\ & 2 &  & 2 &  \\1 &  & 4 &  & 1 \\  & 2 &  & 2 &   \\  &  & 1 &  & \end{array}.$$
We can choose a basis of $\HH_{1, 1}(S, \Z)$ as $\alpha_{ij} = e_i \otimes \sigma_j$ 
where $\sigma_1, \sigma_2$ form a basis of $\HH_{1}(S, \Z)$ and $e_1, e_2$ are a lattice basis of $\Z^2$.  
Furthermore suppose that $\sigma_i$ is the quotient of the oriented line in $\R^2$ in direction $w_i$. 
Then the eigenwave homomorphism is given by $\hat{\phi} (\alpha_{ij}) = e_i \wedge w_j$. 

We can explicitly describe a parallel representative of $\alpha \in \HH_{1, 1}(S, \Z) \cap \ker(\hat \phi)$. 
For $\alpha \in \HH_{1, 1}(S, \Z)$ we can write $\alpha = v_1 \otimes \sigma_1 + v_2 \otimes \sigma_2$, 
where $v_1$ and $v_2$ are integer vectors. 
Then $\alpha $ is in $\ker(\hat \phi)$ if and only if $v_1 \wedge w_1 + v_2 \wedge w_2 = 0$. 

Suppose that $v_1$ and $v_2$ are linearly independent. 
Consider the triangle  $T$ in $\R^2$ with vertices $0, w_1,$ and $w_2$. 
Firstly, we claim that if $v_1 \wedge w_1 + v_2 \wedge w_2 = 0$, 
then the lines in directions $v_1 + v_2$, $v_1$, and $v_2$ drawn from the vertices 
$0, w_1$, and $w_2$, respectively, are concurrent. 

Since $v_1 \wedge v_2 \neq 0$, it forms  a $\R$-basis of $\bigwedge^2 \R^2$
and hence there exists an $\alpha \in \R$ such that $\alpha v_1 \wedge v_2 = v_1 \wedge w_1 = - v_2 \wedge w_2$. 
Then the  three lines mentioned above intersect at the point $p = \alpha ( v_1 + v_2)$ in $\R^2$. 
To see this notice that $ (x - w_i) \wedge v_i = 0$ is the defining equation for the line from $w_i$. 
Then 
\begin{align*}
  (\alpha (v_1 + v_2)  - w_i) \wedge v_i = \alpha(v_j \wedge v_i) + v_i \wedge w_i \quad \text{for} \quad i \neq j.
\end{align*}
Consider the  $(1,1)$-cycle $\alpha'  = (v_1 + v_2) \otimes [0,p] - v_1 \otimes [w_1, p] - v_2 \otimes [w_2,p]$.
Then $\alpha - \alpha' = \partial(\beta_1 + \beta_2)$, where 
$\beta_i =  v_i \otimes \tau_i$ is a $(1, 2)$-simplex based on the triangle $\tau_i = [0 w_i p]$
(with given orientation).
Then  $\alpha'$ is the fundamental class of a tropical $1$-cycle and it is homologous to $\alpha$, see Figure \ref{fig:abelian}.

If $v_1$ and $v_2$ are linearly dependent and $\alpha \in \ker(\hat \phi)$ then $w_1 + w_2 = \alpha v_1$ for some $\alpha \in \R$. 
In particular, $w_1 + w_2$ is a rational direction and 
$\alpha$ can be represented by a parallel cycle supported on the diagonal of a  fundamental domain for $S$.

Let us make a few remarks about general $n$-dimensional tropical tori $S$. A $(1,1)$-class $\gamma = \sum_{i,j} \gamma_{ij} e_i^* \otimes e_j^*
\in \HH^{1, 1}(S, \Z)$ lies in the kernel of $\phihat$ if and only if $(\gamma_{ij})$ is symmetric. We do not know an explicit
construction of a tropical divisor $D \in Z_{n-1}(S)$ such that $[D] = \gamma \cap [S]$. 
However, if in addition $(\gamma_{ij})$ is positive definite (in this case, $\gamma$ is called a polarization of $S$), 
an explicit representative of $\gamma \cap [S]$ is given by the theta divisor $\Theta_\gamma$ associated to $\gamma$, 
see \cite{MikZha:Theta}. 
This follows from \cite[Lemma 5.2]{MikZha:Theta}, Theorem \ref{thm:commutingdiagram}
and the explicit description of $c_1$ used in Proposition \ref{boundary and wave}.
Note that in this case, $\gamma \cap [S]$ can be represented by an effective divisor. 
Moreover, if a polarization $\gamma$ exists, 
representatives for other classes $\gamma'$ can be constructed
as $\Theta_{\gamma' + n \gamma} - n \Theta_\gamma$ for $n$ sufficiently large. 
It would be interesting to find general criteria for effective representability and to describe 
explicit constructions of effective representatives in such cases.

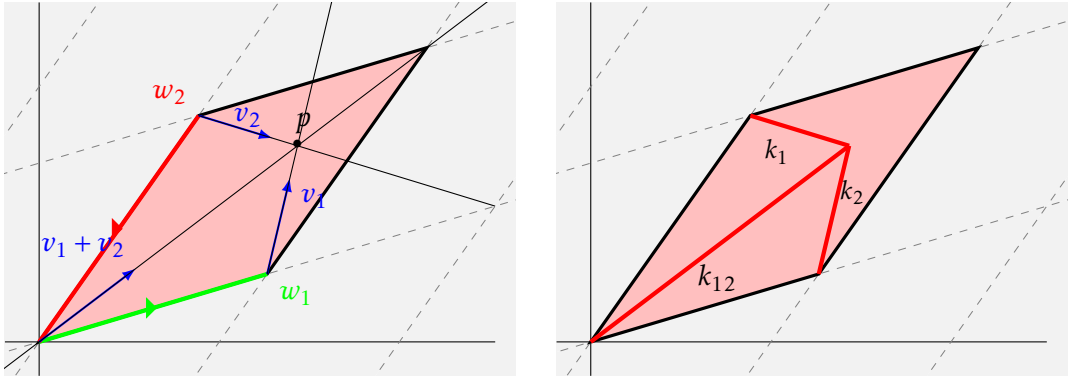
\begin{figure}
\begin{center}
\begin{tikzpicture}[scale=1.5]

\coordinate (Origin)   at (0,0);
\coordinate (XAxisMin) at (-0.3,0);
\coordinate (XAxisMax) at (4,0);
\coordinate (YAxisMin) at (0,-0.3);
\coordinate (YAxisMax) at (0,3);

\begin{pgfonlayer}{foreground}
  \draw [thin] (XAxisMin) -- (XAxisMax);
  \draw [thin] (YAxisMin) -- (YAxisMax);
\end{pgfonlayer}

\clip (-0.3,-0.3) rectangle (4.18cm,3cm); 
\begin{pgfonlayer}{background}
\clip (-0.3,-0.3) rectangle (4.18cm,3cm); 
\end{pgfonlayer}
\begin{pgfonlayer}{foreground}
\clip (-0.3,-0.3) rectangle (4.18cm,3cm); 

\end{pgfonlayer}

\pgftransformcm{1}{0.3}{0.7}{1}{\pgfpoint{0cm}{0cm}}

\newcommand{\midarrow}{\tikz \draw[-triangle 90] (0,0) -- +(.1,0);}

\coordinate (Bone) at (0,2);
\coordinate (Btwo) at (2,-2);
\coordinate (negBonePlusBtwo) at (-2,0);
\coordinate (vone) at (1.5, 1);
\coordinate (vtwo)  at (1,1.5 );
\coordinate (vonetwo) at (0.5 , 0.5);

\begin{pgfonlayer}{foreground}
  \draw[style=help lines,dashed] (-14,-14) grid[step=2cm] (14,14);
\end{pgfonlayer}

\begin{pgfonlayer}{background}
\foreach \x in {-2,-1,...,3}{
  \foreach \y in {-2,-1,...,4}{
  \pgfmathparse{int(3*(\x+\y+4))}
   \fill[black!5] (2*\x,2*\y) -- +(2,0) -- +(2,2) -- +(0,2) --cycle;

  }  
}

\fill[red!25] (0,0) -- +(2,0) -- +(2,2) -- +(0,2) --cycle;
\end{pgfonlayer}

\begin{pgfonlayer}{foreground}
  \draw[very thick,draw=black] (Origin) rectangle ($2*(Bone)+(Btwo)$);

  \draw [ultra thick, green] (Origin) -- node {\midarrow} ($(Bone)+(Btwo)$) node
  [below right] {$w_1$};
  \draw [ultra thick, red] (Origin) -- node {\midarrow} (Bone) node [above left]
  {$w_2$};
  
  \draw [thick,-latex, blue]  ($(Bone)+(Btwo)$) -- (vone) node
  [below right] {$v_1$}; 
  \draw [thick,-latex, blue] (Bone)--(vtwo) node [above left]
  {$v_2$};  
  \draw [thick,-latex, blue] (Origin)--(vonetwo) node [above left]
  {$v_1+v_2$};

   \node at (1.3333333,1.3333333) {\textbullet};
      \node [above ] at (1.39,1.3333333) {$p$};              
    \draw [] (-1,-1) -- (4,4);
    \draw [] (0,2 )--(4,0  );
       \draw [] (2,0 )--(0,4  );
\end{pgfonlayer}
\end{tikzpicture}
\hspace{0.3cm}
\begin{tikzpicture}[scale=1.5]
\coordinate (Origin)   at (0,0);
\coordinate (XAxisMin) at (-0.3,0);
\coordinate (XAxisMax) at (4,0);
\coordinate (YAxisMin) at (0,-0.3);
\coordinate (YAxisMax) at (0,3);

\newcommand{\midarrow}{\tikz \draw[-triangle 90] (0,0) -- +(.1,0);}
\begin{pgfonlayer}{foreground}
  \draw [thin] (XAxisMin) -- (XAxisMax);
  \draw [thin] (YAxisMin) -- (YAxisMax);
\end{pgfonlayer}

\clip (-0.3,-0.3) rectangle (4.18cm,3cm); 
\begin{pgfonlayer}{background}
\clip (-0.3,-0.3) rectangle (4.18cm,3cm); 
\end{pgfonlayer}
\begin{pgfonlayer}{foreground}
\clip (-0.3,-0.3) rectangle (4.18cm,3cm); 

\end{pgfonlayer}

\pgftransformcm{1}{0.3}{0.7}{1}{\pgfpoint{0cm}{0cm}}

\coordinate (Bone) at (0,2);
\coordinate (Btwo) at (2,-2);
\coordinate (negBonePlusBtwo) at (-2,0);
\coordinate (vone) at (1.5, 1);
\coordinate (vtwo)  at (1,1.5 );
\coordinate (vonetwo) at (1.5 -1 , -1+3.5);

\begin{pgfonlayer}{foreground}
  \draw[style=help lines,dashed] (-14,-14) grid[step=2cm] (14,14);
\end{pgfonlayer}

\begin{pgfonlayer}{background}
\foreach \x in {-2,-1,...,3}{
  \foreach \y in {-2,-1,...,4}{
  \pgfmathparse{int(3*(\x+\y+4))}
   \fill[black!5] (2*\x,2*\y) -- +(2,0) -- +(2,2) -- +(0,2) --cycle;

  }  
}

\fill[red!25] (0,0) -- +(2,0) -- +(2,2) -- +(0,2) --cycle;
\end{pgfonlayer}


\begin{pgfonlayer}{foreground}
  \draw[very thick,draw=black] (Origin) rectangle ($2*(Bone)+(Btwo)$);


  
  %
  

  \node at (1.7333333,.803333333) {\small{$k_2$}};     
  
  \node [above right] at (.5,1.3333333) {\small{$k_1$}};     
  \node at (.9,.3) {$k_{12}$};     
    \draw [red, ultra thick] (0,0) --(1.3333333,1.3333333) ;
    \draw [red, ultra thick] (0,2 )--(1.3333333,1.3333333) ;
       \draw [red, ultra thick] (2,0 )--  (1.3333333,1.3333333) ;
\end{pgfonlayer}
\end{tikzpicture}
\end{center}
\caption{A $(1,1)$-cycle on a tropical abelian surface in red and green on the left, 
and a representative of its class by the fundamental class of a tropical $1$-cycle.}

\vspace{-7cm}\phantomsection\label{fig:abelian}

\vspace{7cm}
\end{figure}

\subsection{Tropical hypersurfaces}

A tropical hypersurface in $\R^{n+1}$ or in an $n+1$-dimensional tropical toric variety is the divisor (as in Definition \ref{def:divcyc}) 
of a tropical polynomial function.
It is an $n$-dimensional polyhedral complex which is dual to a 
regular subdivision of a lattice polytope. This implies the following statement.

\begin{prop}\label{prop:bouquet}
A tropical hypersurface is homotopic to a bouquet of spheres. 
\end{prop}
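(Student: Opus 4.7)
The plan is to use the polyhedral decomposition induced by the defining tropical polynomial to compute the homology of $V$ via a duality argument, and then upgrade the homological conclusion to a homotopy-type statement using standard CW arguments.

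First, I would fix the setting: $V$ is the tropical hypersurface defined by a tropical polynomial $f$ in $\R^{n+1}$, and $\R^{n+1}$ inherits a polyhedral decomposition whose $(n+1)$-dimensional open cells (``chambers'') are the maximal regions where $f$ is affine linear. By construction, $V$ is the $n$-skeleton of this decomposition, and crucially, each chamber $P$ is a convex open polyhedron, so $\overline{P}$ is a closed convex polyhedron (bounded or unbounded) and $\partial P$ is its polyhedral boundary.

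Second, I would analyze the homology of $V$ through the quotient $\R^{n+1}/V$. Since $V$ is a CW-subcomplex, $\R^{n+1}/V \simeq \bigvee_P \overline{P}/\partial P$. For a bounded chamber $P$, the pair $(\overline{P}, \partial P)$ is homeomorphic to a closed ball modulo its boundary sphere, giving $\overline{P}/\partial P \simeq S^{n+1}$. For an unbounded chamber, $\overline{P}$ and $\partial P$ are both contractible (as a closed convex polyhedron and the union of its proper faces admitting a common recession direction), so $\overline{P}/\partial P$ is contractible. Hence $\R^{n+1}/V \simeq \bigvee_{P \text{ bounded}} S^{n+1}$. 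Combining this with the long exact sequence of the pair $(\R^{n+1}, V)$ and the contractibility of $\R^{n+1}$, one obtains $\tilde{H}_k(V, \Z) = 0$ for $k \neq n$ and $H_n(V, \Z) = \Z^b$, where $b$ counts bounded chambers.

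Third, for the homotopy conclusion, the cases $n=0$ (finite set of points) and $n=1$ (connected graph, hence a wedge of circles) are immediate. For $n \geq 2$, I would establish that $V$ is simply connected by induction on $n$: at each vertex $v$ the link of $v$ in $V$ is the support of a tropical hypersurface in an $n$-sphere, and a van Kampen argument on the cover of $V$ by open stars of vertices, combined with the convexity of the chambers, shows $\pi_1(V) = 1$. Once simple connectivity is established, Hurewicz's theorem identifies $\pi_n(V) \cong H_n(V) = \Z^b$, and choosing maps $S^n \to V$ realizing a basis gives a map $\bigvee^b S^n \to V$ inducing isomorphisms on all homology groups; Whitehead's theorem then upgrades this to a homotopy equivalence. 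The case of hypersurfaces in a tropical toric variety is handled analogously by stratifying according to torus orbits and applying the same argument locally.

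The main obstacle is the simple connectivity step for $n \geq 2$: while the homology computation is essentially formal from the convexity of chambers, controlling $\pi_1$ requires invoking the inductive local structure of tropical hypersurfaces together with a careful van Kampen argument, since an arbitrary codimension-one subcomplex of $\R^{n+1}$ need not be simply connected and it is the convex-chamber property that rules out loops being forced by the singular strata of $V$.
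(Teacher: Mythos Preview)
The paper does not actually prove this proposition: the sentence preceding it records that a tropical hypersurface is dual to a \emph{regular} subdivision of a lattice polytope and then asserts that the statement follows. So there is no detailed argument to compare against; the implicit reasoning is the standard one in which regularity of the dual subdivision (hence shellability, via Bruggesser--Mani) produces an explicit sequence of collapses of the hypersurface onto a wedge of $n$-spheres.

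Your route is genuinely different and more hands-on: you use only the convexity of the chambers of the ambient decomposition of $\R^{n+1}$, compute homology via the pair $(\R^{n+1},V)$, and then upgrade via Hurewicz--Whitehead. The homology computation is correct once you add the hypothesis that the Newton polytope is full-dimensional (so that every unbounded chamber has \emph{pointed} recession cone, which is what makes $\overline P/\partial P$ contractible; without this, e.g.\ for $f(x,y)=\max(0,x-1,-x-1)$, one gets strip-shaped chambers with disconnected $\partial P$).

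The genuine gap is the simple-connectivity step. Covering $V$ by open stars of vertices gives a good cover, but then the nerve lemma only returns $V\simeq \mathrm{Nerve}$, and the nerve has exactly the same $1$- and $2$-skeleton information as $V$ itself; invoking ``links are lower-dimensional tropical hypersurfaces'' does not break this circularity, and the inductive base case ($n-1=1$) would in any case fail since tropical curves are not simply connected. The convexity of chambers, which you cite, is the right ingredient, but it does not feed into a van Kampen argument on open stars; rather, it is what lets one push a bounding $2$-disk in $\R^{n+1}$ off the open chambers and into $V$ (each $\partial P$ being $S^n$ or contractible, hence simply connected for $n\ge 2$), and that pushing argument needs to be made precise. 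The shellability argument the paper is gesturing at sidesteps this entirely by producing the homotopy equivalence to $\bigvee S^n$ directly, without ever computing $\pi_1$.
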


A tropical hypersurface is \emph{non-singular} if it is  a tropical manifold. 
This is the case  if and only if it is dual 
to a  regular subdivision of a lattice polytope which is \emph{primitive}, 
i.e.~if every top dimensional polytope in the subdivision is of lattice volume $1$.

\begin{corollary}
If $X$ is a non-singular tropical hypersurface in an $n+1$-dimensional tropical toric variety for $n \geq 3$ 
then every class in $\HH^{\BM}_{n-1, n-1}(X, \Z)$ is the fundamental class of a tropical cycle in $X$. 

\end{corollary}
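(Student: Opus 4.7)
The plan is to apply Corollary \ref{cor:zerowave}, which reduces the corollary to verifying that $\HH^{0,2}(X, \R) = 0$. Since $\mathcal{F}^0$ is the constant sheaf $\underline{\R}$ on any rational polyhedral space, we have $\HH^{0,2}(X, \R) \cong \HH^2(X, \R)$, the ordinary second singular cohomology group. So the whole corollary boils down to a purely topological vanishing statement.

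To get this vanishing, I would appeal to Proposition \ref{prop:bouquet}: $X$ is homotopy equivalent to a bouquet of spheres. The crucial extra input in the non-singular case is that these spheres all have dimension $n = \dim X$. The reason is that a non-singular tropical hypersurface is locally modelled on a product of a Bergman fan of a uniform matroid with an affine space; using the dual cell decomposition associated with the primitive regular triangulation of the Newton polytope, one shows that $X$ deformation retracts onto a finite CW complex whose cells occur only in dimensions $0$ and $n$. Consequently $\HH^q(X, \R) = 0$ for all $0 < q < n$.

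Once this is known, the hypothesis $n \geq 3$ is exactly what is needed to ensure $2 \neq n$, so that $\HH^2(X, \R) = 0$, and Corollary \ref{cor:zerowave} gives the desired representability of every class in $\HH^{\BM}_{n-1,n-1}(X, \Z)$ by a tropical cycle.

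The main obstacle is the identification of the dimensions of the spheres in Proposition \ref{prop:bouquet}: the proposition as stated only provides a bouquet of spheres of possibly mixed dimensions, and to conclude we must rule out the presence of $2$-spheres in the wedge. This is where the non-singularity assumption is essential, and where one has to invoke the combinatorial description of a smooth tropical hypersurface via its dual primitive triangulation. The rest of the argument is immediate from the machinery already assembled.
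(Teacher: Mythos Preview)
Your overall strategy is exactly the paper's: reduce to $\HH^{0,2}(X,\R)=\HH^2(X,\R)=0$ via Corollary~\ref{cor:zerowave}, and deduce that vanishing from the bouquet-of-spheres description in Proposition~\ref{prop:bouquet} together with $n\geq 3$.

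Where you go astray is in the role you assign to non-singularity. You claim that the extra input in the non-singular case is that the spheres in the bouquet all have dimension $n$, and that this is where smoothness is ``essential''. That is not correct. For \emph{any} tropical hypersurface $X\subset\R^{n+1}$ dual to a regular subdivision, the complement $\R^{n+1}\setminus X$ is a union of convex (hence contractible) regions; retracting the unbounded ones onto $X$ and each bounded one minus a point onto its boundary shows $X\simeq \R^{n+1}\setminus\{p_1,\dots,p_k\}\simeq\bigvee^k S^n$. Passage to the closure in the ambient tropical toric variety does not change the homotopy type. So the spheres are $n$-spheres regardless of whether the dual subdivision is primitive; your proposed local argument via Bergman fans is neither needed nor, as sketched, sufficient to pin down a global homotopy type.

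The genuine reason the corollary requires non-singularity is that Corollary~\ref{cor:zerowave} applies only to tropical \emph{manifolds}: its proof uses Poincar\'e duality over $\Z$ (Theorem~\ref{thm:PD}) and Theorem~\ref{Theorem III intro}, both of which are stated for tropical manifolds. A tropical hypersurface is a tropical manifold precisely when it is non-singular (dual to a primitive subdivision). So the hypothesis enters through the applicability of the main theorems, not through the homotopy type of $X$.
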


\begin{proof}
It follows from  Proposition \ref{prop:bouquet} 
that 
$\HH^{0,2}(X, \R)  = 0$, 
so the statement follows from Corollary \ref{cor:zerowave}.
\qed
\end{proof}

The first example of non-trivial wave maps for tropical hypersurfaces is in the case of  smooth tropical quartic  surfaces.
We now look at two specific  examples.

\begin{defn}
A smooth  tropical  quartic surface is dual to a primitive regular triangulation of a size $4$ tetrahedron.
\end{defn}

There is one $2$-dimensional polytopal sphere $P$ contained in smooth tropical quartic  surface. 
It is dual to all cells of the regular subdivision of the size $4$  tetrahedron  
which contain the unique interior lattice point of the size $4$ simplex of dimension $3$. 
The Betti diamond of a  smooth tropical quartic surface is:
$$\begin{array}{ccccc} &  & 1 &  &  \\ & 0 &  & 0 &  \\1 &  & 20 &  & 1 \\  & 0 &  & 0 &   \\  &  & 1 &  & \end{array}$$
so the wave map sends a $\Z$-module of  rank $20$ to a $1$-dimensional real vector space. 
The \emph{Picard rank} of smooth tropical quartic surface $X$ is the rank of $\Pic(X)$. 
Since $\HH^{0,1}(X, \Z) = 0$ for a smooth tropical quartic  surface $X$ the map $c_1 \colon \Pic(X) \to \HH^{1, 1}(X, \Z)$ is injective by the long exact sequence obtained from (\ref{exponential sequence}). Hence the Picard rank is equal to the rank of the kernel of the wave homomorphism by Proposition \ref{boundary and wave}.

\begin{example}[A tropical surface with Picard rank $19$]\label{ex:pic19}{\rm
A tropical hypersurface $X$ with Newton polytope $n+1$-simplex of size $d$  is floor decomposed
if the relative interior of every top dimensional polytope in the  dual subdivision of $X$ 
does not intersect the hyperplanes
$\{x_{n+1} = i\}$ for all $0\leq i \leq d-1\}$. 
For examples see \cite{Shaw:11Homol}. 

A floor decomposed tropical quartic  surface is determined (up to choice of constants regulating the height of the floors) 
by a collection of  non-singular planar tropical curves $C_1, C_2, C_3, C_4$ 
where each $C_d$ is of degree $d$ (i.e.~dual to a primitive triangulation of the size $d$ lattice triangle). 

\begin{figure}
\begin{center}
\includegraphics[scale = 0.4]{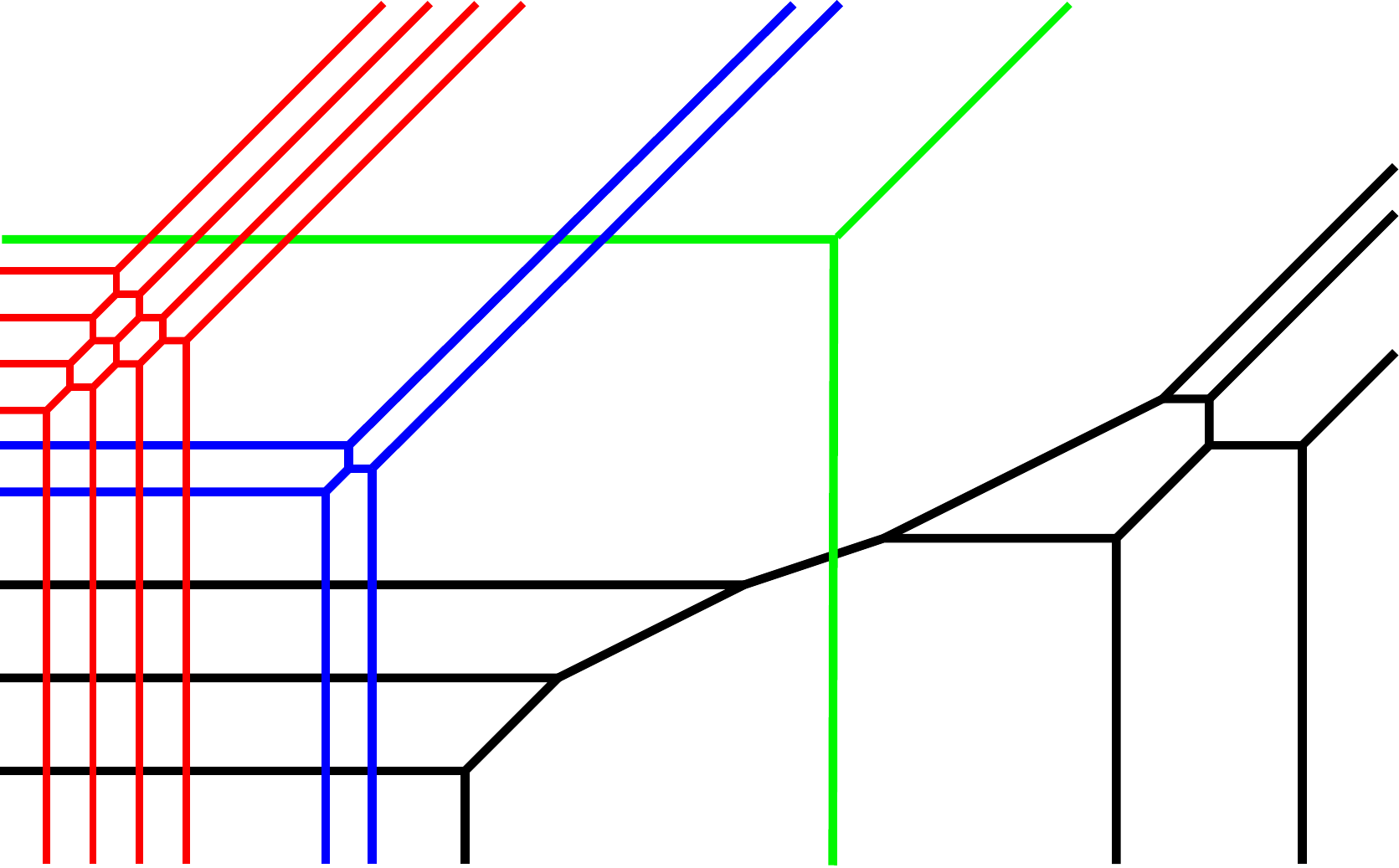}
\end{center}
\caption{The floor diagram of a smooth tropical quartic  surface with  $\text{rank}(\HH^{1, 1}(X, \Z) \cap \ker(\phi)) \geq 18$. }

\vspace{-6cm}
\phantomsection\label{fig:floor}

\vspace{6cm}
\end{figure}

\begin{figure}
\begin{center}
\includegraphics[scale=0.5]{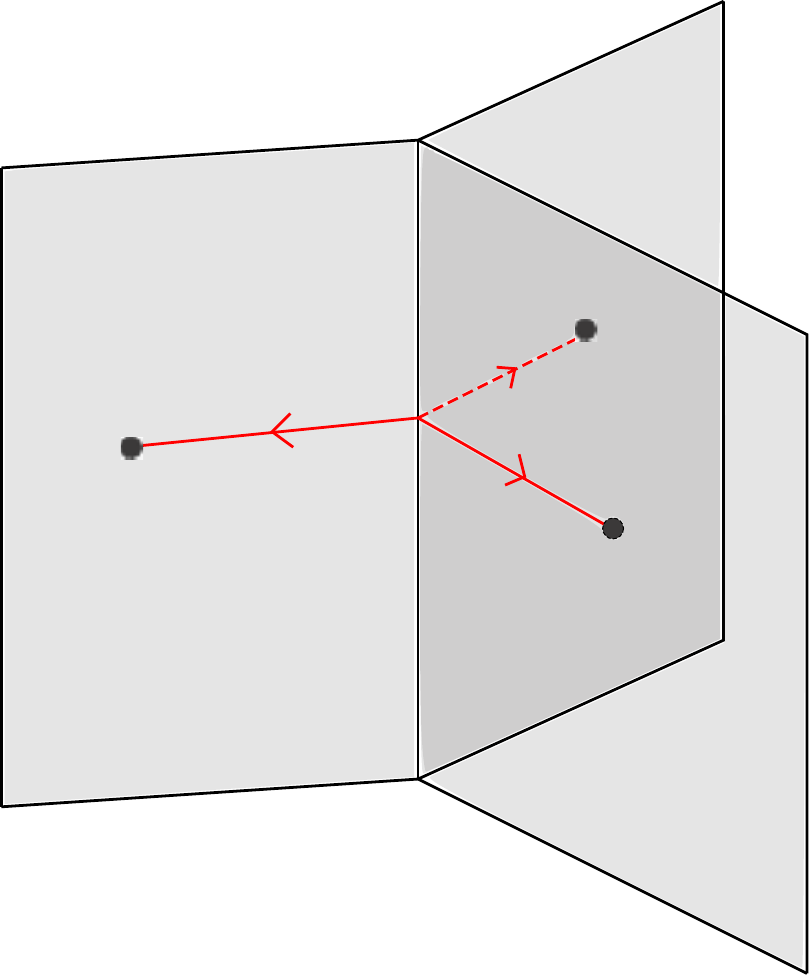}
\put(-95,65){$\tau_1$}
\put(-35,50){$\tau_2$}
\put(-45,98){$\tau_3$}
\put(-110,30){$\sigma_1$}
\put(-15,10){$\sigma_2$}
\put(-25,125){$\sigma_3$}
\put(-108,75){\textcolor{red}{$\uparrow$}}
\put(-115,70){\textcolor{red}{$\leftarrow$}}
\put(-25,55){\textcolor{red}{$\uparrow$}}
\put(-23,50){\textcolor{red}{$\rightarrow$}}
\put(-27,90){\textcolor{red}{$\uparrow$}}
\put(-25,85){\textcolor{red}{$\rightarrow$}}
\hspace{2.5cm}
\includegraphics[scale=0.35]{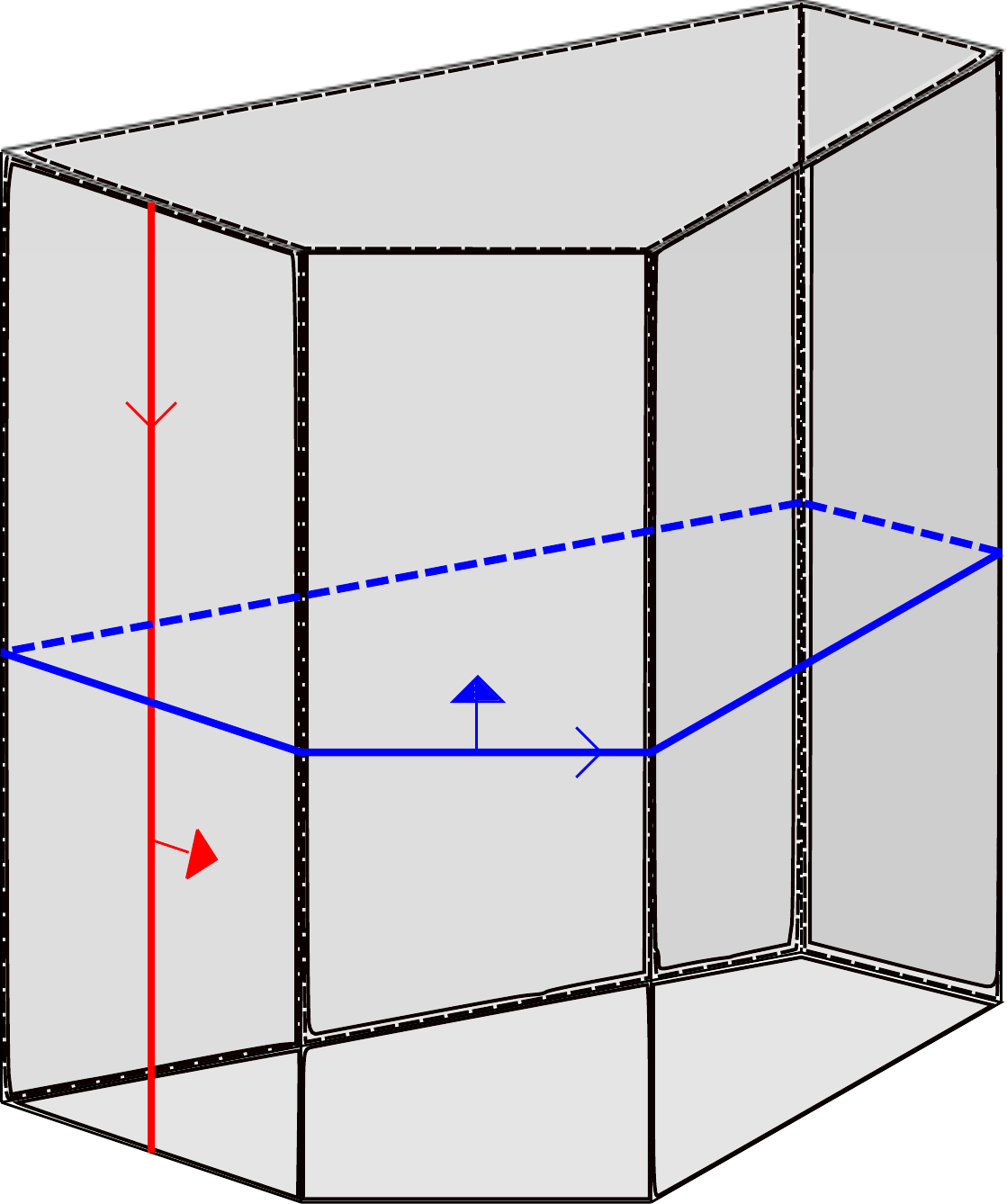}
\put(0, 0){$P$}
\put(-55, 60){$\gamma$}
\put(-105, 40){$\beta$}
\end{center}
\caption{On the left the branched path which is the support of a $(2, 1)$-cycle whose boundary is $\tau_1 + \tau_2 + \tau_3$. 
On the right a depiction of the polytope $P$ and  the cycles $\beta$ and $\gamma$ from Example \ref{ex:pic19}. }

\vspace{-7cm} \phantomsection   \label{fig:pentaprism}

\vspace{7cm}
\end{figure}

Given a floor decomposed surface $X$  a basis of its $\HH_{1, 1}(X, \Z)$ tropical homology 
was described in \cite{Shaw:11Homol}. 
On a floor given by the curves $C_{i}$ and $C_{i+1}$, there are $i(i+1) -1$
independent ``floor cycles". 
This produces $11 + 5 + 1 = 17$ independent $(1, 1)$-cycles. 
They can be chosen such that their support, after projecting to the plane,
forms a minimal loop in $C_{i} \cup C_{i+1}$ not contained in $C_{i}$ or $C_{i+1}$.
By our particular choice of curves $C_1, \dots, C_4$, any such cycle is disjoint from the cycle of $C_3$.
Hence we can assume the floor cycles do not intersect the polytopal sphere. 
Additionally, there is a cycle $h$, a multiple of which is the hyperplane section, 
which can also be made disjoint from $P$.
Together with the cycles $\alpha$ and $\beta$ which are illustrated on the right hand side of Figure \ref{fig:pentaprism}  and 
completely described in \cite{Shaw:11Homol}, these cycles form a basis of $\HH_{1,1}(X, \Z)$.

We now describe how $(2,0)$ cells behave when passing to homology depending on their supporting point.
Orient each face of $P$ so that it is the boundary of the $3$-dimensional polytope. 
At any edge $\gamma$ of $X$ of sedentarity $\emptyset$ there are three faces adjacent to it: 
$\sigma_1, \sigma_2, $ and $\sigma_3$. 
If $\tau_i $ is an appropriately oriented generator of $\mathbf{F}_2^\Z(\sigma_i)$ 
we can find a $(2, 1)$-cell whose support is the branched path on the left of Figure \ref{fig:pentaprism} and 
whose boundary is  $\tau_1 + \tau_2 +  \tau_3$.  
Moreover, for  any point $x$ not on the polytopal sphere we can find a branched path in $X$ 
whose endpoints are $x$ and points of positive sedentarity, thus showing that a $(2, 0)$-cycle supported on $x$ is homologous to $0$. 
In addition, for two faces $\sigma_1$, $\sigma_2$ of the polytopal sphere, we have $\tau_1 \sim \tau_2$ 
where $\tau_i$ are appropriately oriented generators of $\mathbf{F}_2^\Z(\sigma_i)$ and 
such an $\tau_i$ generates $\HH_{2,0}(X, \Z)$.  
We denote the class of these $\tau_i$ by $\tau$.

This implies that $\hat{\phi}(h) = \hat{\phi}(\gamma) =0$ for all floor cycles $\gamma$. 
Moreover, using this description of $\HH_{2,0}(X, \R)$ we can explicitly compute
$\hat{\phi}(\alpha) = l \tau$ and $\hat{\phi}(\alpha) = h \tau$, where
$l$ is the lattice length of the unique cycle in $C_3$
(i.e., the $j$-invariant of $C_3$)
and $k$ is the lattice height 
of the pentagonal prism $P$
(or, the distance between the floors connected by $C_3$). 
Varying the coefficients of the defining tropical polynomial, these two parameters can be
controlled independently. In particular, we can arrange both
$\frac{l}{k} \in \mathbb{Q}$ and $\frac{l}{k} \notin \mathbb{Q}$.

We conclude that $\Ker(\phihat)$ has rank $19$ or $18$, depending on our choice and thus 
any tropical quartic surface with the combinatorial type of the one chosen above must have Picard rank equal to $18$ or $19$
by Theorem \ref{Theorem III intro}.}
\end{example}

\begin{example}[A smooth tropical quartic surface with Picard rank 1]\label{ex:pic1}{\rm
The second example is dual to a cone triangulation. 
Fix a primitive regular triangulation of each of the four two dimensional faces of the size $4$ tetrahedron. 
We obtain a unique primitive regular triangulation by considering the cone over this triangulation 
with the cone point being the unique interior lattice point $(1, 1, 1)$ of the size $4$ simplex. 
See Figure \ref{fig:conek3} for an example. 

\begin{figure}
\begin{center}
\includegraphics[scale=0.3]{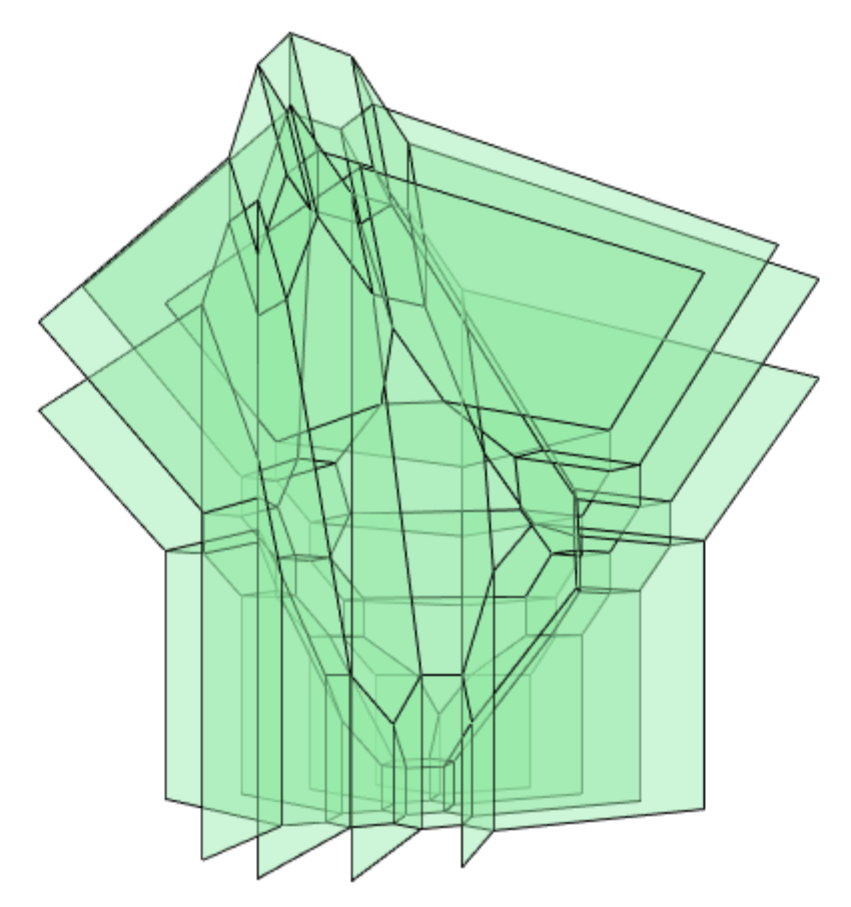}\hspace{1cm}
\includegraphics[scale=0.3]{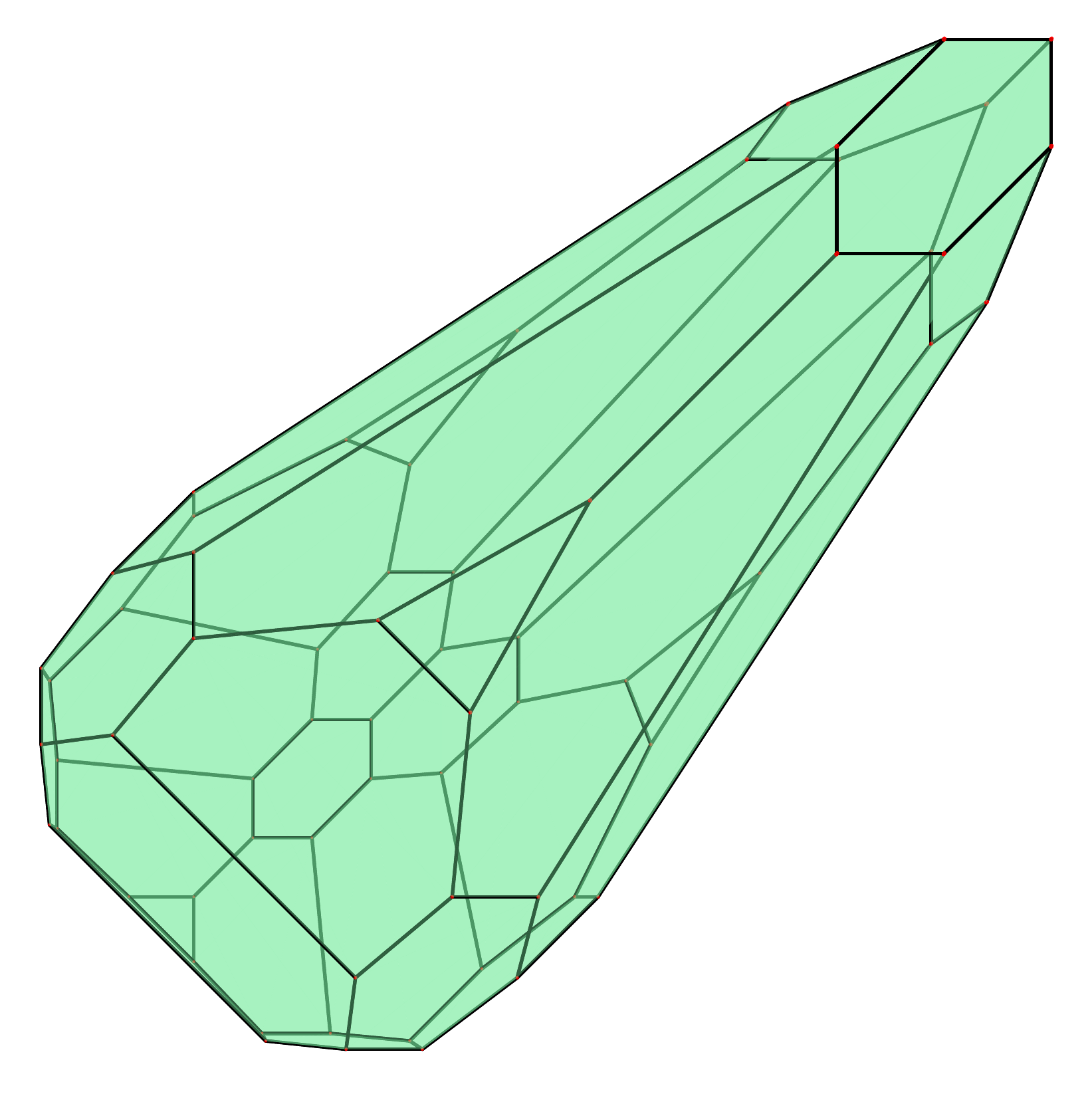}
\end{center}
\caption{A  tropical quartic  surface and its bounded polytope from Example \ref{ex:pic1}.
}

\vspace{-6.5cm}
\phantomsection
\label{fig:conek3}

\vspace{6.5cm}
\end{figure}

In this case all $34$ bounded $2$-dimensional faces of $X$ are faces of the polytopal sphere $P$ contained in $X$. 
Each such face corresponds to a unique lattice point on the boundary of the tetrahedron. 
There are $3$ types of such points: the $4$ vertices of the tetrahedron, 
the $3 \times 6 = 18$ lattice points on the edges of the tetrahedron, 
and the  $3\times 4 = 12$ lattice points contained in the relative interior a  $2$-dimensional  face of the tetrahedron. 
Any $(2,0)$-cycle whose support is not contained on the polytopal sphere $P$ is homologous to zero 
since its support is then contained in unbounded faces. 
Orient each two dimensional face of $P$ so that the collection of faces form the boundary of the bounded $3$-dimensional polytope 
in the complement of $X$ in $\R^3$. 
As in the previous example, equipping any $p \in \sigma \subset P$ with the unique  generator of
$\mathbf{F}_2^\ZZ(\sigma)$ oriented coherently with respect to $\sigma$, 
we obtain representatives of the (same) generator $\tau \in \HH_{2,0}(X, \Z)$.

Each  bounded $2$-dimensional face $\sigma$ 
provides a $(1,1)$-cycle by taking its boundary and equipping each with a coefficient in  $\BF_1^{\ZZ}$
which is a vector generating $\Z^3$ together with the lattice parallel to  $\sigma$.
 Each bounded two dimensional face of $X$ corresponds to a lattice point on the boundary of $\Delta$. 
We denote such a cycle by $\alpha_a$ where $a$ is the corresponding lattice point in $\partial \Delta$. 
If $a$ is in the interior of a two dimensional face of $\partial \Delta$, 
then $\alpha_{a} = 0$ in homology. This leaves $22$ such $(1,1)$-cycles.
Suppose the defining polynomial of $X$ is $f(x) = ``\sum_{a \in \Delta} c_a x^a"$.
Up  to sign we have
$\phihat(\alpha_a)  = w_{a} \tau $ where, 
\begin{itemize}
\item[\rm (1)] if  $a$ in the relative interior of an edge of $\Delta$ with primitive integer direction $v$, then 
\begin{align*}
w_a = c_{a +v} - c_{a-v}; 
\end{align*}
\item[\rm (2)] if $a$ is a vertex of $\Delta$, let $a_1, a_2, a_3$ denote the three lattice points 
in the relative interiors of edges of $\Delta$ which are of lattice distance one  away from $a$. Then 
\begin{align*}
w_{a} = c_{a_1} + c_{a_2} + c_{a_3} - 3c_{(1,1,1)}.
\end{align*}
\end{itemize}
Let $W \subset \HH_{1, 1}(X, \QQ)$ denote the subspace spanned by the $22$ cycles.
It turns out that $\dim(W) = 19$, and that $W$, together with the hyperplane section $h$, generate $\HH_{1, 1}(X, \QQ)$. 
By choosing a basis for $W$ among the $\alpha_a$'s, we can identify $\Hom_{\QQ}(W, \R)$ with $\R^{19}$. 
Let $V_\text{coef} = \R^{\Delta \cap \Z^3}$ be the vector space of
polynomials $f = (c_a)_a$ and let $C \subset V_\text{coef}$ denote the cone 
of coefficients of tropical polynomials of the fixed  combinatorial type.
Then $\phihat$ induces a linear map  $w \colon V_\text{coef} \to \R^{19}$ 
which is explicitly given by formulas (1) and (2). It can be checked that $s$ has full rank and hence $w(C) \subset \R^{19}$
has non-empty interior. 
For any $1 \leq r \leq 19$, let $Y_r \subset \R^{19}$ be the subset of vectors
whose entries span a $20-r$-dimensional $\QQ$-subspace of $\R$. 
Since $Y_r$ is dense for any $r$, there exists a tropical polynomial  $f \in C$ with 
$w(f) \in Y_r$. Such an $f$ describes a tropical surface $X$ with Picard rank equal to $r$. 
In particular, we can produce a tropical surface of Picard rank equal to $1$.
This proves Theorem \ref{Prop Intro} from the introduction. }
\end{example}

\providecommand{\bysame}{\leavevmode\hbox to3em{\hrulefill}\thinspace}
%
%

\bibliographystyle{amsalpha}
\bibliographymark{References}
\def\cprime{$'$}

\end{document}